\newcommand{\N}{\mathbb{N}}
\newcommand{\R}{\mathbb{R}}
\newcommand\1{{\mathbf 1}}
\newcommand{\eps}{\varepsilon}
\newcommand{\conv}[1]{\convexh\left( #1\right)}
\DeclareMathOperator*{\argmax}{arg\,max} 
\DeclareMathOperator*{\convexh}{conv}
\newtheorem{theorem}{Theorem}
\newtheorem{lemma}[theorem]{Lemma}
\newtheorem{corollary}[theorem]{Corollary}
\begin{document}

\title{Data-Driven Robust Optimization\\using Unsupervised Deep Learning}

\author[1]{Marc Goerigk\footnote{marc.goerigk@uni-siegen.de, corresponding author, supported by the Deutsche Forschungsgemeinschaft (DFG) through grant GO 2069/1-1}}
\author[2]{Jannis Kurtz\footnote{jannis.kurtz@uni-siegen.de}}

\affil[1]{Network and Data Science Management, University of Siegen, Germany }	
\affil[2]{Management Science, University of Siegen, Germany }

\date{}

\maketitle

\begin{abstract}
Robust optimization has been established as a leading methodology to approach decision problems under uncertainty. To derive a robust optimization model, a central ingredient is to identify a suitable model for uncertainty, which is called the uncertainty set. An ongoing challenge in the recent literature is to derive uncertainty sets from given historical data that result in solutions that are robust regarding future scenarios. In this paper we use an unsupervised deep learning method to learn and extract hidden structures from data, leading to non-convex uncertainty sets and better robust solutions. We prove that most of the classical uncertainty classes are special cases of our derived sets and that optimizing over them is strongly NP-hard. Nevertheless, we show that the trained neural networks can be integrated into a robust optimization model by formulating the adversarial problem as a convex quadratic mixed-integer program. This allows us to derive robust solutions through an iterative scenario generation process. In our computational experiments, we compare this approach to a similar approach using kernel-based support vector clustering. We find that uncertainty sets derived by the unsupervised deep learning method find a better description of data and lead to robust solutions that  outperform the comparison method both with respect to objective value and feasibility. 
\end{abstract}

\noindent\textbf{Keywords:} Robust optimization; data-driven optimization; unsupervised deep learning

\section{Introduction}
In many real-world optimization problems, many of the observed parameters are uncertain, which can be due to measurement or rounding errors, or since the true value is first revealed in the future. Examples of uncertain parameters can be future demands, unknown traffic situations, noisy data and many more. Therefore, there is a high demand for optimization models which can handle the occurring uncertainties.
In contrast to stochastic programming, the robust optimization approach is used to tackle optimization problems with uncertain parameters by a worst-case approach and needs no information about the underlying probability distribution of the uncertain parameters. More precisely, in robust optimization problems we typically want to find a solution which is feasible under all future outcomes of the uncertain parameters and which is optimal in the worst-case. To this end, the input of robust optimization models is an uncertainty set containing the so-called scenarios. One of the main questions arising in practical applications is which structure and size the uncertainty set should have.

Initialized by the seminal work of Soyster in 1973 \cite{soyster1973}, the field of robust optimization fully emerged in the 1990s and was studied for several classes of uncertainty sets, including finite and convex uncertainty sets \cite{kouvelis,ben2009robust,aissi_minmax_survey}. For convex uncertainty sets, the most frequently used sub-classes are polyhedral, conic and ellipsoidal uncertainty sets \cite{roconv,bentalRobust,el1998robust,el1997robust}. As a special case of polyhedral uncertainty, the so-called budgeted uncertainty is a popular uncertainty class due to its simplicity and tractability \cite{BertsimasS04,bertsimas2003combinatorial}. Different uncertainty sets and their geometric relationship are studied in \cite{li2011comparative}. Recently mixed uncertainty sets, combining most of the popular uncertainty classes into one set, were studied in \cite{dokka2020mixed}. Next to the classical robust optimization approach, several less conservative approaches have been introduced. A survey about the classical and more recent robust optimization approaches for discrete and convex uncertainty can be found in \cite{buchheimrobust}.

As mentioned above, one of the main questions for a user applying a robust optimization model is how to choose the structure and the size of the uncertainty set. In most real-world situations the user only has a finite set of observations of the uncertain parameters from the past, possibly containing corrupted scenarios or outliers. Constructing a finite uncertainty set containing all observations can lead to overly pessimistic robust solutions due to outliers and since no structural properties of the data are exploited. Furthermore, the robust optimization approach with finite uncertainty sets is known to be hard to solve, even for easy classical combinatorial optimization problems with only two scenarios \cite{kouvelis}. To tackle the latter problems, several data-driven robust optimization approaches have been studied. Uncertainty sets can be constructed by statistical methods using non-parametric estimators to model confidence regions \cite{alexeenko2020nonparametric}, hypothesis tests \cite{bertsimas2018data}, $\phi$-divergence \cite{yanikouglu2013safe} or statistical learning theory \cite{tulabandhula2014robust}. In \cite{hong2017learning} the authors approximate a high probability region by combinations of classical uncertainty sets and then use a data-splitting scheme to determine the size of the region. In \cite{campbell2015bayesian,ning2017data} Dirichlet process mixture models are used to construct uncertainty sets which are given by a union of ellipsoids. Furthermore, in \cite{ning2017data} computationally tractable uncertainty sets with a polyhedral structure are constructed and applied to adaptive robust optimization problems. Polyhedral uncertainty sets incorporating correlations between uncertain parameters are derived in \cite{ning2018data} using principal component analysis and kernel smoothing. In the field of data-driven distributional robustness the idea is to construct a distributional ambiguity set around an empirical distribution, often defined by the Wasserstein metric or $\phi$-divergence, containing all probability distribution to be considered. The task is to find a solution which performs best under the worst-case distribution in the ambiguity set; see e.g. \cite{esfahani2018data,saif2021data,bertsimas2019adaptive,wiesemann2014distributionally,goh2010distributionally}. 
In \cite{ben2009robust,BertsimasS04,bertsimas2019probabilistic} probabilistic performance guarantees of classical and tractable uncertainty sets were studied, i.e. under the assumption that the uncertain parameters follow a probability distribution with certain properties, the probability of a robust solution being infeasible can be controlled. Interestingly, these performance guarantees even hold for artificially constructed probability distributions whose support is not contained in the uncertainty set.

While the goal of most of the latter approaches is the construction of uncertainty or ambiguity sets leading to good solutions with a probabilistic performance guarantee, another line of research is concerned with finding solutions which are feasible for all possible future scenarios, avoiding probabilistic assumptions on the data. On the one hand, deterministic approaches were developed to construct a range of uncertainty sets which were applied to the shortest path problem under real-world traffic scenarios of the City of Chicago \cite{chassein2019algorithms}. In \cite{cheramin2021data} principal component analysis is used to derive polyhedral uncertainty sets which are computationally cheaper than using the convex hull of the observed scenarios. On the other hand, unsupervised machine learning models were deployed to learn and extract information from data, detect anomalies and to construct uncertainty sets which better describe the structure of the data. Clustering and supervised machine-learning approaches were used in \cite{garuba2020comparison}. In \cite{shang2017data,shang2019data,shen2020large} the authors derive a kernel-based support vector clustering model to construct polyhedral uncertainty sets.

In this work we apply and adapt the unsupervised deep classification model developed in \cite{ruff2018deep} to construct uncertainty sets which are given by level sets of a norm-function applied to the output of deep neural networks. Deep learning is widely used to discover and exploit unknown structure in data \cite{lecun2015deep,bengio2012deep}. Due to the high expressivity of neural networks, the derived sets have a more complex structure compared to other methods, which can flexibly be adjusted by varying the architecture of the underlying neural network. Furthermore, the one-class deep learning method in \cite{ruff2018deep} succeeded in detecting anomalies in data. The basic idea of our approach is to train a neural network which can distinguish a true scenario from a corrupted or unrealistic scenario with high accuracy. The constructed uncertainty set then contains all scenarios which are classified by the neural network as true scenarios.  By scaling the radius of our derived set, we can additionally control the conservativeness of the robust solution. 

While this work was under review, our method was applied to a real-world refinery planning problem under uncertainty in \cite{wang2021deep}. The results reported in that paper give further evidence that our method performs well in practice.


\textit{Our contributions are as follows:}
\begin{itemize}
\item We apply the unsupervised deep classification model developed in \cite{ruff2018deep} to create uncertainty sets for robust optimization problems.

\item We show that the constructed sets are given by a finite union of convex subsets, each of them having a polyhedral structure intersected with a transformed norm inequality.

\item We prove that most of the classical uncertainty classes considered in the literature are special cases of our derived sets and that optimizing over our set is strongly NP-hard.

\item We show that it is possible to optimize over our sets using a mixed-integer programming formulation, which means that the robust optimization problem can be solved by an iterative scenario generation procedure.

\item We test our method on randomly generated data and on realistic traffic data and compare it to the kernel-based support vector clustering sets from \cite{shang2017data} and to the convex hull of the historical data points. Our experiments show that solutions calculated by our method often outperform both other methods.
\end{itemize}

\section{Preliminaries}\label{sec:preliminaries}
\subsection{Notation}
We define $[k]:=\left\{ 1,\ldots ,k\right\}$ for each $k\in \N$ and $\R_+^N:=\left\{ x\in\R^N: x\ge 0\right\}$. The $\ell_p$-norm of a vector $x\in\R^N$ is defined by $\|x\|_p:=\left(\sum_{i\in[N]}x_i^p\right)^{\frac{1}{p}}$. For a matrix $A\in\R^{m\times n}$ we denote by $a_i$ the $i$-th row and by $A_j$ the $j$-th column of $A$. For a vector $v\in\R^N$ we denote by $\text{diag}(v)$ the $N\times N$ matrix which has diagonal entries $v$ and zero-entries otherwise. For given matrices $W^1,\ldots ,W^L$ of appropriate dimensions we define the product
\[
\prod_{i=1}^{L} W^i := W^LW^{L-1}\cdots W^1 .
\]
The identity matrix in dimension $n$ is denoted by $E_n$.

\subsection{Robust Optimization}
Consider the deterministic linear optimization problem
\begin{equation}\label{eq:deterministicProblem}
\begin{aligned}
\min \ & d^\top x \\ 
s.t. \quad & c^\top x \le b \\
& x \in X 
\end{aligned}
\end{equation}
where $d\in\R^N$ is a given cost vector, $X = \{ x \in \R^N : Tx \le e \}$ a polyhedron and $c\in \R^N$, $b \in \R$.
Assume that the coefficient parameters of the constraint $c^\top x \le b$ are uncertain, i.e. the vector $c$ is not known precisely. In robust optimization we assume that an uncertainty set $U\subset\R^N$ is given, which contains all possible realizations of the vector $c$. The aim is then to calculate an optimal solution which is feasible for each realization in $U$, i.e. we want to solve the problem
\begin{equation}\label{eq:robustProblem}
\begin{aligned}
\min \ & d^\top x \\ 
s.t. \quad & c^\top x\le b \quad \forall c\in U\\
& x\in X
\end{aligned}
\end{equation}
which is equivalent to the problem
\begin{equation}\label{eq:robustProblem2}
\begin{aligned}
\min \ & d^\top x \\ 
s.t. \quad & \max_{c\in U} c^\top x\le b\\
& x\in X.
\end{aligned}
\end{equation}
If more than one constraint is uncertain, a similar reformulation can be applied for each such constraint.
For certain classes of convex uncertainty sets $U$, replacing $\max_{c\in U} c^\top x$ by its dual formulation, Problem \eqref{eq:robustProblem2} can be transformed to a deterministic problem of a certain class \cite{ben2009robust}. If $U$ is a polyhedral uncertainty set, \eqref{eq:robustProblem2} is equivalent to a linear program, while for ellipsoidal uncertainty sets it becomes a second-order cone problem.

Alternatively, Problem \eqref{eq:robustProblem} can be solved by iteratively generating new worst-case scenarios in $U$ and adding them to the problem. More precisely, we alternately calculate an optimal solution $x^*\in \{ x\in X : \max_{c \in U'} c^\top x \le b \}$ of Problem~\eqref{eq:robustProblem} for a finite subset $U'\subset U$ and afterwards a worst-case scenario 
\begin{equation}\label{eq:adversarialProblem}
c^* \in \argmax_{c\in U} c^\top x^*
\end{equation}
which is then added to $U'$ if $(c^*)^\top x^* > b$ holds. Otherwise, we stop with an optimal solution $x^*$.

Sometimes the special case of the robust optimization problem is studied where the uncertain parameters only appear in the objective function, which can be modeled by
\begin{equation}\label{eq:robustProblemObjectiveUncertainty}
\min_{x\in X} \max_{c\in U} c^\top x . 
\end{equation}
Nevertheless, all results from above also hold for Problem \eqref{eq:robustProblemObjectiveUncertainty}.
Finally, note that we can equivalently replace $U$ by its convex hull $\conv{U}$ in Problems~\eqref{eq:robustProblem} and \eqref{eq:robustProblemObjectiveUncertainty}, since we are optimizing a linear function over $U$. 


\subsection{Unsupervised Learning Methods}
\label{sec:unsuplearn}

In this work we consider a subclass of unsupervised learning models, sometimes called anomaly detection or one-class classification. Here the task is to decide whether a given data point is a normal data point or if it is anomalous. To this end the aim is to train an appropriate model on a set of unlabeled training data, which is assumed to contain the normal data points, and extract structural information from this training set to distinguish normal data from anomalous data in the future. Often the idea behind one-class classification models is to find a minimal norm-ball in a certain feature space, such that all anomalous data points are lying outside of the ball. In the following we summarize two approaches, the first based on support vector clustering and the second using deep neural networks.

Given a set of data points $c^1,\ldots ,c^m\in \R^N$ and a mapping $\phi: \R^N\to \R^K$ to a possibly high-dimensional feature space, the idea of soft margin support vector clustering (SVC) is to find the smallest sphere that encloses most of the training data, which can be done by solving the problem
\begin{equation}\label{eq:SVC}
\begin{aligned}
\min \ & R^2 + \frac{1}{N\nu}\sum_{i=1}^{m}\xi_i \\
s.t. \quad & \| \phi(c^i)-\bar c\|_2^2\le R^2 + \xi_i \quad i=1,\ldots ,m \\
& \bar c\in\R^K, \ R\ge 0, \ \xi\in \R_+^m .
\end{aligned}
\end{equation}
Note that the mapping $\phi(c^i)$ of a data point can lie outside of the sphere around $\bar c$ with radius $R$ in which case $\xi_i>0$ in an optimal solution. The distance $\xi_i$ of each point outside of the sphere is penalized in the objective function and the parameter $\nu\in [0,1]$ can be used to adjust the fraction of data points which will lie outside of the optimal sphere. Problem \eqref{eq:SVC} is a convex problem and by applying the KKT conditions we obtain its dual problem
\begin{equation}\label{eq:SVCdual}
\begin{aligned}
\min_{\alpha} \ & \sum_{i,j=1}^{m}\alpha_i\alpha_jK(c^i,c^j)-\sum_{i=1}^{m}\alpha_iK(c^i,c^i) \\
s.t. \quad & 0\le \alpha_i\le \frac{1}{N\nu} \quad i\in [m] \\
& \sum_{i\in[m]}\alpha_i = 1 ,
\end{aligned}
\end{equation}
where $K(\cdot,\cdot )$ is a kernel function and $K(c^i,c^j)=\phi(c^i)^\top\phi(c^j)$. If $K$ is a positive definite kernel, Problem \eqref{eq:SVCdual} is a convex quadratic problem and can be solved by classical QP methods \cite{boyd2004convex} or due to its specific structure by the sequential minimal optimization procedure \cite{zeng2008fast}.

The main idea in \cite{shang2017data} is to construct uncertainty sets 
\[
U = \left\{ c\in \R^N: \|\phi(c)-\bar c\|_2^2\le R^2\right\} ,
\]
where $\bar c$ and $R$ are given by an optimal solution of \eqref{eq:SVC}. The authors apply the kernel
\[
K(u,v):=\sum_{i\in[m]}l_i - \|Q(u-v)\|_1
\]
where $Q$ is a weighting matrix containing covariance information from data and the $l_i$ are specified values which were chosen such that the kernel $K$ is positive definite. They derive the uncertainty set
\begin{equation}\label{eq:uncertaintysetSVC}
\begin{aligned}
 U_\nu=\{ c\in\R^N: & \ \exists v_i\in\R^N \quad \forall i\in \text{SV} \\ & \sum_{i\in\text{SV}}\alpha_i v_i^\top \1 \le \min_{i'\in \text{BSV}}\sum_{i\in\text{SV}}\alpha_i \|Q(c^{i'}-c^i) \|_1 \\ 
 & -v_i \le Q(c-c^i)\le v_i \quad \forall i\in \text{SV}\}   
 \end{aligned}
\end{equation}
where $\alpha$ is the optimal solution of Problem \eqref{eq:SVCdual}, $\text{SV}:=\{ i: \alpha_i>0\}$ is the set of support vectors and $\text{BSV}:=\{ i: \frac{1}{N\nu}>\alpha_i>0\}$ is the set of boundary support vectors. Geometrically, SV contains all indices of the data points which lie outside or on the boundary of the sphere, while BSV contains only the data points on the boundary. Note that the number of variables in the description of $U_\nu$ depends on the number of support vectors and therefore grows with increasing sample size $m$ and decreasing $\nu$.

In \cite{ruff2018deep} the authors study Problem \eqref{eq:SVC} where $\phi(c)$ is replaced by the output of a neural network for data point $c$. A neural network is a function $f_{W^1,\ldots ,W^L}:\R^N\to \R^{d_L}$ which maps a data point $c\in\R^N$ to an output vector \[\mathop{f_{W^1,\ldots ,W^L}(c)=\sigma^L\left( W^L\sigma^{L-1}\left( W^{L-1}\ldots \sigma^1\left(W^1c\right)\ldots \right)\right)}\] where $W^l\in \R^{d_{l}\times d_{l-1}}$ are the weight matrices, and $\sigma^l:\R\to \R$ for each $l\in[L]$ are activation functions which are applied component-wise. The dimension $d_{l}$ is called the \textit{width} of the \textit{$l$-th layer} with $d_0 := N$. We define $W:=(W^1 ,\ldots ,W^L)$. The authors present a model called \textit{One-Class Deep Support Vector Data Description} (Deep SVDD), which is given by the problem 
\begin{equation}\label{eq:DeepSVDD}
\min_{W^1,\ldots ,W^L} \ \frac{1}{m}\sum_{i\in [m]} \|f_{W^1,\ldots ,W^L}(c^i)-\bar c\|_2^2 + \frac{\lambda}{2}\sum_{l\in[L]}\| W^l\|_F^2 
\end{equation}
where $\bar c$ is a given center point, $\lambda\ge 0$ a given control parameter and $\|\cdot\|_F$ denotes the Frobenius norm. The term $\frac{\lambda}{2}\sum_{l\in[L]}\| W^l\|_F^2$ is a regularizer which leads to smaller weights after training and which can be controlled by the parameter $\lambda$. After optimizing Problem \eqref{eq:DeepSVDD} the radius $R^2>0$ to control the size of the sphere. A new data point $c\in\R^N$ is then classified as a normal scenario if $\|f_{W^1,\ldots ,W^L}(c)-\bar c\|_2\le R$. We will use this model in the next section to create uncertainty sets which can be used for robust optimization problems. 

Note that the center point $\bar c$ is fixed and not part of the decision variables. This is because in the latter, case a trivial optimal solution would be to set all network weights and the center to $0$, provided that the activation functions do not have a bias term, and then all points in $\R^N$ would be mapped to the center. In \cite{ruff2018deep} the authors mention that a good strategy is to set the center point $\bar c$  to the average of the network representations that result from performing an initial forwardpass on some training data sample. Another drawback is that bias terms in the network weights or in the activation functions can lead to useless solutions, since e.g. setting all weights in the first layer matrix $W^1$ to $0$ and the bias vector of the first layer to $\bar c$ yields a network which maps all points in $\R^N$ to the center and hence does not extract any information from the data. To tackle this problem in \cite{chong2020simple} two regularizers are presented, one based on injecting random noise via the standard cross-entropy loss, and one which penalizes the minibatch variance when it becomes too small. By adding one of these regularizers to the loss function in \eqref{eq:DeepSVDD} the mode collapse problem is avoided and bias terms can be used.

\section{Creating Uncertainty Sets via Unsupervised Deep Learning}\label{sec:theoreticalSection}

\subsection{Definition and Properties of Uncertainty Sets}

In this section we derive non-convex uncertainty sets using level sets of trained neural networks, which are given as described in Section~\ref{sec:unsuplearn}. We assume that all activation functions are continuous piecewise affine functions, i.e. they are of the form
\begin{equation}\label{eq:activationFunction}
    \sigma^l(w):= \alpha_i^l w + \gamma_i^l \ \text{ if } \ \underline{\beta}^{i,l}\le w\le\overline{\beta}^{i,l} \ \forall i\in [k_l]
\end{equation}
for all $l\in [L]$, where $\alpha_i^l\in\R$ are the given slopes, $\gamma_i^l$ are the given $y$-intercepts,  $k_l\in\N$ is the number of intervals and $\underline{\beta}^{i,l}< \overline{\beta}^{i,l}$ are the bounds of the intervals where $\overline{\beta}^{i,l}=\underline{\beta}^{i+1,l}$, $\underline{\beta}^{1,l}=-\infty$ and $\overline{\beta}^{k_l,l}=\infty$ for all $l\in[L]$. 
If all possible data points are contained in a bounded set and if the neural network is already trained, then we can replace $\infty$ by a large enough value $M>0$. Note that the ReLU activation function $\sigma(w)=\max\{ w,0\}$ can be modeled by \eqref{eq:activationFunction} setting $k_l=2$, $\alpha_1^l = 0$, $\alpha_2^l=1$, $\gamma_1^l=\gamma_2^l = 0$, $\underline{\beta}^{1,l}=-\infty$, $\overline{\beta}^{1,l}=0$, $\underline{\beta}^{2,l}=0$ and $\overline{\beta}^{2,l}=\infty$. Other piecewise affine activation functions as the Hardtanh or the hard sigmoid function can be modeled by \eqref{eq:activationFunction} as well. In general, any continuous function could be approximated by these piecewise affine functions.

For a neural network, trained on the training sample $c^1,\ldots ,c^m\in\R^N$, and given by its weight matrices $W=(W^1,\ldots , W^L)$, a center point $\bar c\in\R^{d_L}$ and a radius $R>0$, we define the uncertainty set
\begin{equation}\label{eq:NNUncertaintySet}
    U_f(W,\bar c,R):=\{c\in\R^N \ : \ \|f_{W^1,\ldots ,W^L}(c)-\bar c\|\le R\}
\end{equation}
for an arbitrary norm $\|\cdot\|$. Using the Deep SVDD method defined in \eqref{eq:DeepSVDD} to train the neural network, the natural choice of $U_f(W,\bar c,R)$ would be given by using the Euclidean norm and the center point $\bar c$ returned by the model.
All of the following results hold if we allow bias terms in the network architecture. In this case each layer is given of the form $W^iy + b^i$ where $b^i$ is the trained bias term of layer $i$.

In the following we define
\begin{equation}
    \mathcal A :=\left\{ u:=(u^{i,l})_{l\in[L], i \in [k_l]} : u^{i,l}\in\{0,1\}^{d_l}, \sum_{i\in [k_l]} u^{i,l}=\1\right\} .
\end{equation}
The idea is that the vectors $u$ encode the activation decisions of all neurons, i.e. $u_j^{i,l}=1$ if the outcome of the $j$-th component of layer $l$ lies in the interval $[\underline{\beta}^{i,l},\overline{\beta}^{i,l})$ and $0$ otherwise. The constraint $\sum_{i\in [k_l]} u^{i,l}=\1$ ensures that for each neuron exactly one interval is chosen. We call a vector $u\in \mathcal A$ an \textit{activation pattern}.

We can now prove the following theorem which shows that the uncertainty set $U_f(W,\bar c,R)$ is given by a finite union of convex sets, where each convex set is an intersection of a polyhedron with a norm constraint. To this end, we define $\tilde W^1=W^1$, $W^{L+1}=E_{d_L}$, where $E_{d_L}$ is the identity matrix in dimension $d_L$, and
\[\tilde W^l=\left(\prod_{s=1}^{l-1}W^{s+1}\text{diag}(\sum_{i\in [k_{s}]} u^{i,s}\alpha_i^{s})\right)W^1\] for each $l=2,\ldots L+1$. Furthermore, we set $\tilde \gamma^1=0$ and 
\[\tilde \gamma^l=\sum_{s=2}^{l}W^l\left(\prod_{t=s}^{l-1}\text{diag}(\sum_{i\in [k_{t}]} u^{i,t}\alpha_i^{t})W^t\right)\left(\sum_{i\in [k_{s-1}]} u^{i,s-1}\gamma_i^{s-1}\right)\]
for all $l=2,\ldots L+1$.
\begin{theorem}\label{thm:unionOfConvexSets}
Given the weight matrices $W$ of a trained neural network, a center point $\bar c\in\R^{d_L}$ and a radius $R>0$, then it holds that
\begin{equation}
    U_f(W,\bar c,R) = \bigcup_{u\in \mathcal A} P(W,u,\bar c, R)
\end{equation}
with
\begin{align*}
        P(W,u,\bar c, R):=\{ c\in\R^N \ : \ & \tilde W^l c + \tilde \gamma^l < \sum_{i\in [k_l]} u^{i,l}\overline{\beta}^{i,l} \ \forall l\in [L], \\
        & \tilde W^l c + \tilde \gamma^l\ge \sum_{i\in [k_l]} u^{i,l}\underline{\beta}^{i,l} \ \forall l\in [L], \\
        & \|\tilde W^{L+1}c+\tilde \gamma^{L+1}-\bar c\|\le R \}.
\end{align*}
\end{theorem}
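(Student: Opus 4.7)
The plan is to establish the theorem by showing that on each region corresponding to an activation pattern $u \in \mathcal{A}$, the network acts as the single affine map $c \mapsto \tilde W^{L+1} c + \tilde \gamma^{L+1}$, and that the two chains of linear inequalities in $P(W,u,\bar c,R)$ encode precisely that $u$ is the true activation pattern at $c$. The theorem then follows from two inclusions together with one inductive computation.

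The key technical step is a lemma, proved by induction on $l$, stating that under activation pattern $u$ the pre-activation at layer $l$ equals $\tilde W^l c + \tilde \gamma^l$. In the base case $\tilde W^1 = W^1$ and $\tilde \gamma^1 = 0$, matching $z^1 = W^1 c$. For the inductive step I would first verify the recursions
\[
\tilde W^{l+1} = W^{l+1}\,\text{diag}\bigl(\textstyle\sum_{i} u^{i,l} \alpha_i^l\bigr)\,\tilde W^l,
\qquad
\tilde \gamma^{l+1} = W^{l+1}\,\text{diag}\bigl(\textstyle\sum_{i} u^{i,l} \alpha_i^l\bigr)\,\tilde \gamma^l + W^{l+1}\textstyle\sum_i u^{i,l}\gamma_i^l
\]
directly from the explicit formulas for $\tilde W^l$ and $\tilde \gamma^l$. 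Under pattern $u$ the post-activation is $y^l = \text{diag}(\sum_i u^{i,l} \alpha_i^l)\,z^l + \sum_i u^{i,l} \gamma_i^l$, so $z^{l+1} = W^{l+1} y^l$ combines with the inductive hypothesis to yield $z^{l+1} = \tilde W^{l+1} c + \tilde \gamma^{l+1}$. Setting $l = L+1$ with $W^{L+1} = E_{d_L}$ gives $f_{W^1,\ldots,W^L}(c) = \tilde W^{L+1} c + \tilde \gamma^{L+1}$ whenever $u$ is the pattern really realised by $c$.

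With this lemma in hand both inclusions are short. For $\supseteq$, if $c \in P(W,u,\bar c,R)$, the two chains of linear inequalities state that, plugging $c$ into the candidate pre-activation formula $\tilde W^l c + \tilde \gamma^l$, each component lands in the interval selected by $u^{i,l}$; a simple layer-by-layer induction then shows $u$ is consistent with the genuine activations of $c$, so $f_W(c) = \tilde W^{L+1} c + \tilde \gamma^{L+1}$, and the norm constraint in $P$ yields $c \in U_f(W,\bar c,R)$. For $\subseteq$, given $c \in U_f(W,\bar c,R)$, I would read off the activation pattern $u(c)$ layer by layer: compute the true pre-activation $z^l$ and set $u(c)^{i,l}_j = 1$ exactly when $z^l_j \in [\underline\beta^{i,l},\overline\beta^{i,l})$. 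The half-open intervals make this pattern well defined, the polyhedral inequalities then hold by construction, and the norm constraint is satisfied by assumption.

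The main obstacle is notational: the formulas for $\tilde W^l$ and $\tilde \gamma^l$ are nested products that combine the paper's descending product convention with diagonal activation-slope matrices, so the cleanest route is to replace the closed-form sums/products by the two recursions above before carrying out the induction. A minor subtlety is that pre-activations which land exactly on a boundary $\overline \beta^{i,l} = \underline \beta^{i+1,l}$ must be assigned to a unique pattern; this is handled automatically by the strict upper and non-strict lower inequalities in the definition of $P(W,u,\bar c,R)$, which guarantees that the union covers $U_f(W,\bar c,R)$ without ambiguity.
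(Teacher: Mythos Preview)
Your proposal is correct and follows essentially the same approach as the paper: both proofs establish, by induction over the layers, that under a fixed activation pattern $u$ the pre-activation at layer $l$ is the affine map $c\mapsto \tilde W^l c + \tilde\gamma^l$, and then read off the polyhedral constraints and the norm constraint from this representation. Your version is somewhat more explicit---you spell out the recursions for $\tilde W^{l+1}$ and $\tilde\gamma^{l+1}$, argue both inclusions separately, and address the half-open interval convention at boundaries---whereas the paper's proof compresses these into a single direction and an ``inductively we conclude'' step, but the underlying argument is the same.
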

\begin{proof}
It is a well-known fact that neural networks with classical piecewise affine activation functions cluster the data space into polyhedra and apply a different affine function on each of them \cite{rister2017piecewise,montufar2014number,wang2018max,raghu2017expressive}. Nevertheless, we will  derive our result explicitly for the more general piecewise affine activation functions in this proof.

Note that for each data point $c\in\R^N$ applied to the neural network, there exists exactly one activation pattern $u\in \mathcal A$. Consider a fixed activation pattern $u\in \mathcal A$, and for a data point $c\in U_f(W,\bar c,R)$ let $\tilde w^l(c)\in\R^{d_l}$ be the output of layer $l\in [L]$ before applying the activation function. The data point $c$ has activation pattern $u$ if the conditions
\begin{equation}\label{eq:constr_neuron}
\sum_{i\in [k_l]} u^{i,l}\underline{\beta}^{i,l}\le \tilde w^l(c) < \sum_{i\in [k_l]} u^{i,l}\overline{\beta}^{i,l}
\end{equation}
are true for each $l\in [L]$. The output of layer $1$ is given by $W^1c$. Applying the activation function componentwise and afterwards applying $W^2$, the output of layer $2$ is given by
\begin{align*}
\tilde w^2 & = W^2 \left(\text{diag}(\sum_{i\in [k_{1}]} u^{i,1}\alpha_i^{1})W^1c + (\sum_{i\in [k_{1}]} u^{i,1}\gamma_i^{1})\right)\\
&= W^2 \text{diag}(\sum_{i\in [k_{1}]} u^{i,1}\alpha_i^{1})W^1c + W^2(\sum_{i\in [k_{1}]} u^{i,1}\gamma_i^{1}) \\
& = \tilde W^2 c + \tilde \gamma ^2.
\end{align*}
Inductively we conclude that the output of layer $l\in [L]$ is $\tilde W^lc + \tilde \gamma^l$ and together with \eqref{eq:constr_neuron} we obtain the linear inequalities in $P(W,u,\bar c,R)$. Since the activation function is applied to the output of the last layer, applying the identity matrix $W^{L+1}$ afterwards we obtain the output of the neural network by
\[
f_{W^1,\ldots ,W^L}(c)=\tilde W^{L+1}c + \tilde \gamma^{L+1} .
\]
Since for any data point in $U_f(W,\bar c,R)$ the condition $\|f_{W^1,\ldots ,W^L}(c)-\bar c\|\le R$ must hold, we obtain the last constraint in the set $P(W,u,\bar c,R)$ which proves the result.
\end{proof}
Note that the set $P(W,u,\bar c,R)$ can be unbounded, since if there exists a point $\hat c\in P(W,u,\bar c,R)$, the polyhedron corresponding to the linear constraints has an infinite ray $r\in\R^N$, and if $r$ is in the kernel of the matrix $\tilde W^{L+1}$, then each point $\hat c + \lambda r$ for $\lambda\in[0,\infty)$ is contained in $P(W,u,\bar c,R)$. 

Estimating the size of $\mathcal{A}$, we find the following result.
\begin{corollary}\label{cor:numberOfSets}
The uncertainty set $ U_f(W,\bar c,R)$ is the union of at most $(d(k-1))^{NL}$ convex sets $P(W,u,\bar c, R)$, where $k=\max_{l\in [L]}k_l$ and $d:=\max_{l\in [L]}d_l$.
\end{corollary}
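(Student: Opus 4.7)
The plan is to start from Theorem~\ref{thm:unionOfConvexSets}, which already gives $U_f(W,\bar c,R)=\bigcup_{u\in\mathcal{A}}P(W,u,\bar c,R)$, and reduce the question to counting the activation patterns $u\in\mathcal{A}$ for which $P(W,u,\bar c,R)$ is actually nonempty. Empty pieces contribute nothing to the union, so an upper bound on the number of \emph{realizable} patterns yields the desired bound on the number of convex sets in the decomposition.

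To count the realizable patterns I would proceed layer by layer. Fix the activation pattern on layers $1,\dots,l-1$; the proof of Theorem~\ref{thm:unionOfConvexSets} shows that on the cell of $\R^N$ induced by this partial pattern the preactivation of layer $l$ equals the affine function $c\mapsto \tilde W^l c+\tilde\gamma^l$, where $\tilde W^l$ and $\tilde\gamma^l$ depend only on the already chosen $u^{\cdot,1},\dots,u^{\cdot,l-1}$. For each of the $d_l\le d$ neurons in layer $l$, selecting one of the $k_l$ intervals $[\underline{\beta}^{i,l},\overline{\beta}^{i,l})$ is decided by $k_l-1\le k-1$ threshold inequalities, and so this layer introduces at most $d(k-1)$ additional hyperplanes in the input space $\R^N$. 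The standard Zaslavsky-type bound then shows that these hyperplanes refine each current cell into at most $(d(k-1))^N$ sub-cells. Iterating over $l=1,\dots,L$ multiplies the counts and yields at most $((d(k-1))^N)^L=(d(k-1))^{NL}$ realizable activation patterns, which is exactly the claim.

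The main technical point is the per-layer hyperplane step: one has to verify that $\tilde W^l$ and $\tilde\gamma^l$ are genuinely functions of $u^{\cdot,1},\dots,u^{\cdot,l-1}$ only, so that the equations $(\tilde W^l c+\tilde\gamma^l)_j=\overline{\beta}^{i,l}$ cut $\R^N$ along true affine hyperplanes in $c$ rather than along moving surfaces. This is essentially the inductive computation already carried out in the proof of Theorem~\ref{thm:unionOfConvexSets}, so no new structural argument is needed; what remains is to invoke the standard (loose) arrangement estimate $\sum_{i=0}^{N}\binom{h}{i}\le h^{N}$ with $h=d(k-1)$, which delivers the stated bound.
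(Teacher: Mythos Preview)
Your proposal is correct and follows essentially the same approach as the paper: both arguments proceed layer by layer, observe that on a cell with fixed activation pattern for layers $1,\dots,l-1$ the $l$-th preactivation is an affine map contributing at most $d(k-1)$ hyperplanes, invoke the bound that $t$ hyperplanes in $\R^N$ create at most $t^N$ regions (the paper cites \cite{raghu2017expressive} for this, you write it as the Zaslavsky-type estimate $\sum_{i\le N}\binom{h}{i}\le h^N$), and multiply across the $L$ layers. Your extra remark that $\tilde W^l,\tilde\gamma^l$ depend only on $u^{\cdot,1},\dots,u^{\cdot,l-1}$ makes explicit a point the paper leaves implicit, but the overall structure is the same.
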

\begin{proof}
The idea of the proof is to bound the number of possible regions which arise by the linear inequalities in the definition of $P(W,u,\bar c,R)$. It was proved in \cite{raghu2017expressive} that given $t$ hyperplanes in $\R^N$, the number of regions (i.e. the number of connected open sets bounded on some sides by hyperplanes) is bounded from above by $t^N$. Considering the linear inequalities in $P(W,u,\bar c,R)$, in the first layer we have at most $d(k-1)$ hyperplanes describing the feasible region, since we have at most $d$ normal vectors and for each at most $k-1$ intervals given by the right hand sides. Therefore, the number of regions is bounded by $(d(k-1))^N$. Considering one of the regions, it has fixed activation pattern for the first layer and is then again divided into at most $(d(k-1))^N$ regions by the hyperplanes of the second layer. Hence, after the second layer we have at most $(d(k-1))^{2N}$ possible regions. Inductively we conclude that the number of possible regions given by the constraints in $P(W,u,\bar c,R)$ is bounded by $(d(k-1))^{NL}$.
\end{proof}
Note that if every layer has ReLU activation (which is continuous) and we choose the Euclidean norm, then
\begin{align*}
        P(W,u,\bar c, R):=\{ c\in\R^N \ | \ & \tilde w_j^l c \le  0 \text{ if } u_j^{l,1} =1 \text{ and } u_j^{l,2}=0 \ \forall j\in [d_l], l\in [L], \\
        & \tilde w_j^l c \ge  0 \text{ if } u_j^{l,1}=0 \text{ and } u_j^{l,2} = 1\ \forall j\in [d_l], l\in [L], \\
        & c^\top (\tilde W^{L+1})^\top  \tilde W^{L+1} c - 2 c^\top \tilde W^{L+1}\bar c + \|\bar c\|^2\le R^2 \} .
\end{align*}
Therefore, applying Theorem~\ref{thm:unionOfConvexSets} and Corollary~\ref{cor:numberOfSets} to the ReLU case, $U_f(W,\bar c,R)$ is the union of $\mathcal O(d^{NL})$ polyhedral cones intersected with the level set of one convex quadratic function. 

A direct consequence of Theorem \ref{thm:unionOfConvexSets} is that in general, the uncertainty set $U_f(W,\bar c,R)$ can be non-convex and even non-connected. Note that this property is irrelevant when we consider linear problems, since in this case, it is equivalent to replace the uncertainty set by its convex hull. This does not hold for multi-stage problems, where our method to construct uncertainty sets can be applied as well.

Another useful feature is that the size of the uncertainty set $U_f(W,\bar c,R)$ and hence the conservativeness of the corresponding robust optimization model can be controlled by the radius $R$. 
The radius $R$ can be chosen as the $(1-\eps)$-quantile of the radii $r_i=\|f_{W^1,\ldots ,W^L}(c^i)-\bar c\|_2$ of the training data, which results in a corresponding probability guarantee over the empirical distribution. However, as the resulting solution may be significantly more robust than the a priori guarantee suggests, a more suitable approach is to carry out experiments over a validation set to determine $R$, i.e., to choose a value that leads to the best tradeoff between feasibility and objective value.

 Moreover due to the non-convexity of $U_f(W,\bar c,R)$ its robust counterpart \eqref{eq:robustProblem} cannot be reformulated using classical duality results. Nevertheless, it is possible to optimize over $U_f(W,\bar c,R)$ in direction $x\in\R^N$ by solving for each $u\in \mathcal A$ the problem
\[
\max_{c\in P(W,u,\bar c,R)} x^\top c
\]
or deciding that it is infeasible. Then the best solution over all sets $P(W,u,\bar c,R)$ is the optimal solution. Alternatively it is possible to optimize over $U_f(W,\bar c,R)$ by solving a convex quadratic mixed-integer program. A similar idea was already used to model trained neural networks with ReLU activation in \cite{fischetti2018deep} and to train binarized neural networks in \cite{icarte2019training,bahBDNN2020}.
\begin{theorem}\label{thm:IPFormulation}
For a given solution $x\in X$ and a continuous activation function, the problem \[ \max_{c\in U_f(W,\bar c,R)} c^\top x\] is equivalent to the problem
\begin{align}
    \max \ & x^\top c^1 \label{adv-s} \\
    s.t. \quad & W^lc^l< \sum_{i\in [k_l]} u^{i,l}\overline{\beta}^{i,l} \quad \forall l\in [L] \label{constr:lower}\\
    & W^lc^l\ge \sum_{i\in [k_l]} u^{i,l}\underline{\beta}^{i,l} \quad \forall l\in [L] \label{constr:upper}\\
    & c^{l+1} = \text{diag}(\sum_{i\in [k_l]} u^{i,l}\alpha_i^l)W^lc^l + \sum_{i\in [k_l]} u^{i,l}\gamma_i^l \quad \forall l\in [L] \label{constr:outputLayer}\\
    & \sum_{i\in[k_l]}u^{i,l}=\1 \quad \forall l\in{L} \\
    & \|c^{L+1}-\bar c\|\le R\\
   & u^{i,l}\in \{0,1\}^{d_l} \quad \forall i\in [k_l], \ l\in [L]\\
   & c^{l}\in \R^{d_{l-1}} \quad \forall l\in[L+1].  \label{adv-e}
\end{align}
\end{theorem}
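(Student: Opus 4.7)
The plan is to prove the equivalence by exhibiting a correspondence between feasible points $c \in U_f(W,\bar c,R)$ and feasible tuples $(c^1,\ldots,c^{L+1},u)$ of the MIP \eqref{adv-s}--\eqref{adv-e}, in which $c^1 = c$ and the objective values match. Conceptually, the variables $c^l$ represent the input to layer $l$ (so $c^1$ is the candidate scenario and $c^{L+1}$ is the network output), while the binary vector $u^{i,l}$ encodes which linear piece of the piecewise-affine activation function each neuron selects; this is precisely the activation pattern notion introduced before Theorem~\ref{thm:unionOfConvexSets}, so the proof can be viewed as an explicit MIP-lifting of the disjunctive description $U_f=\bigcup_{u\in\mathcal A} P(W,u,\bar c,R)$.

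\emph{Forward direction.} Given $c\in U_f(W,\bar c,R)$, I set $c^1:=c$ and inductively $c^{l+1}:=\sigma^l(W^l c^l)$, so that $c^{L+1}=f_{W^1,\ldots,W^L}(c)$. For each layer $l$ and component $j$, there is a unique interval index $i(j,l)\in[k_l]$ with $(W^lc^l)_j\in[\underline\beta^{i,l},\overline\beta^{i,l})$; letting $u^{i,l}_j = 1$ exactly when $i = i(j,l)$ produces a valid activation pattern, the pre-activation vector then satisfies \eqref{constr:lower}--\eqref{constr:upper}, and the definition \eqref{eq:activationFunction} of $\sigma^l$ on the selected piece gives \eqref{constr:outputLayer}. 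The norm bound $\|c^{L+1}-\bar c\|\le R$ is inherited from $c\in U_f$, and the objective evaluates to $x^\top c^1=x^\top c$.

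\emph{Converse direction.} Given any feasible tuple of the MIP, the binary and sum constraints force exactly one interval per neuron per layer, and \eqref{constr:lower}--\eqref{constr:upper} locate $(W^l c^l)_j$ in the selected interval. Then \eqref{constr:outputLayer} is exactly $c^{l+1}=\sigma^l(W^lc^l)$ for that selected piece, so inductively $c^{L+1}=f_{W^1,\ldots,W^L}(c^1)$. The norm constraint therefore certifies $c^1\in U_f(W,\bar c,R)$, and the objective once again equals $x^\top c^1$.

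\emph{Main obstacle.} The step I expect to handle most carefully is the treatment of boundary pre-activations. In \eqref{eq:activationFunction} the intervals are closed on both sides and overlap at $\overline\beta^{i,l}=\underline\beta^{i+1,l}$, whereas the MIP uses a strict upper inequality \eqref{constr:lower} to pick a unique $u$. Here the continuity assumption on $\sigma^l$ is essential: the two neighboring affine pieces agree at the shared endpoint, so both admissible choices of $u$ yield the same $c^{l+1}$, which propagates to the same network output and hence to the same feasibility and objective. This continuity also justifies that the maximum of $c^\top x$ over $U_f(W,\bar c,R)$ is attained whenever the MIP is feasible, reconciling the strict inequalities in the MIP with the closed definition of $U_f(W,\bar c,R)$.
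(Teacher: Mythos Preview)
Your proposal is correct and follows essentially the same approach as the paper: establish that the feasible $c^1$-values of the MIP coincide with $U_f(W,\bar c,R)$ by interpreting the $u$-variables as activation patterns in $\mathcal A$ and the $c^{l+1}$ as the post-activation layer outputs, then invoke the layer-by-layer description of $f_W$. The paper's proof is terser, simply pointing back to the argument of Theorem~\ref{thm:unionOfConvexSets}, whereas you spell out both directions of the bijection explicitly and, in particular, make precise why the strict upper bound in \eqref{constr:lower} causes no loss at interval endpoints---a point the paper leaves implicit in the continuity hypothesis.
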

\begin{proof}
The set of feasible variable assignments of the variables $u^{i,l}$ are exactly all possible activation patterns in $\mathcal A$. The constraints \eqref{constr:lower}-\eqref{constr:upper} ensure that the activation pattern given by the $u$-variables is true for the data point $c^1\in\R^N$. Furthermore, the variables $c^{l+1}$ model the output of layer $l$ after applying the activation function componentwise. Following the proof of Theorem~\ref{thm:unionOfConvexSets}, the set of feasible solutions $c^1\in\R^N$ of the formulation is equal to $U_f(W,\bar c,R)$. 
\end{proof}
Note that the quadratic terms in Constraint~\eqref{constr:outputLayer} can be linearized by standard linearization techniques. Therefore, the problem in Theorem~\ref{thm:IPFormulation} is equivalent to a mixed-integer program with a convex quadratic constraint
if we use the Euclidean norm, or a mixed-integer linear program if we use the $\ell_1$-norm. Hence, the adversarial Problem \eqref{eq:adversarialProblem} can be solved by this formulation using classical integer programming solvers as CPLEX or Gurobi. Using the iterative constraint generation procedure (see Section~\ref{sec:preliminaries}), we can then solve the robust problem~\eqref{eq:robustProblem}.

We conclude this section by describing an alternative solution method to simplify the adversarial problem~(\ref{adv-s}-\ref{adv-e}) by avoiding the use of binary variables $u^{i,l}$. To this end, we apply all scenarios $c^1,\ldots ,c^m$ from the training data to the neural network to calculate the activation pattern for each of them, i.e. calculating the values of the $u$-variables for each data point $c^j$ which we denote by $u^{i,l}(c^j)$ for all $i\in [k_l],l\in[L],j\in [m]$. These values correspond to those sets $P(W,u,\bar{c},R)$ that contain actual training data. Having determined $S := \{ u(c^1),\ldots ,u(c^m)\}\subset \mathcal A$, we then solve~(\ref{adv-s}-\ref{adv-e}) repeatedly for each choice of $u\in S$. This means that we solve possibly up to $m$ problems (in practice, $|S|\ll m$), where each problem is given as a quadratic convex program which can be solved efficiently. Furthermore, solving these problems can be easily parallelized. This approach to generating adversarial scenarios is used in the following experiments. However, note that in general, the radius $R$ can be chosen in a way that $U_f(W,\bar c,R)$ does not contain any of the training data points at all.

\subsection{Complexity of Robust Optimization}\label{sec:complexity}

As shown in the previous section, the uncertainty sets constructed via deep one-class learning have a more complex structure than the classical uncertainty classes considered in the robust optimization literature, e.g. polyhedral, ellipsoidal or discrete uncertainty. Therefore it is reasonable to expect that the robust problem is at least as hard to solve as for the classical uncertainty sets. In this section we prove that indeed all of the three mentioned uncertainty classes are special cases of the uncertainty sets constructed in the previous section, and therefore known NP-hardness results for the classical sets are valid for deep learning uncertainty as well.
\begin{lemma}\label{lem:complexityRO}
Respectively for each of the following uncertainty sets, there exists a neural network with at most two layers, given by its weight matrices $W^1,W^2$ and bias terms $b^1,b^2$, a center point $\bar c$ and a radius $R\ge 0$, such that $U_f(W,\bar c, R)=U_i$, where
\begin{enumerate}[(i)]
\item $U_1=\left\{ c\in \R^N \ | \ (c-a)^\top \Sigma (c-a)\le 1\right\}$, $\Sigma\in\R^{N\times N}$ is a symmetric and positive definite matrix and $a\in\R^N$,
\item $U_2=\left\{ c\in\R^N \ | \ Ac\le b\right\}$ where $A\in\R^{p\times N}$, $b\in\R^p$,
\item $U_3=\left\{ c\in\{ 0,1\}^N \ | \ Ac = b\right\}$ where $A\in\R^{p\times N}$, $b\in\R^p$.
\end{enumerate}
\end{lemma}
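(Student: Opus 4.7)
The plan is to construct explicit neural networks for each of the three uncertainty classes, exploiting the freedom to choose the norm, the bias terms, the center $\bar c$ and the radius $R$. The observation behind all three constructions is that setting $R=0$ collapses the ball constraint $\|f(c)-\bar c\|\le R$ into the equality $f(c)=\bar c$, which can encode arbitrary equality/inequality systems provided the network output is arranged to be componentwise non-negative, while a positive $R$ naturally captures ellipsoidal sublevel sets through a single linear layer.

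For case (i) I would write $\Sigma=M^\top M$ (possible because $\Sigma$ is symmetric positive definite, e.g.\ via Cholesky) and use a one-layer network with $W^1=M$, bias $-Ma$, and identity activation, which fits the scheme \eqref{eq:activationFunction} with a single interval. Choosing $\bar c=0$, $R=1$, and the Euclidean norm gives $\|f(c)\|_2^2=(c-a)^\top\Sigma(c-a)$, so $U_f(W,\bar c,R)=U_1$. For case (ii) I would use a single ReLU layer with $W^1=A$, bias $-b$, center $\bar c=0$, radius $R=0$, and any norm (say $\ell_1$). Since $\sigma(Ac-b)\ge 0$ componentwise, its norm vanishes iff $\sigma(a_j^\top c - b_j)=0$ for every row, which is exactly the polyhedral description $Ac\le b$.

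The main obstacle is case (iii), where the combinatorial constraint $c\in\{0,1\}^N$ must be encoded by the zero set of a piecewise-affine network, not by a sublevel set of a convex function. The key lemma I would prove is that the univariate function $g(x):=\min(|x|,|x-1|)$ is non-negative on all of $\R$, vanishes exactly on $\{0,1\}$, and admits the explicit two-layer ReLU representation
\[
g(x)=\sigma(-x)+\sigma(x)-2\sigma(x-\tfrac{1}{2})+2\sigma(x-1),
\]
which can be checked by evaluating the four linear pieces on $(-\infty,0]$, $[0,\tfrac{1}{2}]$, $[\tfrac{1}{2},1]$ and $[1,\infty)$. Combining one such $g(c_i)$ for each $i\in[N]$ with the $p$ functions $|a_j^\top c-b_j|=\sigma(a_j^\top c-b_j)+\sigma(b_j-a_j^\top c)$ into an $(N+p)$-dimensional output of a two-layer ReLU network yields a vector that is componentwise non-negative and equals zero precisely when $c\in U_3$. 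Taking $\bar c=0$, $R=0$ and the $\ell_1$ norm then gives $U_f(W,\bar c,R)=U_3$. The subtle part is producing a piecewise-affine function whose zero set is the isolated pair $\{0,1\}$ rather than an interval, which rules out naive constructions based only on $\sigma(-x)+\sigma(x-1)$; verifying the explicit four-term ReLU formula above is what overcomes this difficulty.
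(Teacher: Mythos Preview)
Your constructions for cases (i) and (ii) are correct and essentially identical to the paper's (the paper uses two identity layers rather than one in (i), and the Euclidean norm rather than $\ell_1$ in (ii), but these are cosmetic differences).

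For case (iii) your argument is also correct, but the route differs from the paper's. You build the double-well function $g(x)=\min(|x|,|x-1|)$ out of four ReLUs in a hidden layer and then combine them linearly in a second layer, together with the two ReLUs encoding $|a_j^\top c-b_j|$; the zero set of the resulting $(N+p)$-vector is exactly $U_3$. The paper instead uses a \emph{single} layer whose activation function is directly the four-piece function $\sigma^1(x)=g(x)$ (which is admissible under the general piecewise-affine class \eqref{eq:activationFunction}). Applying this activation to $c$ forces $c\in\{0,1\}^N$, while the equality $Ac=b$ is encoded by a neat trick: feed both $Ac-b$ and $Ac-b+\1$ through the same activation, so that each component of $Ac-b$ must lie in $\{0,1\}$ and each component of $Ac-b+\1$ must also lie in $\{0,1\}$, forcing $Ac-b=0$. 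Your approach has the advantage of sticking to the standard ReLU activation, at the cost of an extra layer and a larger hidden width ($4N+2p$ neurons); the paper's approach keeps the network depth at one and the width at $N+2p$, but relies on the freedom to choose a non-standard piecewise-affine activation.
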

\begin{proof}
\textit{Case (i):} Calculate the Cholesky decomposition of $\Sigma$, i.e. a matrix $V\in\R^{N\times N}$ such that $V^\top V = \Sigma$ which can be done in polynomial time. Define a neural network with $L=2$ layers and bias term $b^1$ in the first layer as follows: set $W^1=E_n$, $b^1=-a$, $W^2=V$, $\sigma^1(x)=x$ and $\sigma^2(x)=x$. Furthermore define $\bar c = 0$, $R=1$ and use the Euclidean norm. Then for a given data point $c\in\R^N$ the output of the first layer is $c-a$. The output of the second layer is then $V(c-a)$. Substituting this into the norm constraint we obtain that $c\in U_f(W,\bar c, R)$ if and only if
\[
\| V(c-a)\|^2 = (c-a)^\top V^\top V (c-a) = (c-a)^\top \Sigma (c-a)\le 1
\] 
and hence $U_f(W,\bar c, R)=U_1$.

\textit{Case (ii):} Define a neural network with $L=1$ layer and bias term $b^1$ as follows: set $W^1=A$, $b^1=-b$ and $\sigma^1(x)=\max\{0,x\}$.  Furthermore define $\bar c = 0$, $R=0$ and use the Euclidean norm. Then for data point $c\in\R^N$ the output $y\in\R^p$ of the first layer is the all-zero vector if and only if $c\in U_2$. If $c\notin U_2$ at least one of the components of $y$ is strictly positive. Since by definition $c\in U_f(W,\bar c, R)$ if and only if $\|y\|^2\le 0$ and therefore $y=0$, we have $U_f(W,\bar c, R)=U_2$.

\textit{Case (iii):} Define a neural network with $L=1$ layer and bias term $b^1$ as follows: set 
\[
W^1 = \begin{pmatrix}E_n \\ A \\ A \end{pmatrix}, \quad b^1 = \begin{pmatrix}0 \\ -b \\ -b + \1 \end{pmatrix}
\]
and define the continuous piecewise affine activation function 
\[
\sigma^1(x) = \begin{cases} 
-x & \text{ if } x\le 0 \\ 
x & \text{ if } x\in [0,\frac{1}{2}] \\ 
-x +1 & \text{ if } x\in [\frac{1}{2},1] \\ 
x - 1 & \text{ if } x\ge 1 .\\ 
\end{cases}
\]
Note that $\sigma^1(0)=\sigma^1(1)=0$ and $\sigma^1(x)>0$ for all $x\in\R\setminus\{0,1\}$. Furthermore define $\bar c = 0$, $R=0$ and use the Euclidean norm. We have to show $U_f(W,\bar c, R)=U_3$. On the one hand for a given $c\in U_3$ we have $Ac-b = 0$ and therefore $Ac-b+\1 = \1$. Furthermore each component $c_i$ is either $0$ or $1$ and therefore by definition of $\sigma^1$ the output $y$ of the first layer is the all-zero vector and hence $\|y\|^2 \le 0$. On the other hand for a given $c\in U_f(W,\bar c, R)$ the output of the first layer must be the all-zero vector. Therefore by definition of $\sigma^1$ each component of $W^1c+b^1$ must be either $0$ or $1$. Hence $E_nc=c\in\{0,1\}^N$, $Ac-b\in \{0,1\}^p$ and $Ac-b+\1\in \{0,1\}^p$. The latter two conditions can only be true at the same time if $Ac=b$ and therefore $c\in U_3$.
\end{proof}
It is well known that the robust problem with objective uncertainty \eqref{eq:robustProblemObjectiveUncertainty} is already strongly NP-hard if $U$ is a polyhedron or an ellipsoid and if $X=\{0,1\}^N$; see \cite{buchheimrobust}. Therefore by Lemma~\ref{lem:complexityRO} Problem \eqref{eq:robustProblemObjectiveUncertainty} with uncertainty set $U_f(W,\bar c, R)$ is strongly NP-hard even if $X=\{0,1\}^N$. In the following we prove that even calculating the worst-case scenario is already strongly NP-hard.
\begin{theorem}
The adversarial problem 
\[
\max_{c\in U_f(W,\bar c, R)} c^\top x
\]
is strongly NP-hard.
\end{theorem}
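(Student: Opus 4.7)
The plan is to reduce from a strongly NP-hard 0-1 optimization problem whose feasible region can be encoded in the form $\{c \in \{0,1\}^N : Ac = b\}$, and then invoke Lemma~\ref{lem:complexityRO}(iii) to realize this set as $U_f(W,\bar c,R)$ via an explicit neural network. The construction in that lemma produces a one-layer network whose weight matrix stacks $E_n$, $A$, $A$ and whose bias stacks $0$, $-b$, $-b+\1$; hence its size is polynomial in $(A,b)$ and its entries are bounded by those of $(A,b)$. Consequently any strongly NP-hard 0-1 problem with polynomially bounded coefficients lifts to strong NP-hardness of the adversarial problem.

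A clean choice is Maximum Independent Set, which is strongly NP-hard because the input has no numerical data beyond graph structure. Given $G = (V,E)$, I would introduce binary variables $c_v$ for $v \in V$ together with slacks $s_e \in \{0,1\}$ for each $e \in E$, subject to the equalities $c_u + c_v + s_e = 1$ for every edge $e = \{u,v\}$. Take the objective direction $x$ with $x_v = 1$ for $v \in V$ and $x_e = 0$ on the slack coordinates. Then $\max\{c^\top x : c \in \{0,1\}^N,\ Ac = b\}$ equals the independence number of $G$. Because all entries of $A$, $b$, and $x$ lie in $\{0,1\}$, the encoding size is linear in $|V|+|E|$.

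Applying Lemma~\ref{lem:complexityRO}(iii) to this $(A,b)$ yields a one-layer neural network, using the continuous piecewise-affine activation defined in the lemma, together with center $\bar c = 0$ and radius $R = 0$, such that $U_f(W,\bar c,R)$ equals the feasible region of the 0-1 program above. The instance map from $G$ to $(W,\bar c,R,x)$ runs in polynomial time and all numerical values remain $O(1)$ in magnitude; therefore solving $\max_{c \in U_f(W,\bar c,R)} c^\top x$ solves Maximum Independent Set on $G$, which gives strong NP-hardness of the adversarial problem.

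The main obstacle I anticipate is bookkeeping rather than conceptual: one has to verify that the piecewise-affine activation produced by Lemma~\ref{lem:complexityRO}(iii) genuinely falls within the continuous piecewise-affine class of \eqref{eq:activationFunction} (which it does, with four affine pieces), and that every numerical magnitude appearing in the transformed instance stays polynomially bounded so that the conclusion is the stronger "strongly NP-hard" statement rather than merely "NP-hard". Since only $\{0,1\}$-valued coefficients occur throughout the reduction, both checks are immediate.
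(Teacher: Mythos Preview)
Your proposal is correct and follows essentially the same approach as the paper: reduce a strongly NP-hard 0--1 optimization problem to the form $\max\{c^\top x : c\in\{0,1\}^N,\ Ac=b\}$ and then invoke Lemma~\ref{lem:complexityRO}(iii) to realize that feasible set as $U_f(W,\bar c,R)$. The only difference is the source problem: the paper reduces from 3-Partition (adding an auxiliary binary variable $y$ to guarantee feasibility), whereas you use Maximum Independent Set with edge slacks, which is an equally valid---and arguably tidier---choice since all coefficients are already in $\{0,1\}$.
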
 
\begin{proof}
We can prove the result by reducing the $3$-partition problem, see \cite{gareyjohnson}. Given a set of weights $a_1,\ldots ,a_{3m}$ and a bound $B\in\N$ such that $\sum_{i=1}^{3m} a_i = mB$ and $\frac{B}{4}< a_i < \frac{B}{2}$, the $3$-partition problem asks if there is a partition of the weights into $m$ disjoint sets $A_1,\ldots , A_m\subset [3m]$ such that $\sum_{i\in A_j}a_i=B$ for all $j\in [m]$. Note that due to the weight restrictions any set $A_j$ with $\sum_{i\in A_j}a_i=B$ contains exactly $3$ elements. Hence the problem can be modeled by the constraints
\begin{align*}
& a^\top x^j = B \quad j\in [m]\\
& \sum_{j\in[m]}x^j = \1 \\
& x^j\in \{0,1\}^{3m} \ j\in [m] .
\end{align*} 
We can extend this to the optimization problem
\begin{align*}
& \max \ -y \\
s.t. \quad & a^\top x^j = B - By \quad j\in [m]\\
& \sum_{j\in[m]}x^j = \1 - \1 y \\
& x^j\in \{0,1\}^{3m} \ j\in [m]\\
& y\in \{0,1\} .
\end{align*} 
Note that the latter problem always has a feasible solution since we can set $y=1$ and all $x^j$ to the all-zero vector. On the other hand if $y=0$ then any feasible solution is a solution of the $3$-partition problem. Since we maximize $-y$ we can conclude that the $3$-partition instance is a yes-instance if and only if the optimal value of the latter optimization problem is $0$. The latter optimization problem is of the form
\[
\max_{c\in U} c^\top x 
\]  
with $U=\left\{ c\in \{ 0,1\}^{3m^2+1} \ | \ Ac=b\right\}$ and $x_{3m^2+1}=-1$, $x_i=0$ for all $i\in [3m^2]$. By Lemma \ref{lem:complexityRO} this is an instance of the problem $\max_{c\in U_f(W,\bar c, R)} c^\top x$.
\end{proof}

\section{Experiments}\label{sec:experiments}

In the following we conduct two experiments to assess the quality of the robust solutions found by our approach based on constructing uncertainty sets via deep neural networks (denoted as NN in the following). To this end, we compare against the related method from \cite{shang2017data} presented in Section~\ref{sec:preliminaries} which is to construct polyhedra based on support vector clustering with suitable kernel (denoted as Kernel). We will sometimes write NN or Kernel when referring to the solutions that were generated by the respective methods.

In the first experiment we conduct tests on randomly generated data to gain statistical insights. In the second experiment we conduct an experiment with real traffic-data from the City of Chicago.

All experiments were carried out on a virtual machine running Ubuntu 18.04.~and using ten Intel Xeon CPU E7-2850 processors. Code was implemented in Python, where we solved mathematical programs with Gurobi version 9.0.3 (except for solving the Kernel training problem~\eqref{eq:SVCdual}, where CPLEX version 12.8 was used). To train neural networks, we used the PyTorch implementation from \cite{ruff2018deep}, which is available online\footnote{https://github.com/lukasruff/Deep-SVDD-PyTorch}. Our code and data is made available online\footnote{https://github.com/goerigk/RO-DNN} as well.

\subsection{Experiment 1: Random Data Experiments}

\subsubsection{Setup}

In this section we solve two types of robust optimization problems. In the first type, the uncertain parameters only appear in the objective function. We use a simple budget constraint in addition, which results in optimization problems of the form
\begin{align*}
\min\ &\max_{c\in U} c^\top x \tag{Obj}\\
\text{s.t. } & \sum_{i=1}^{N}{x_i} = \text{RHS}^{obj} \\
& x \in [-1,1]^N .
\end{align*}
In the second type of problems, we assume that the uncertain parameters appear in the constraints, and solve
\begin{align*}
\max\ & \sum_{i=1}^{N}{x_i} \tag{Feas}\\
\text{s.t. } &c^\top x \le \text{RHS}^{feas} \quad \forall \ c\in U\\
& x \in [-1,1]^N,
\end{align*}
where in both cases, $U$ can be replaced by the uncertainty set generated by Kernel or by NN. In our experiments, we use problem dimensions $N=10,20,40$ and set $\text{RHS}^{obj} = N/2$ and $\text{RHS}^{feas}=50N$.

To generate data sets, we use \cite{shang2017data} as a starting point and consider three types of data. The first type is multivariate normal distributed (denoted as \textit{Gaussian}), where we create random positive definite covariance matrices using the sklearn Python package. The second type consists of two such distributions that are created independently; for each data point that is sampled, we decide with equal probability whether the first or the second distribution is used (denoted as \textit{Mixed Gaussian}). For Mixed Gaussian data, we additionally ensure that the mean-points of the two distributions do not lie in the same quadrant. Finally, the third set is sampled uniformly from a polyhedron that is constructed in the manner of budgeted uncertainty \cite{BertsimasS04}. That is, scenarios are sampled from sets
\[ \mathcal{U} = \{ c : c_i = \underline{c}_i + \overline{c}_i \delta_i,\ \sum_{i=1}^N \delta_i \le \Gamma \} \]
where the lower and upper bounds $\underline{c}_i$ and $\overline{c}_i$ are chosen randomly and $\Gamma=\frac{N}{2}$ (denoted as \textit{Polyhedral}). For a detailed description of the data generation, we refer to our publicly available code. Figure~\ref{exp1:fig1} gives examples for these types of data in two dimensions. White crosses indicate the training data points. The first two figures show the Gaussian set, the middle two figures show the Mixed Gaussian set, and the last two figures show the Polyhedral set.

For each set, we sample 10,000 test data points. For training, we use different sample sizes $m=250,500,1000$, where $0.95m$ data points are sampled from the correct distribution, and $0.05m$ data points are sampled uniformly in $[0,300]^N$ to simulate outliers which are also included in Figure~\ref{exp1:fig1}. For each configuration of $m$, $N$, and type (Gaussian, Mixed Gaussian, Polyhedral), 10 data sets are generated (a total of 270). 

For NN, we use neural networks with three layers of dimensions $N\times 50$, $50\times 50$ and $50\times 50$, respectively, with a ReLU operator after the first and second layer. To train the networks, we use a loss function that aims at minimizing the radius of data points preceding a given $(1-\eps)\%$ radius quantile, and maximizing the radius of subsequent data points. More specifically, to calculate our loss function, we sort the radii of the $m$ training data points; let $r_{\pi(1)} \le  r_{\pi(2)} \le \ldots \le r_{\pi(m)}$ be such a sorting. The loss is then calculated as
\[ \sum_{i=1}^k a_i \cdot r_{\pi(\lfloor(1-\epsilon)m\rfloor-i)} - \sum_{i=1}^k b_i \cdot r_{\pi(\lfloor(1-\epsilon)m\rfloor+i)}.\]
In this experiment, we use $k=5$, $a_i = 5i$, $b_i = i$, and $\epsilon = 0.1$.
This way, we enforce the network to include those data points where the classification confidence is high, and to exclude the others. 


\begin{figure}[htbp]
\begin{center}
\subfigure[Gaussian data, NN set.]{\includegraphics[width=0.49\textwidth]{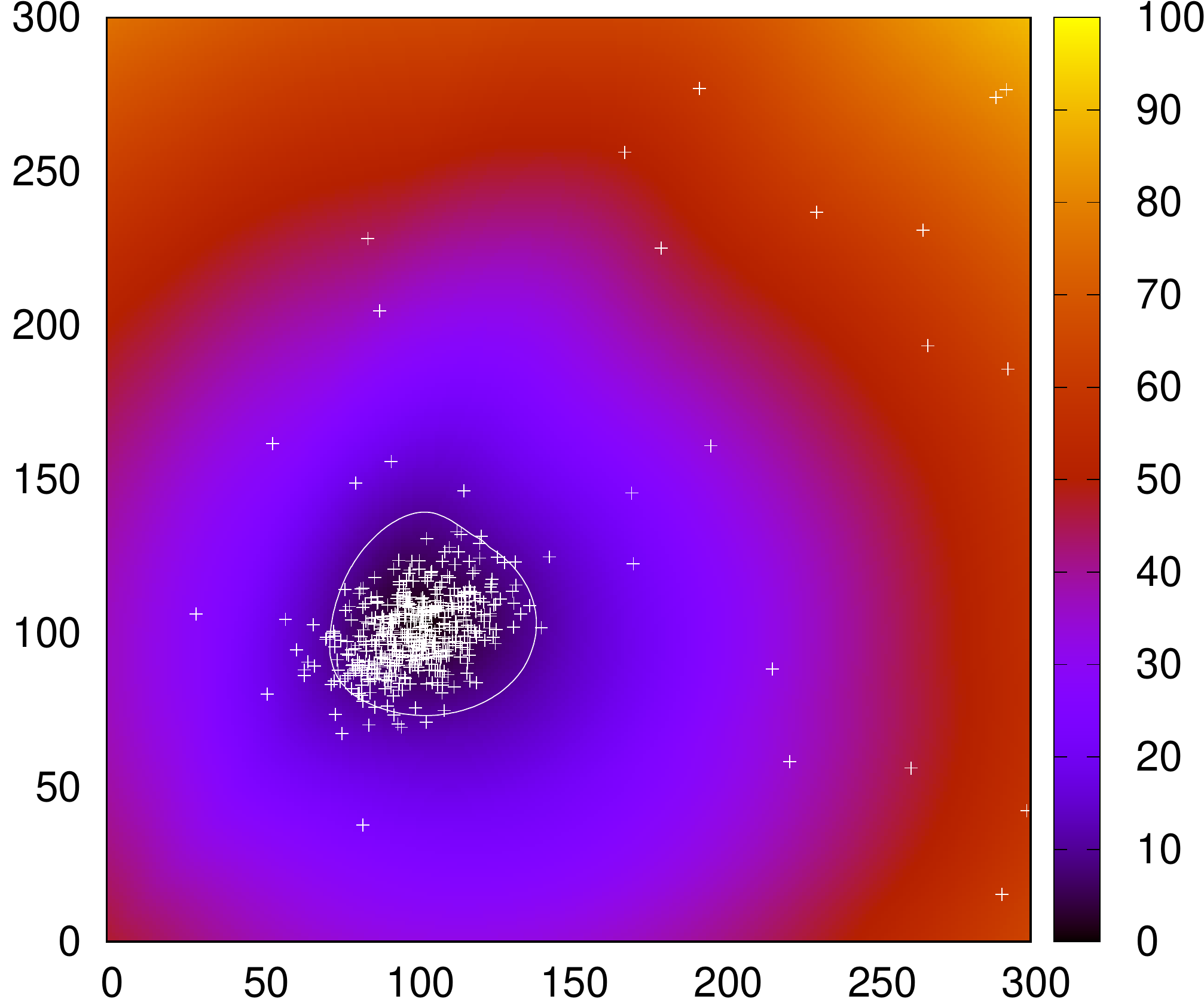}}%
\hfill%
\subfigure[Gaussian data, Kernel set.]{\includegraphics[width=0.49\textwidth]{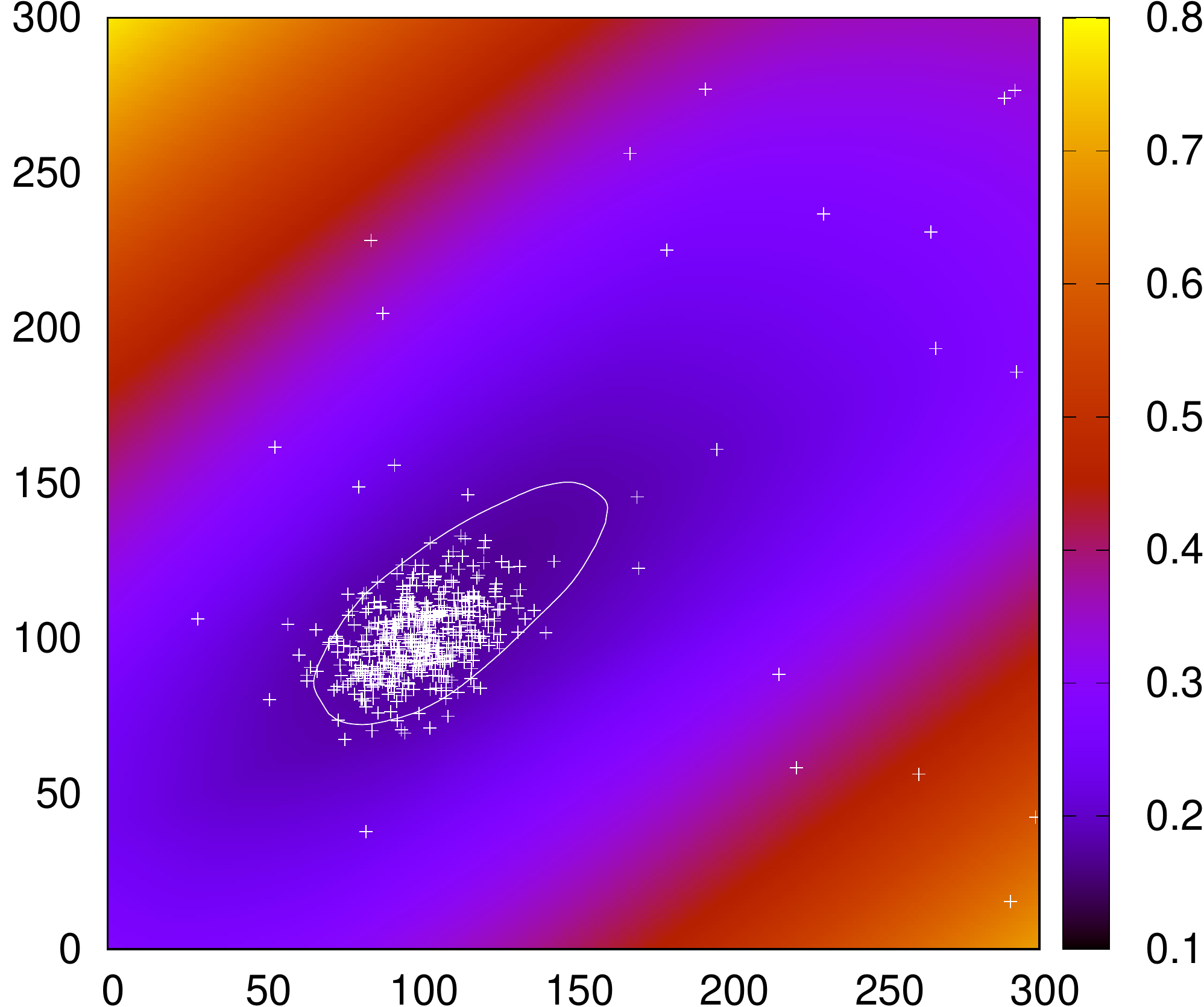}}
\subfigure[Mixed Gaussian data, NN set.]{\includegraphics[width=0.49\textwidth]{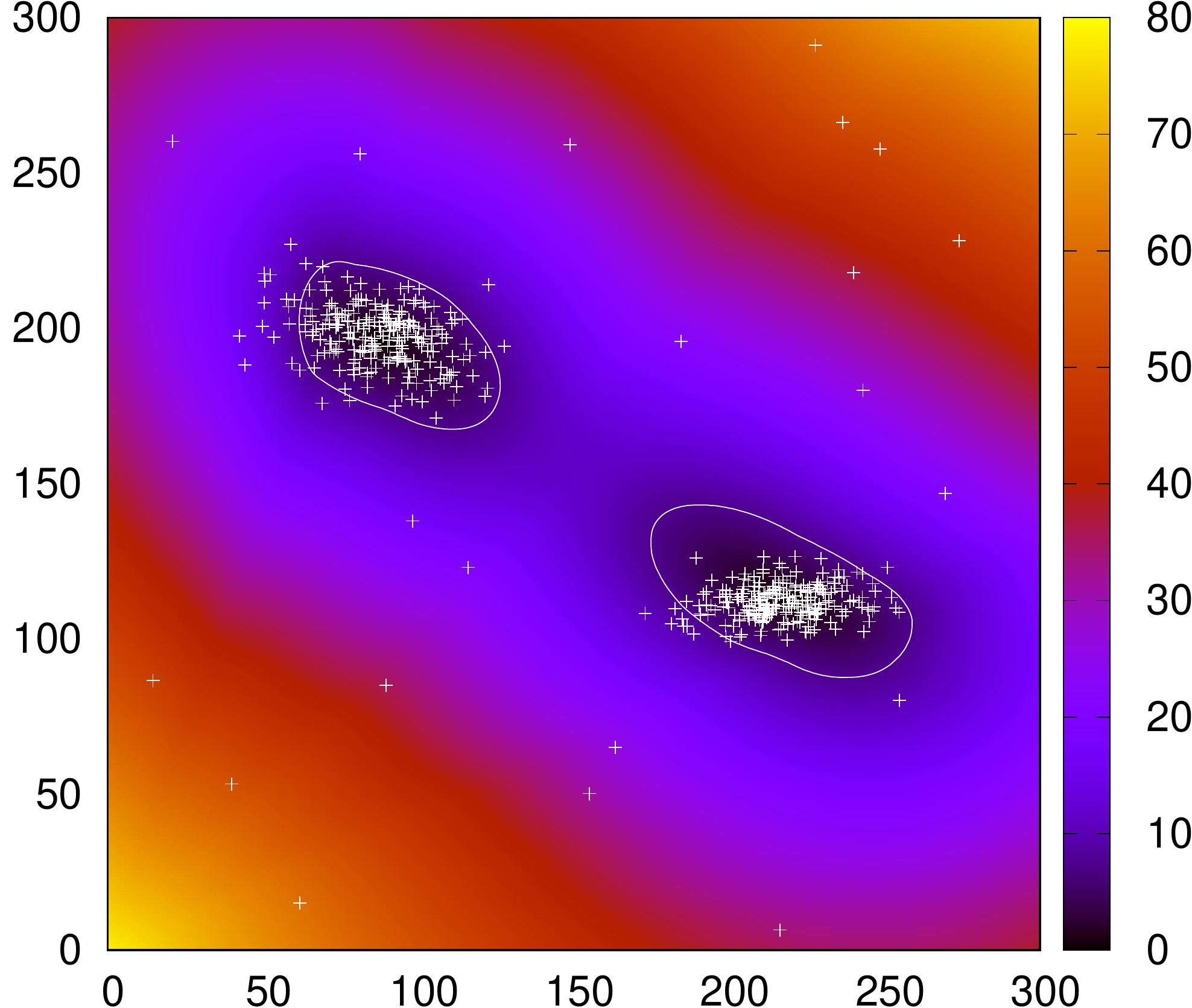}}%
\hfill%
\subfigure[Mixed Gaussian data, Kernel set.]{\includegraphics[width=0.49\textwidth]{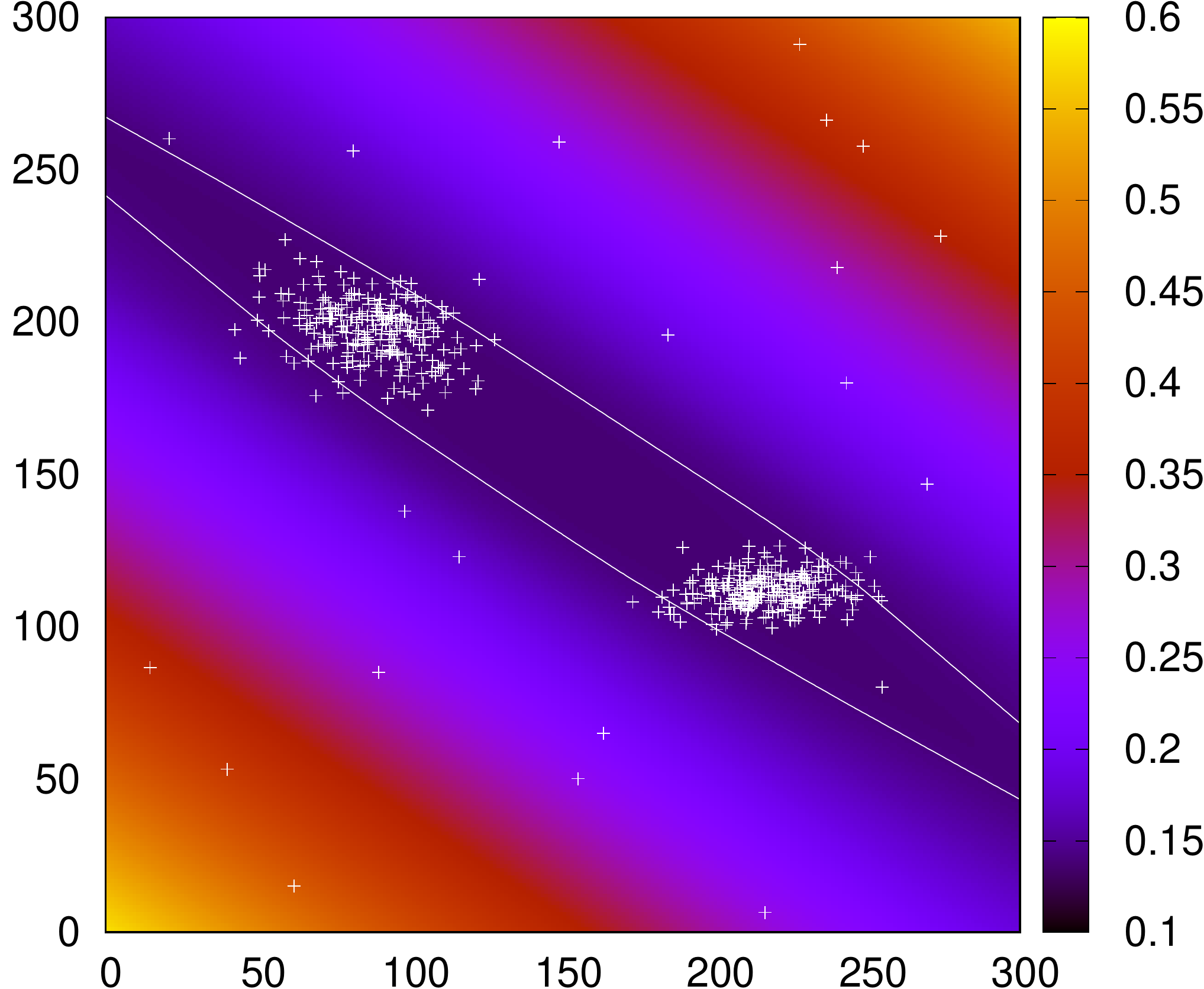}}
\subfigure[Polyhedral data, NN set.]{\includegraphics[width=0.49\textwidth]{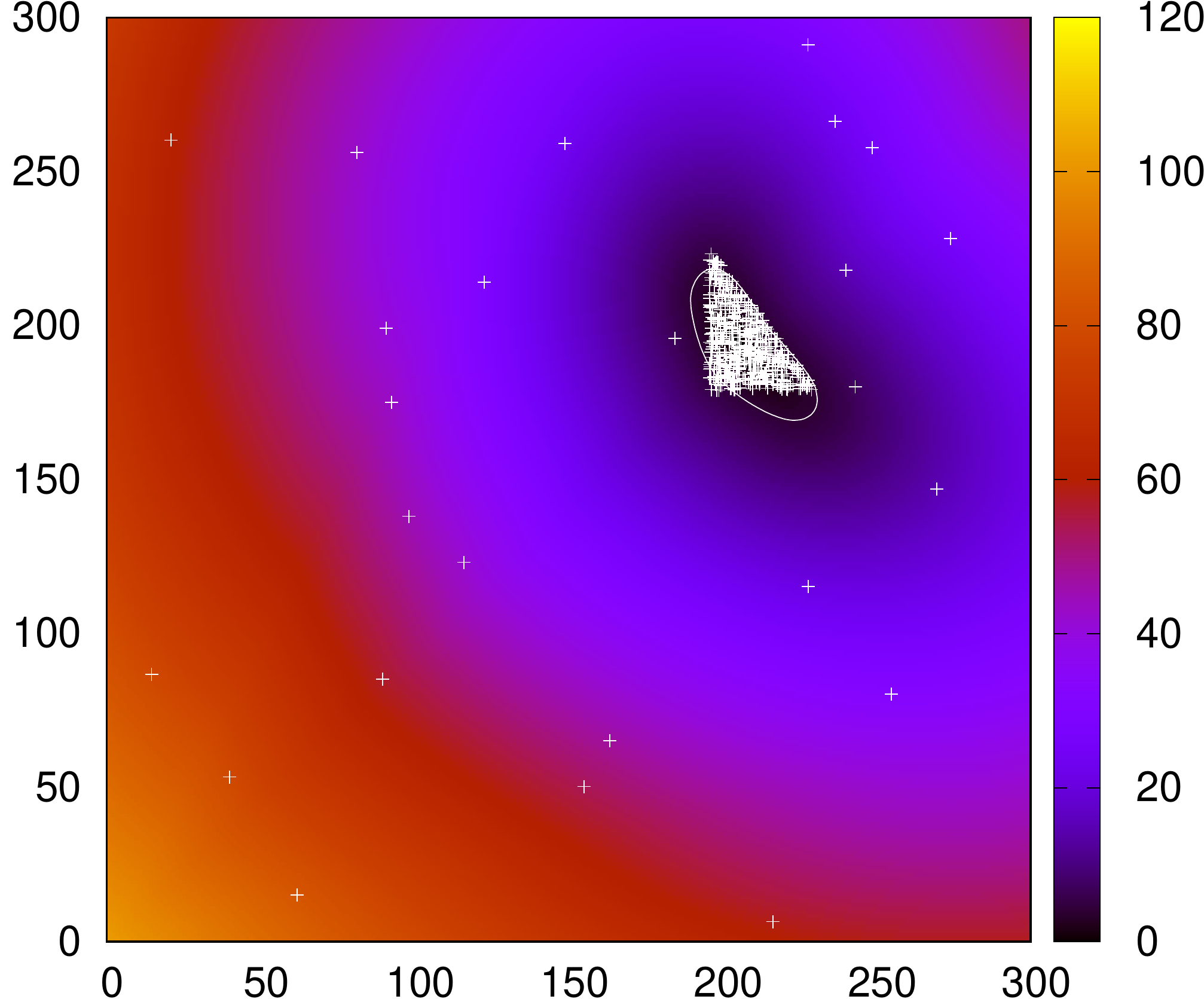}}%
\hfill%
\subfigure[Polyhedral data, Kernel set.]{\includegraphics[width=0.49\textwidth]{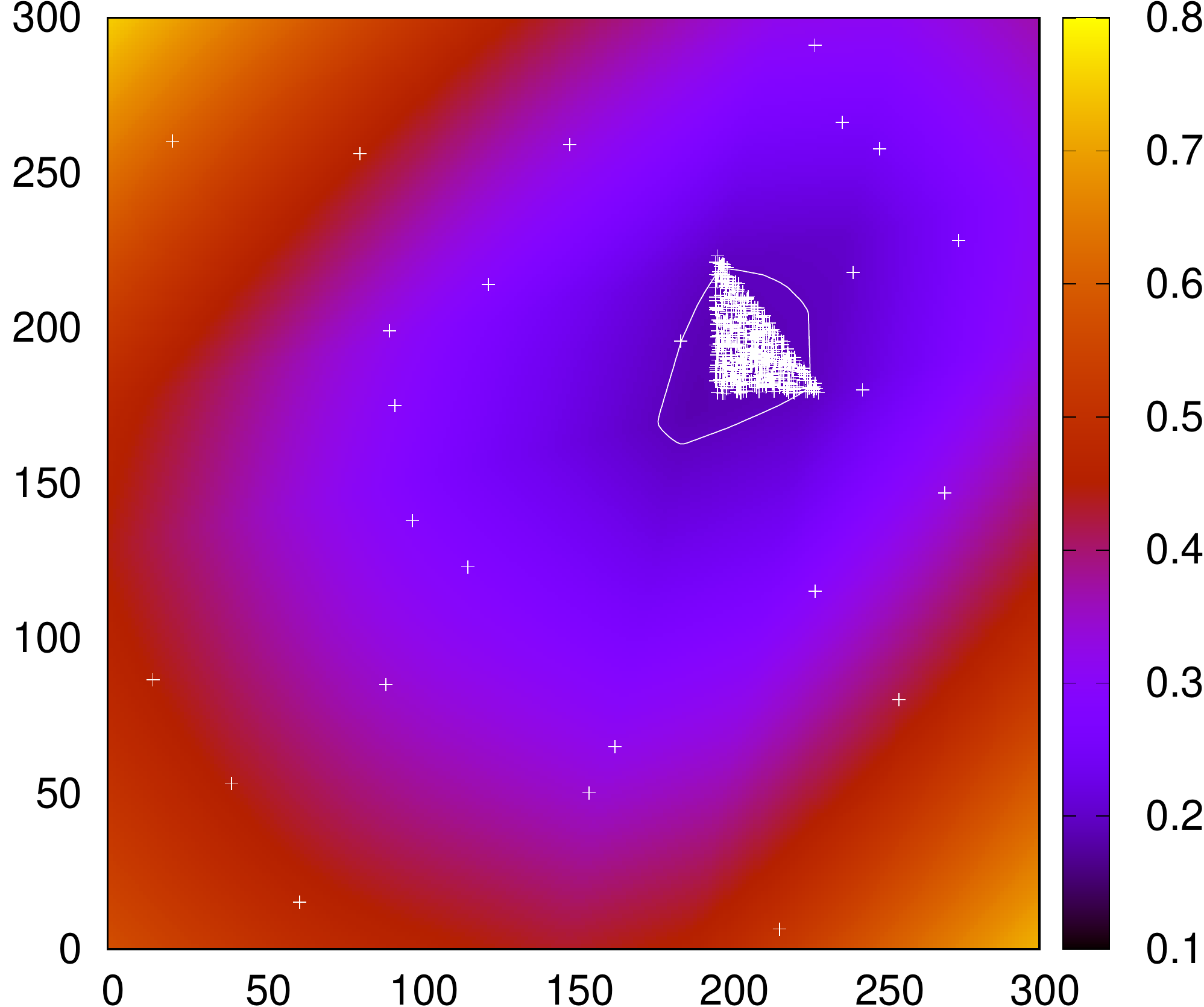}}
\caption{Visual comparison of data sets and uncertainty sets for examples with $N=2$.}\label{exp1:fig1}
\end{center}
\end{figure}

Plots on the left of Figure~\ref{exp1:fig1} show the radius the neural network associates with each point, where a white line shows the level set of the 90\% radius quantile; i.e., all points within the white lines are considered as possible scenarios. For the figures on the right, we show the value $\sum_{i\in \text{SV}} \alpha_i v_i^\top \1$ with $v_{ij} = |(Q(c-c^i))_j|$. The white line indicates those points where this value is less or equal to $\min_{i'\in \text{BSV}}\sum_{i\in\text{SV}}\alpha_i \|Q(c^{i'}-c^i) \|_1$, i.e., points that are considered as possible scenarios by Kernel.


\subsubsection{Results for Objective Uncertainty}

We first concentrate on results for problem (Obj), where uncertainty only appears in the objective. We train Kernel using $1-\nu=0.1$, corresponding to a $90\%$ quantile. For NN, we use a radius quantile of $90\%$ as well, which makes results comparable since both sets contain the same amount of training data. Note the radius $R$ could be determined by performing a validation step testing the corresponding robust solution for a certain set of radii. Note also that training NN does not necessarily give an optimal network, and repetitions can lead to different results. We train each neural network three times using 1000 epochs and use the network that gives the best loss value. When measuring training time, we add all three run times together.

We first present average training times and solution times in Table~\ref{exp2:tab2}. All values are in seconds. For training times, both methods require comparably little effort, with average times for NN ranging from 7 to 28 seconds, while times for Kernel are in the lower range of 1 to 8 seconds. Training times of both methods primarily depend on the sample size $m$, with problem dimension $N$ and the data type having less influence. Note that NN gives a neural network that can be used in combination with any radius $R$, while Kernel needs to be reoptimized for different values of $\nu$. While we keep these values fixed in this experiment, this affects training times if many quantiles need to be tested.

\begin{table}[htbp]
\begin{center}
\begin{tabular}{rrr|rr|rr}
 &  & & \multicolumn{2}{c|}{Train} & \multicolumn{2}{c}{Solve} \\
Type & $N$ & $m$ &  NN & Kernel & NN &  Kernel \\
\hline
\parbox[t]{2mm}{\multirow{9}{*}{\rotatebox[origin=c]{90}{Gaussian}}} & 10 & 250 & 8.2 & 0.9 & 26.7 & 0.1 \\
 & 10 & 500 & 16.4 & 2.2 & 22.9 & 0.2 \\
 & 10 & 1000 & 23.7 & 7.9 & 34.4 & 0.3 \\
 & 20 & 250 & 8.5 & 0.9 & 64.3 & 0.3 \\
 & 20 & 500 & 11.6 & 2.2 & 102.9 & 0.5 \\
 & 20 & 1000 & 19.6 & 8.1 & 120.9 & 1.1 \\
 & 40 & 250 & 8.9 & 1.0 & 151.0 & 1.2 \\
 & 40 & 500 & 19.1 & 2.3 & 280.8 & 2.0 \\
 & 40 & 1000 & 23.7 & 8.0 & 596.7 & 3.8 \\
\hline
\parbox[t]{2mm}{\multirow{9}{*}{\rotatebox[origin=c]{90}{Mixed Gaussian}}} & 10 & 250 & 8.1 & 0.9 & 48.6 & 0.1 \\
 & 10 & 500 & 15.0 & 2.2 & 91.4 & 0.2 \\
 & 10 & 1000 & 24.9 & 8.0 & 88.2 & 0.3 \\
 & 20 & 250 & 8.0 & 0.9 & 171.1 & 0.3 \\
 & 20 & 500 & 12.1 & 2.2 & 244.8 & 0.5 \\
 & 20 & 1000 & 21.0 & 8.0 & 295.0 & 1.1 \\
 & 40 & 250 & 10.2 & 0.9 & 470.3 & 1.2 \\
 & 40 & 500 & 20.2 & 2.2 & 820.9 & 2.0 \\
 & 40 & 1000 & 28.6 & 8.0 & 1117.2 & 3.9 \\
\hline
\parbox[t]{2mm}{\multirow{9}{*}{\rotatebox[origin=c]{90}{Polyhedral}}} & 10 & 250 & 8.3 & 0.9 & 31.9 & 0.1 \\
 & 10 & 500 & 14.6 & 2.2 & 46.1 & 0.2 \\
 & 10 & 1000 & 23.7 & 8.0 & 47.1 & 0.3 \\
 & 20 & 250 & 10.1 & 0.9 & 58.4 & 0.3 \\
 & 20 & 500 & 13.0 & 2.2 & 103.0 & 0.5 \\
 & 20 & 1000 & 19.1 & 7.8 & 329.9 & 1.1 \\
 & 40 & 250 & 10.6 & 0.9 & 236.9 & 1.3 \\
 & 40 & 500 & 14.6 & 2.3 & 523.3 & 2.1 \\
 & 40 & 1000 & 25.7 & 8.0 & 730.8 & 3.9 \\
\end{tabular}
\caption{Average solution and training times in seconds for Problem (Obj).}\label{exp2:tab2}
\end{center}
\end{table}

For solution times, differences between NN and Kernel become more pronounced. While Kernel only needs to solve a single linear program, our approach needs to generate scenarios iteratively. Hence, solving NN is orders of magnitude slower than solving Kernel.

\begin{table}[htbp]
\begin{center}
\begin{tabular}{rrr|rrr|rrr}
 &  & & \multicolumn{3}{c|}{Avg} & \multicolumn{3}{c}{90\% Quantile} \\
Type & $N$ & $m$ & NN & Kernel & Gap & NN & Kernel  & Gap  \\
\hline
\parbox[t]{2mm}{\multirow{9}{*}{\rotatebox[origin=c]{90}{Gaussian}}} & 10 & 250 & 554.1 & 702.7 & 26.8 & 586.4 & 725.4 & 23.7 \\
 & 10 & 500 & 458.1 & 614.7 & 34.2 & 487.4 & 639.7 & 31.2 \\
 & 10 & 1000 & 410.4 & 536.3 & 30.7 & 443.2 & 565.3 & 27.6 \\
 & 20 & 250 & 907.1 & 1402.6 & 54.6 & 956.9 & 1434.7 & 49.9 \\
 & 20 & 500 & 1011.7 & 1409.8 & 39.3 & 1051.6 & 1440.1 & 36.9 \\
 & 20 & 1000 & 1029.6 & 1396.9 & 35.7 & 1080.4 & 1428.2 & 32.2 \\
 & 40 & 250 & 1814.1 & 2770.7 & 52.7 & 1865.2 & 2812.5 & 50.8 \\
 & 40 & 500 & 1921.7 & 2897.8 & 50.8 & 1980.9 & 2938.9 & 48.4 \\
 & 40 & 1000 & 1914.6 & 2875.3 & 50.2 & 1994.6 & 2914.2 & 46.1 \\
\hline
\parbox[t]{2mm}{\multirow{9}{*}{\rotatebox[origin=c]{90}{Mixes Gaussian}}} & 10 & 250 & 620.4 & 715.3 & 15.3 & 689.5 & 742.6 & 7.7 \\
 & 10 & 500 & 622.5 & 675.6 & 8.5 & 689.5 & 703.3 & 2.0 \\
 & 10 & 1000 & 642.5 & 666.9 & 3.8 & 696.9 & 700.3 & 0.5 \\
 & 20 & 250 & 1206.5 & 1437.8 & 19.2 & 1275.4 & 1471.3 & 15.4 \\
 & 20 & 500 & 1143.9 & 1413.4 & 23.6 & 1208.9 & 1445.7 & 19.6 \\
 & 20 & 1000 & 1134.4 & 1363.7 & 20.2 & 1206.1 & 1396.9 & 15.8 \\
 & 40 & 250 & 2407.6 & 2808.2 & 16.6 & 2497.8 & 2850.6 & 14.1 \\
 & 40 & 500 & 2408.0 & 2905.9 & 20.7 & 2502.7 & 2949.5 & 17.9 \\
 & 40 & 1000 & 2379.7 & 2874.5 & 20.8 & 2478.2 & 2915.6 & 17.6 \\
\hline
\parbox[t]{2mm}{\multirow{9}{*}{\rotatebox[origin=c]{90}{Polyhedral}}} & 10 & 250 & 546.1 & 686.4 & 25.7 & 592.7 & 721.2 & 21.7 \\
 & 10 & 500 & 578.7 & 687.5 & 18.8 & 624.9 & 725.0 & 16.0 \\
 & 10 & 1000 & 517.5 & 692.9 & 33.9 & 562.3 & 722.0 & 28.4 \\
 & 20 & 250 & 1103.6 & 1424.9 & 29.1 & 1171.0 & 1473.2 & 25.8 \\
 & 20 & 500 & 1087.9 & 1462.0 & 34.4 & 1150.6 & 1505.0 & 30.8 \\
 & 20 & 1000 & 1195.1 & 1469.3 & 23.0 & 1260.2 & 1513.9 & 20.1 \\
 & 40 & 250 & 2046.2 & 2879.7 & 40.7 & 2143.7 & 2948.6 & 37.6 \\
 & 40 & 500 & 2341.9 & 2969.7 & 26.8 & 2432.7 & 3045.3 & 25.2 \\
 & 40 & 1000 & 2078.6 & 2924.6 & 40.7 & 2178.7 & 2989.5 & 37.2
\end{tabular}
\caption{Average objective values of Problem (Obj). Smaller values indicate better performance.}\label{exp2:tab1}
\end{center}
\end{table}

With this disadvantage in mind, we now consider the quality of solutions as presented in Table~\ref{exp2:tab1}. We show the average of the average out-of-sample objective value and the average of the $90\%$-quantile out-of-sample objective value for both robust solutions derived by NN and Kernel. Recall that the aim is to minimize the objective value. In columns denoted ''Gap'', we show the relative increase of the average results for Kernel over NN.

We see that NN strongly outperforms Kernel on all combinations of instance type, problem dimension, and sample size, with respect to both average and quantile performance. In most cases, even the 90\% objective quantile of NN is smaller than the average objective value of Kernel. Gaps on average objective values range from 26\% to 54\% for Gaussian data (i.e., the Kernel solution can be 54\% more costly on average when evaluated out-of-sample), from 3\% to 23\% for Mixed Gaussian data, and from 18\% to 40\% for Polyhedral data. There is no clear trend in performance gaps when considering sample size $m$, but gaps tend to become larger with increasing problem dimension $N$, which indicates better scaling capabilities of our method.

Summarizing these findings, we see that NN solutions require more computational effort to find, but have the benefit of a considerably stronger performance.

\subsubsection{Results for Constraint Uncertainty}

We now consider the performance when solving problems of type (Feas) with $N=20$ and $m=500$. In this experiment, we generate solutions for different degrees of robustness (controlled through parameters $\nu$ and $R$). We consider the cases $(1-\nu)\in \{0.02, 0.04, 0.08, 0.16, 0.32, 0.64\}$, and use the same quantile values for $R$.

Each solution is evaluated with respect to its performance in the objective function $\sum_{i=1}^N x_i$ and with respect to its feasibility. To have a detailed understanding of the latter, we evaluate $c^\top x$ out-of-sample and consider the 90\% quantile of these values, i.e., we calculate how small $\text{RHS}^{feas}$ can be such that the solution remains feasible in 90\% of scenarios. When optimizing, we use $\text{RHS}^{feas}=50N=1000$. We solve each of the 10 instances of each data type and then average these performance values for each parameter value $\nu$ and $R$, respectively.

Figure~\ref{exp2:fig2} shows the resulting average trade-off curves for each data type. Each curve consists of six points, corresponding to the different degrees of robustness when calculating solutions. Points towards the right of the plot have better quality with respect to the objective function. Points towards the bottom of the plot are more robust, as they remain $90\%$ feasible for smaller right-hand sides, i.e., plan in more buffer in the uncertain constraint. An ideal solution would be in the bottom right corner, and a reasonable trade-off curve would go from top right to bottom left.

\begin{figure}[htb]
\begin{center}
\subfigure[Gaussian.]{\includegraphics[width=0.3\textwidth]{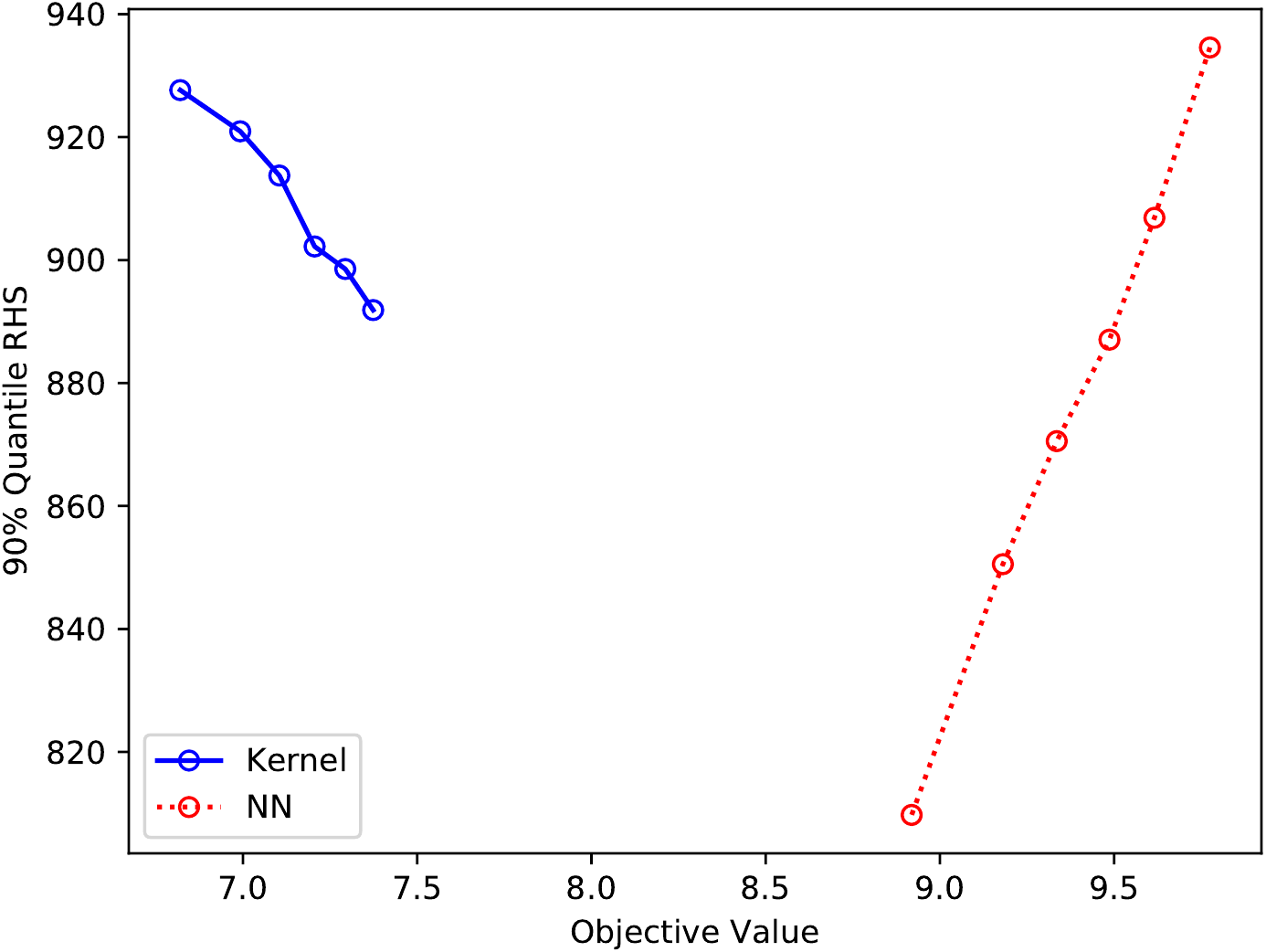}}%
\hfill%
\subfigure[Mixed Gaussian.]{\includegraphics[width=0.3\textwidth]{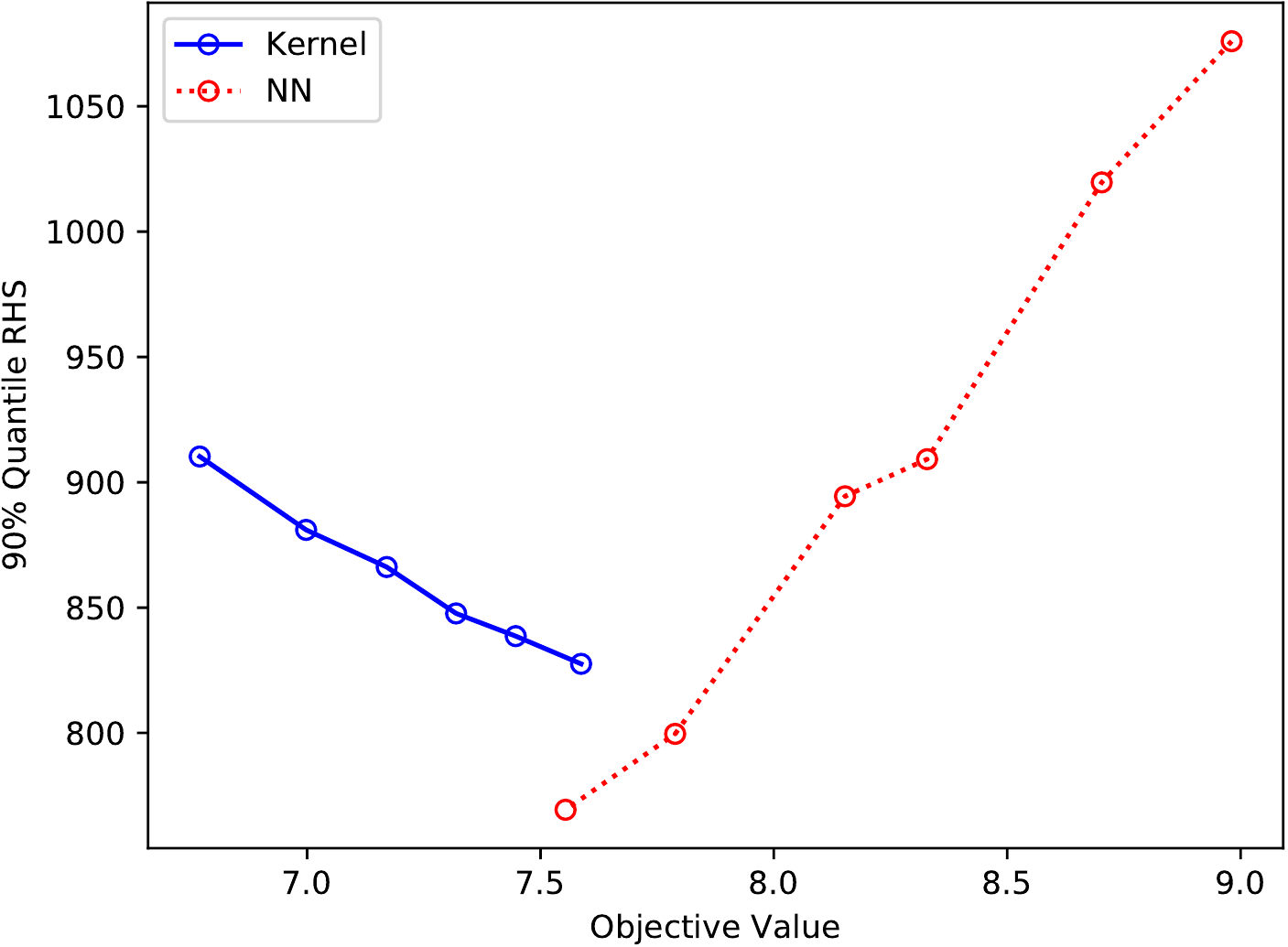}}%
\hfill%
\subfigure[Polyhedral.]{\includegraphics[width=0.3\textwidth]{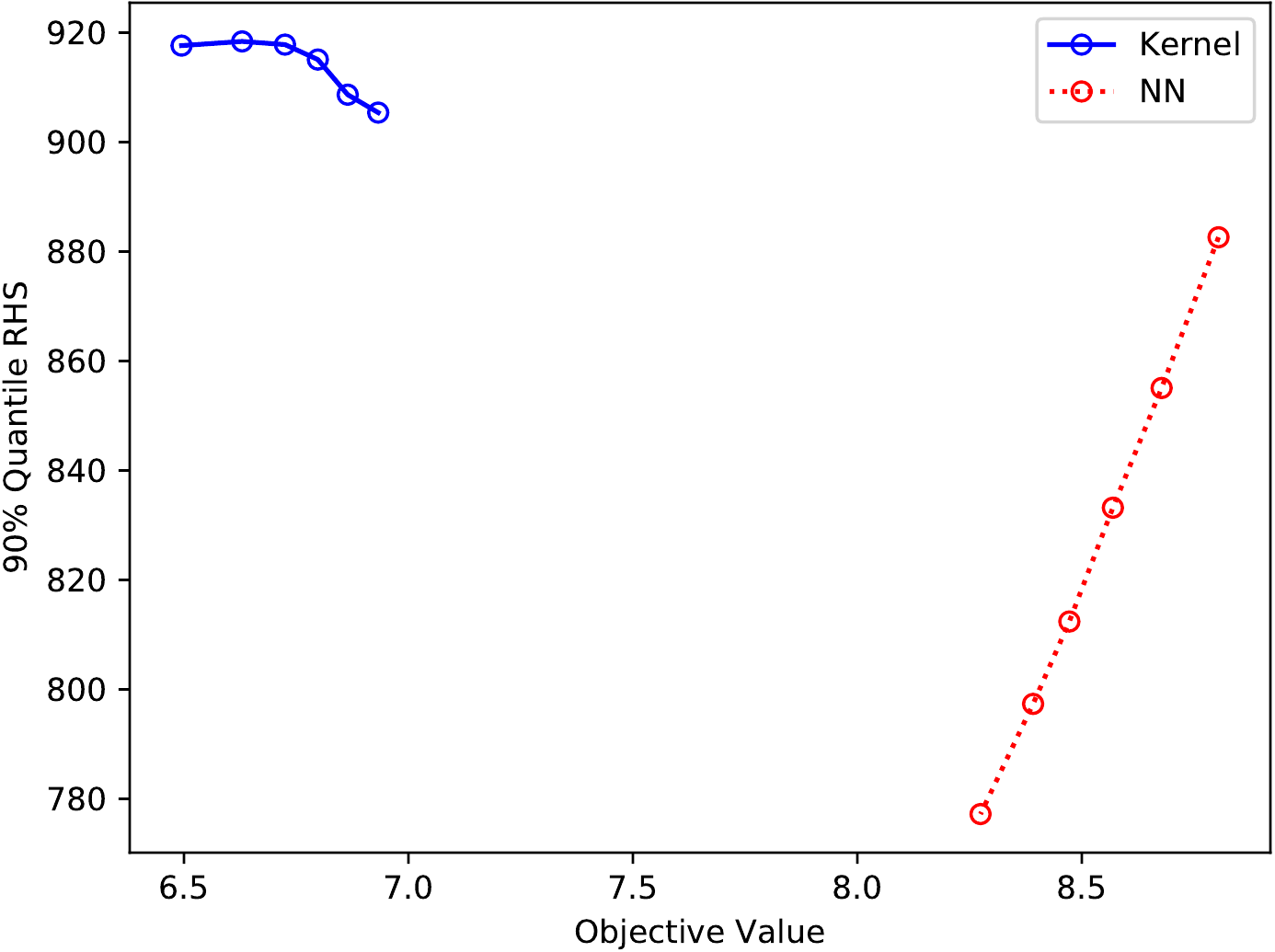}}
\caption{Robustness vs objective value in Problem (Feas) for $N=20$, $m=500$. Values to the bottom right indicate better performance.}\label{exp2:fig2}
\end{center}
\end{figure}

Comparing Kernel and NN solutions, we find that NN shows a considerably better performance being able to produce solutions which at the same time reach larger buffers in the uncertain constraint while simultaneously reaching consistently larger objective values. Furthermore, NN solutions give a useful trade-off between robustness and objective value (curves are diagonal from bottom left to top right), while this is not the case for Kernel solutions. Here, for all three data sets, solutions calculated with different values of $\nu$ are dominated by the smallest set $U_\nu$ with $1-\nu=0.02$, which gives both better objective values and better robustness than other values of $\nu$. This may mean that the corresponding uncertainty sets do not offer protection towards the relevant scenarios.

\subsection{Experiment 2: Real-World Data Experiments}

While the results of the first experiment demonstrate strong dominance of NN solutions over Kernel solutions, instances were randomly generated and are thus not necessarily representative for real-world data. For this reason as a second experiment we solve the shortest path problem with travel time uncertainty using the travel time data of the City of Chicago that was first collected and used in \cite{chassein2019algorithms}. Travel times for each arc in the Chicago network were collected from March to May 2017 through a live traffic data interface. The whole network has 1308 arcs and 538 nodes, but for this experiment, we only use the inner city region containing 189 arcs and 165 nodes. Furthermore, we use scenarios representing weekend travel times. There is a total of 1141 such observations (each observation giving the travel time for every arc in the network), of which we use 570 for training and 571 for testing.

Due to the way the graph is generated, there exist edges that are completely correlated. This happens as original travel speed observations were collected on segments that may contain multiple edges. As the covariance matrix becomes degenerate in this case, small random noise was added to the travel times to calculate the Kernel sets. As noted below, this effect seems to have considerable impact on both training and solution times for Kernel. Note that the covariance matrix is not used in the training process of NN. The neural network we used has three layers of size $189\times 50$, $50 \times 50$ and $50\times 50$, respectively. 


Since for the larger data set the kernel method failed to terminate in reasonable time we restricted the experiment to the city center graph and weekend scenarios. Furthermore, we used a $95\%$ quantile instead of $90\%$ as in the previous experiment with objective uncertainty, as this further reduces the size of the optimization model.

Additionally, we include a robust optimization approach where training data is used without any further modification as a discrete uncertainty set which we refer to as \textit{Discrete}. Note that since optimize a linear function over the uncertainty set this approach is equivalent to selecting the convex hull of the training data as uncertainty set. This method was omitted in the previous experiment since it does not filter out outliers and thus cannot be expected to perform competitively.

The robust optimization problem we consider here is to find a shortest path over the uncertain data where the uncertainty is only present in the objective function. We generate ten random source-sink pairs, where we ensure that at least eight arcs need to be traversed from source to sink to exclude pairs that are nearby. For each source-sink pair, a robust path is calculated using Discrete, NN and Kernel. On average after training, Kernel paths take 1497.7 seconds to compute, while NN paths take 794.8 seconds. The discrete scenario approach takes only $4.4$ seconds to be solved on average.

We present results on this experiment in Figures~\ref{exp3:figSP1}-\ref{exp3:figSP3}. In each figure, we show the path computed by Kernel on the left and the path computed by NN on the right for an exemplary instance. In the middle, we show the out-of-sample histogram of travel times for all paths, including Discrete.

\begin{figure}[htbp]
\begin{center}
\subfigure[Kernel path 01.]{\includegraphics[width=0.27\textwidth]{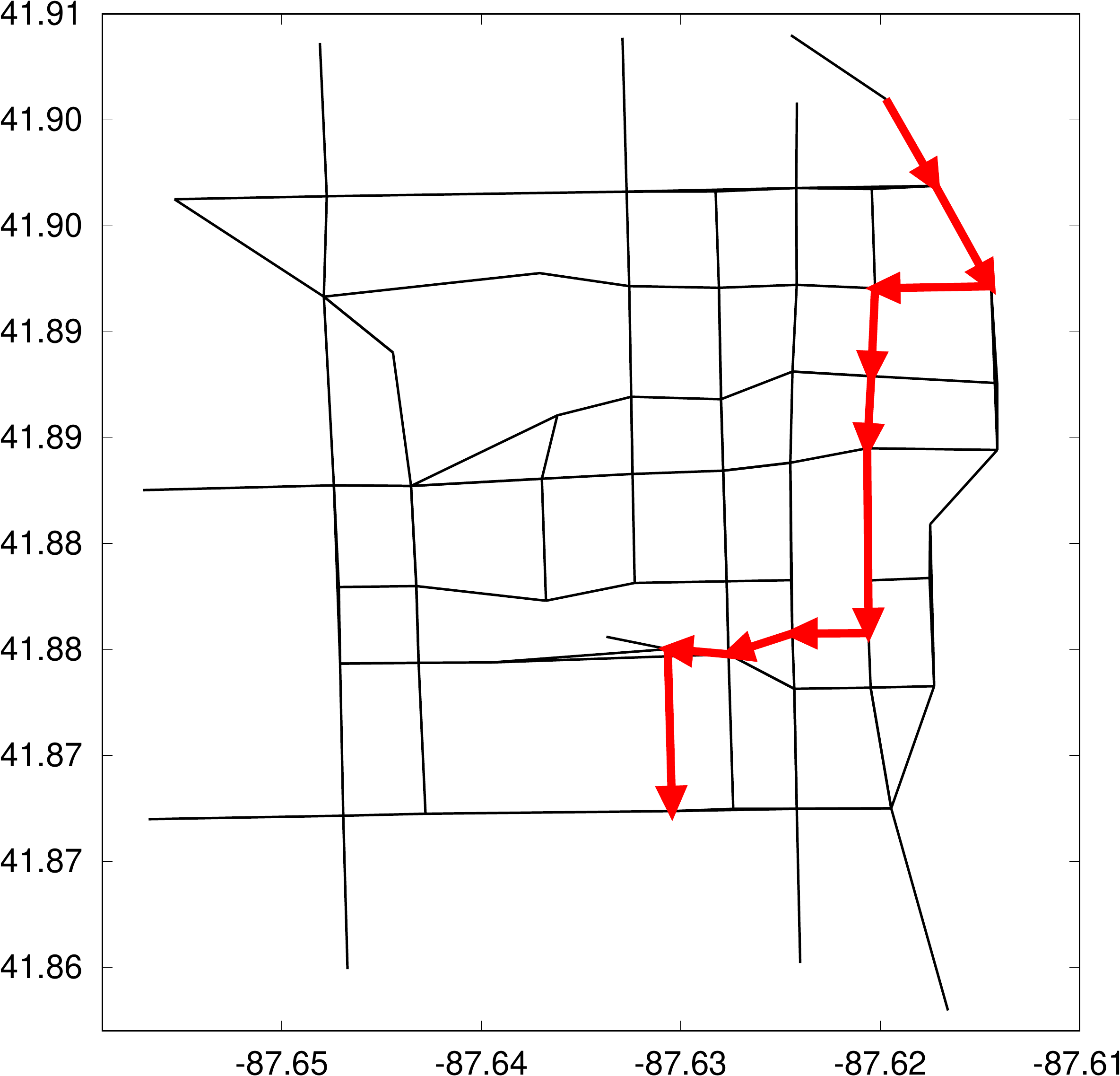}}%
\hspace{0.04\textwidth}%
\subfigure[Out-of-sample path lengths 01.\label{histo:01}]{\includegraphics[width=0.36\textwidth]{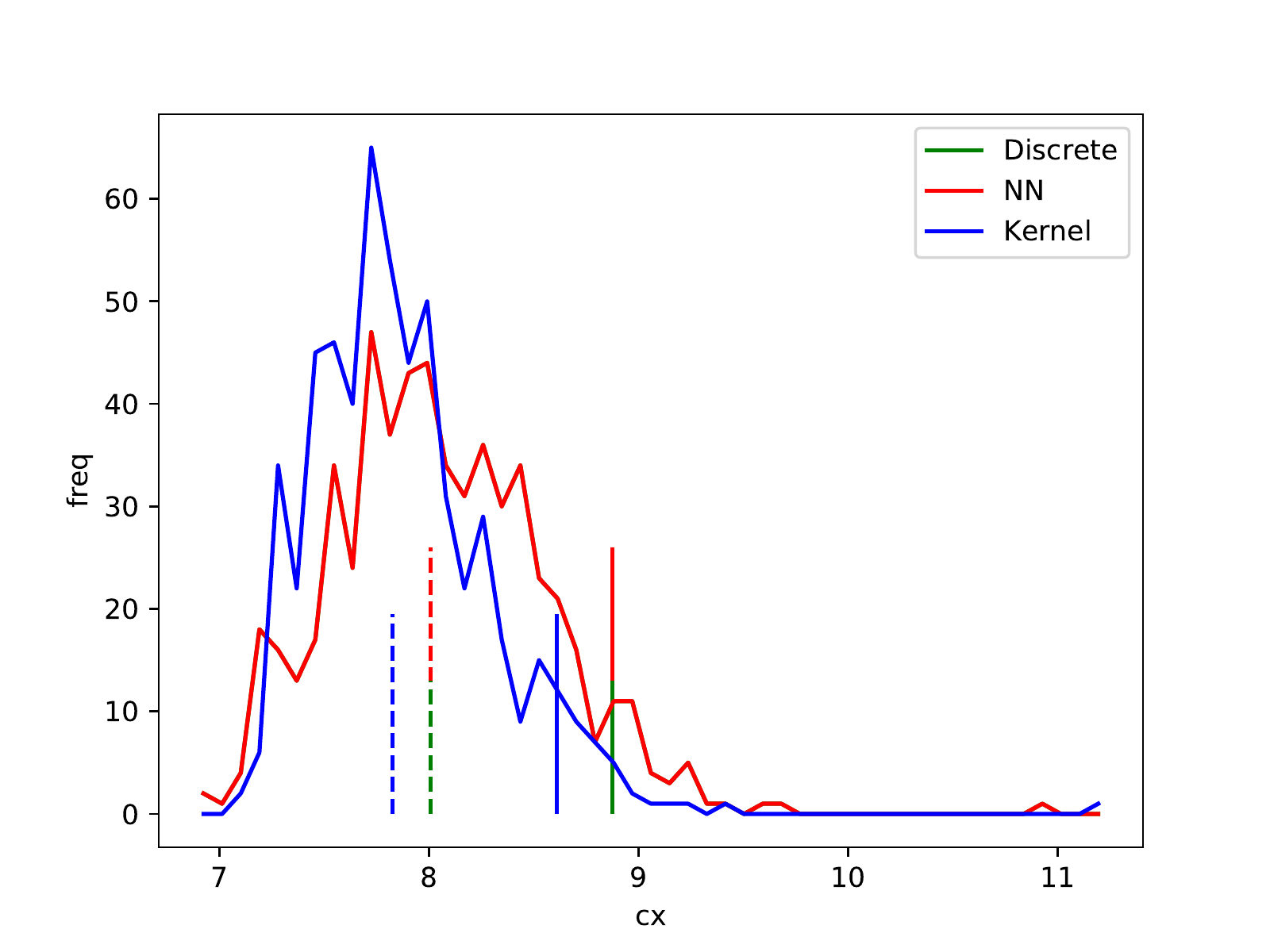}}%
\hspace{0.04\textwidth}%
\subfigure[NN path 01.]{\includegraphics[width=0.27\textwidth]{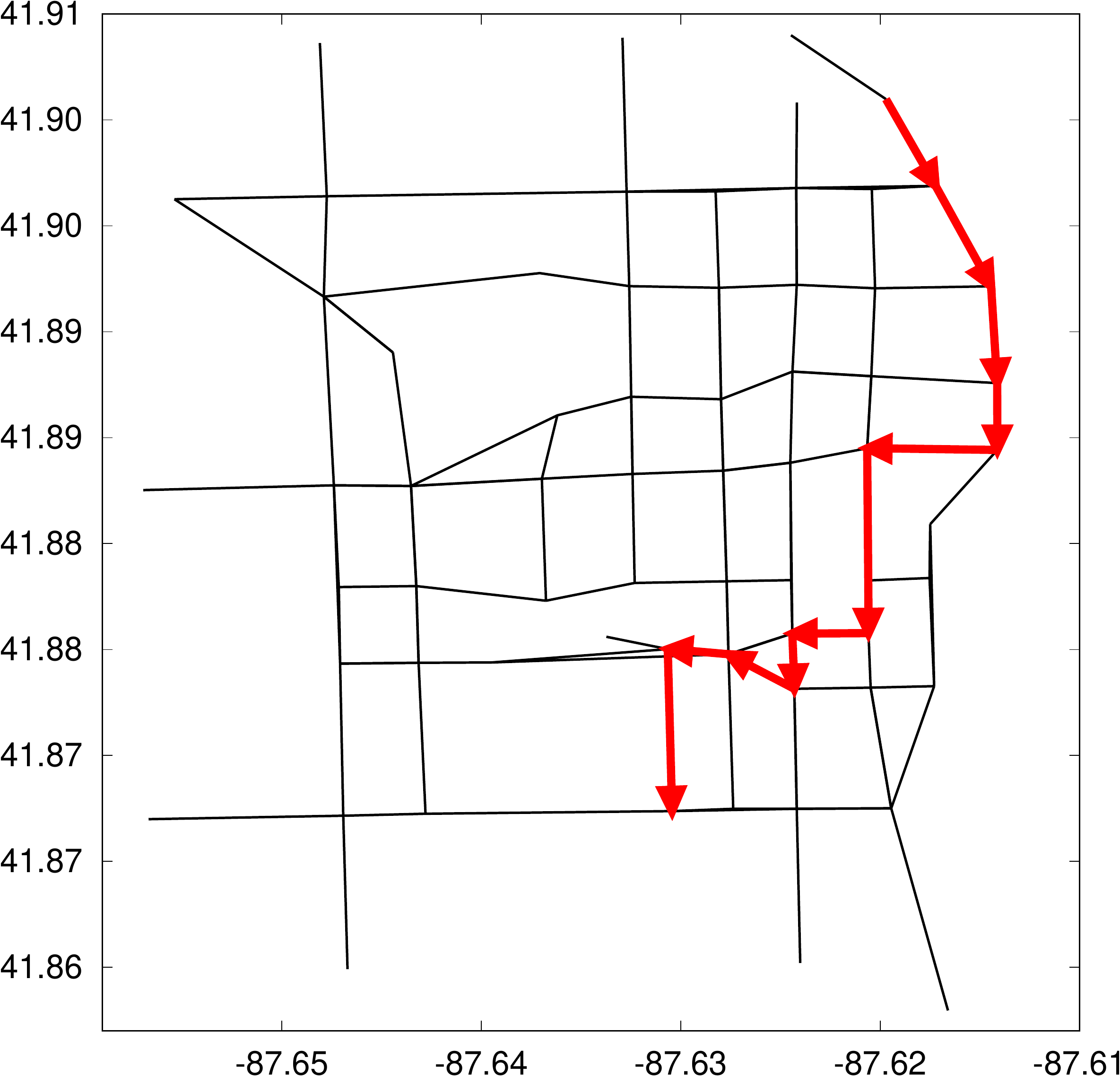}}
\subfigure[Kernel path 02.]{\includegraphics[width=0.27\textwidth]{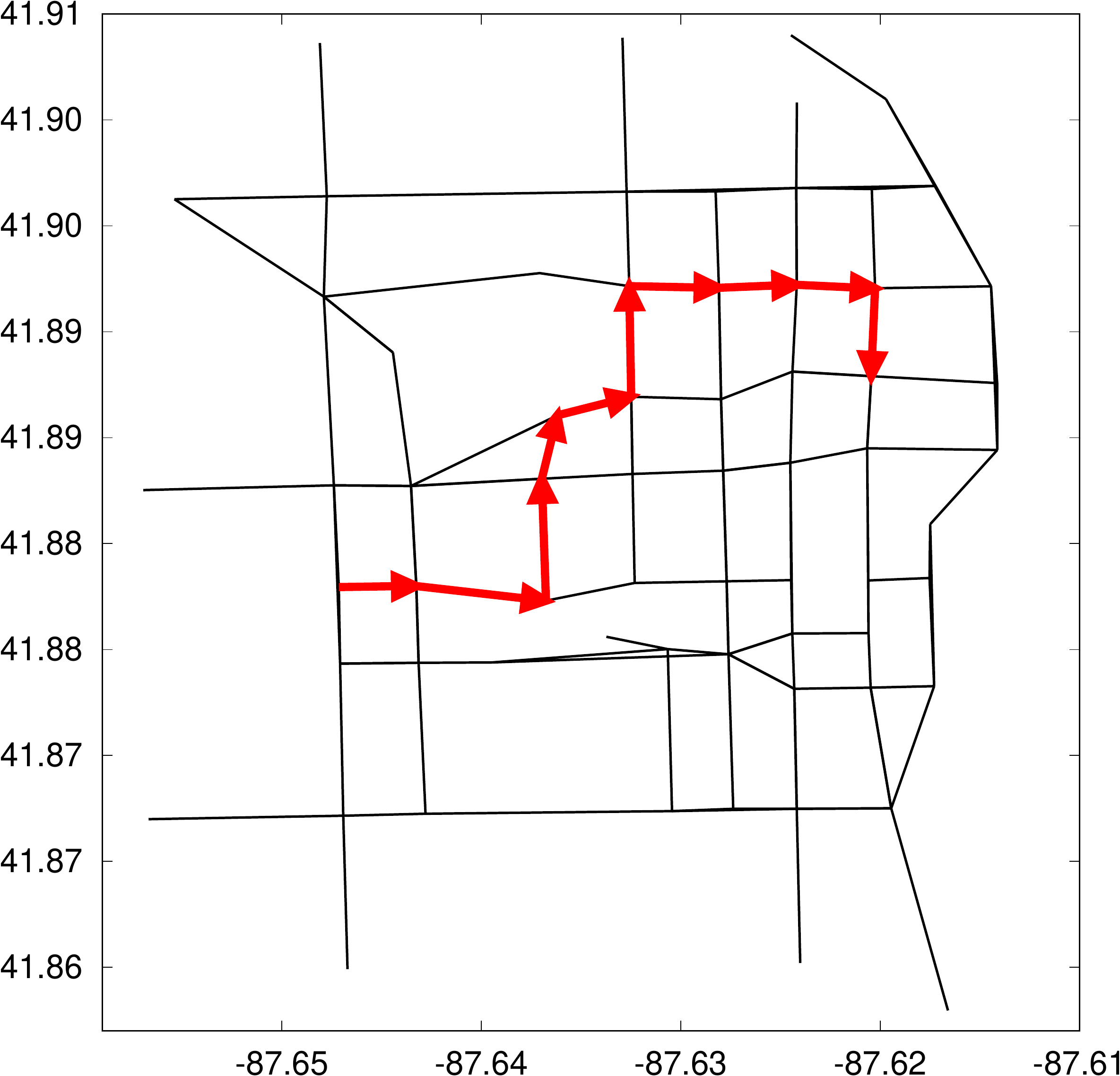}}%
\hspace{0.04\textwidth}%
\subfigure[Out-of-sample path lengths 02.]{\includegraphics[width=0.36\textwidth]{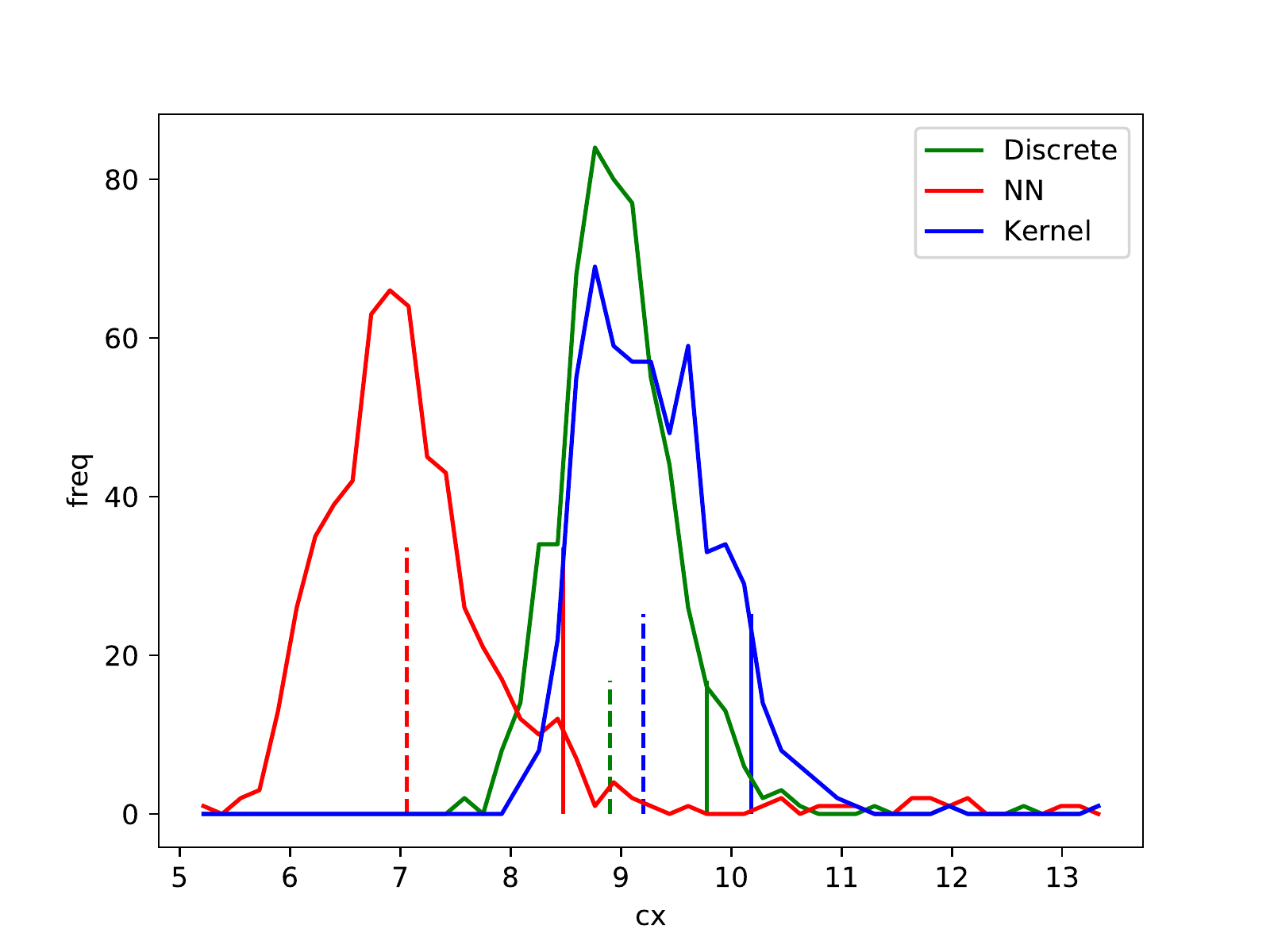}}%
\hspace{0.04\textwidth}%
\subfigure[NN path 02.]{\includegraphics[width=0.27\textwidth]{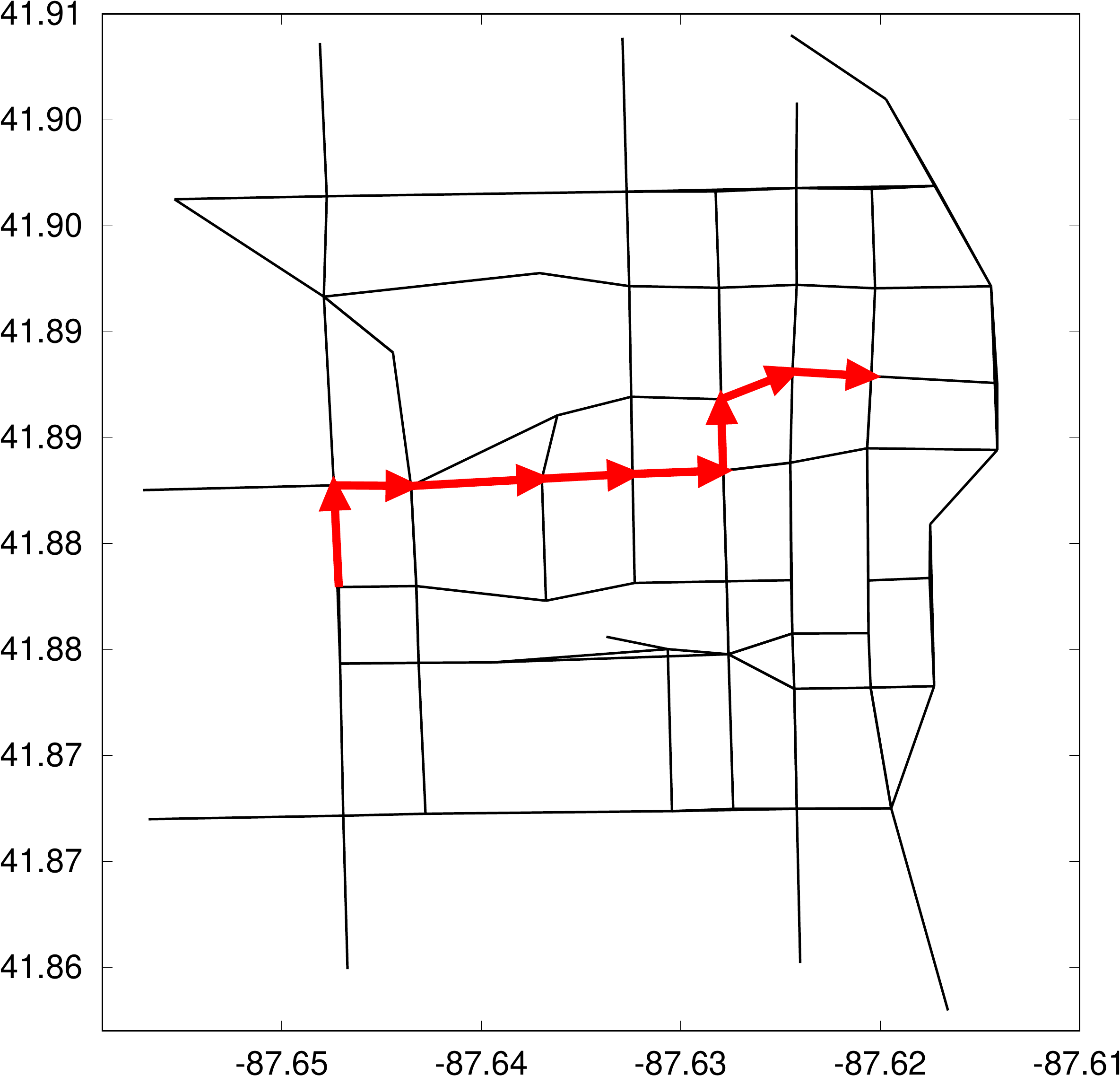}}
\subfigure[Kernel path 03.]{\includegraphics[width=0.27\textwidth]{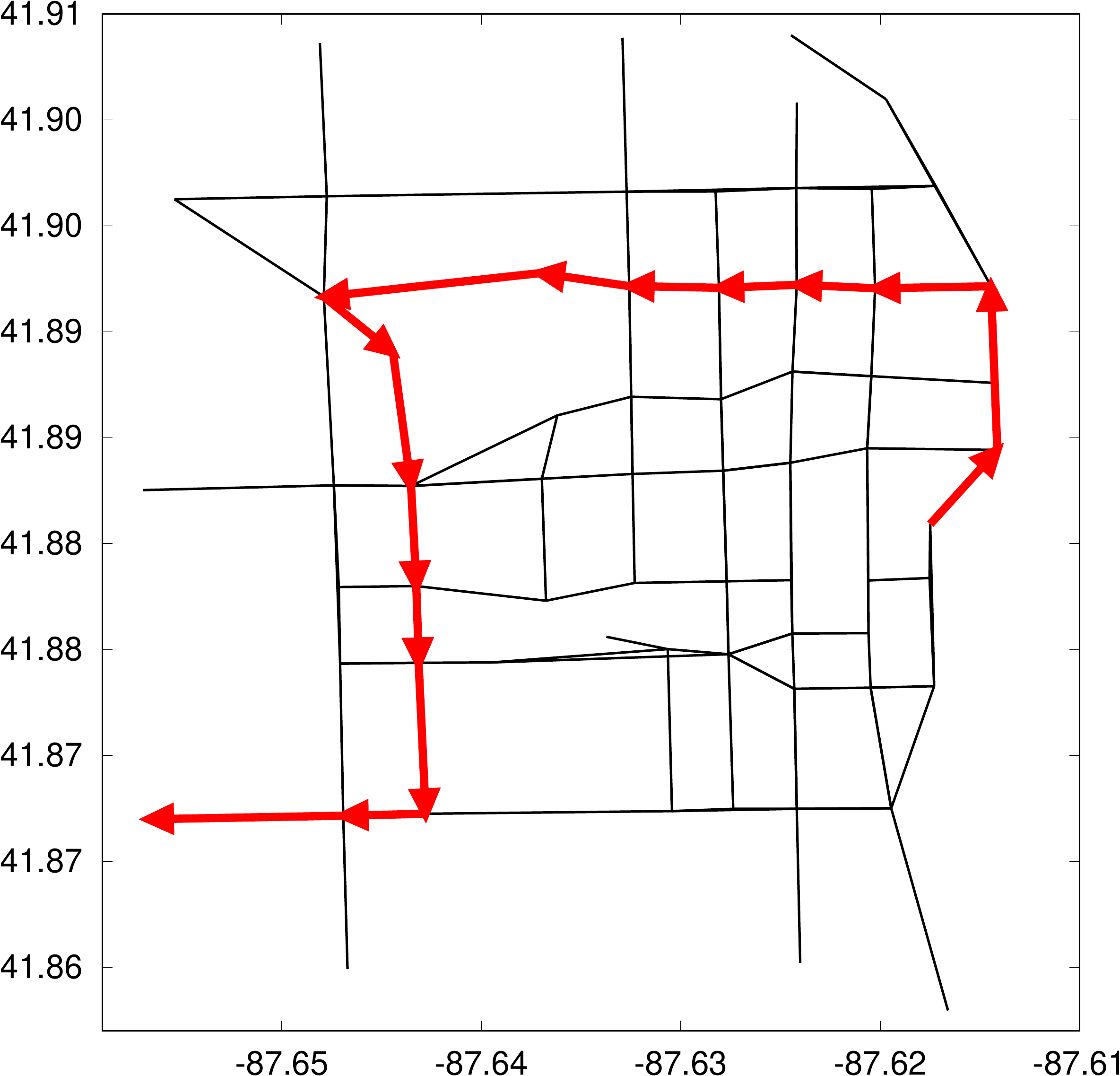}}%
\hspace{0.04\textwidth}%
\subfigure[Out-of-sample path lengths 03.]{\includegraphics[width=0.36\textwidth]{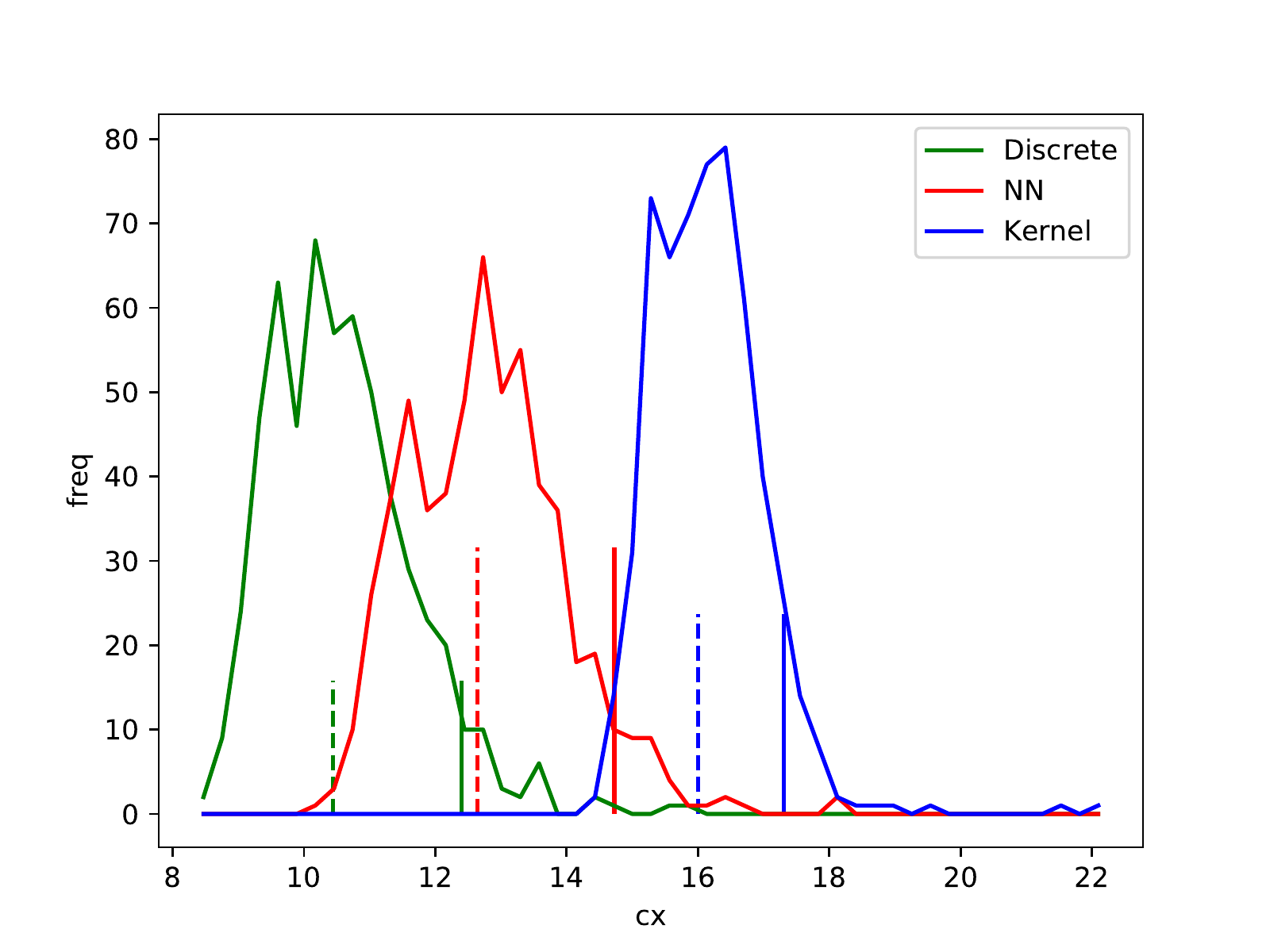}}%
\hspace{0.04\textwidth}%
\subfigure[NN path 03.]{\includegraphics[width=0.27\textwidth]{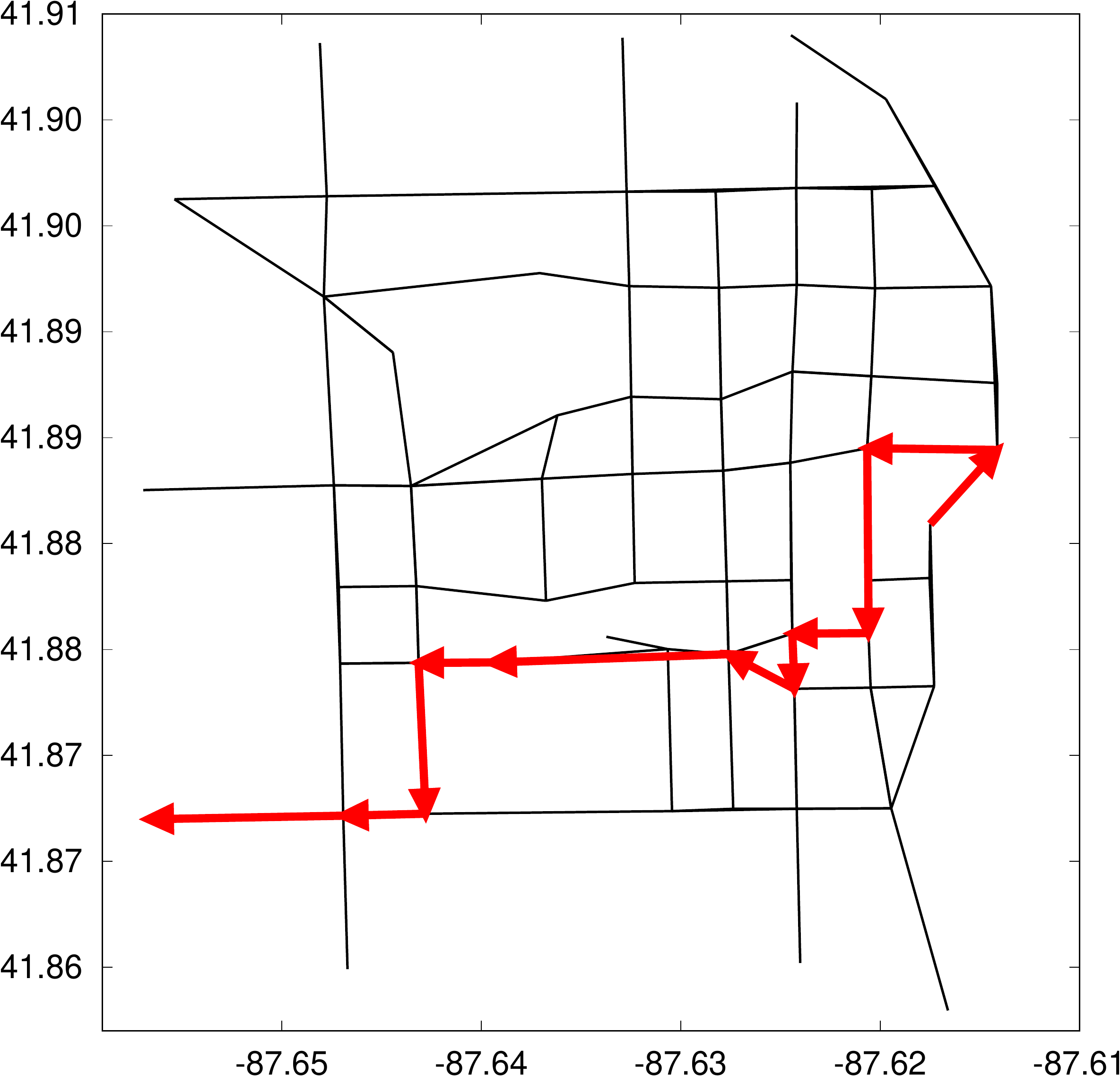}}
\subfigure[Kernel path 04.]{\includegraphics[width=0.27\textwidth]{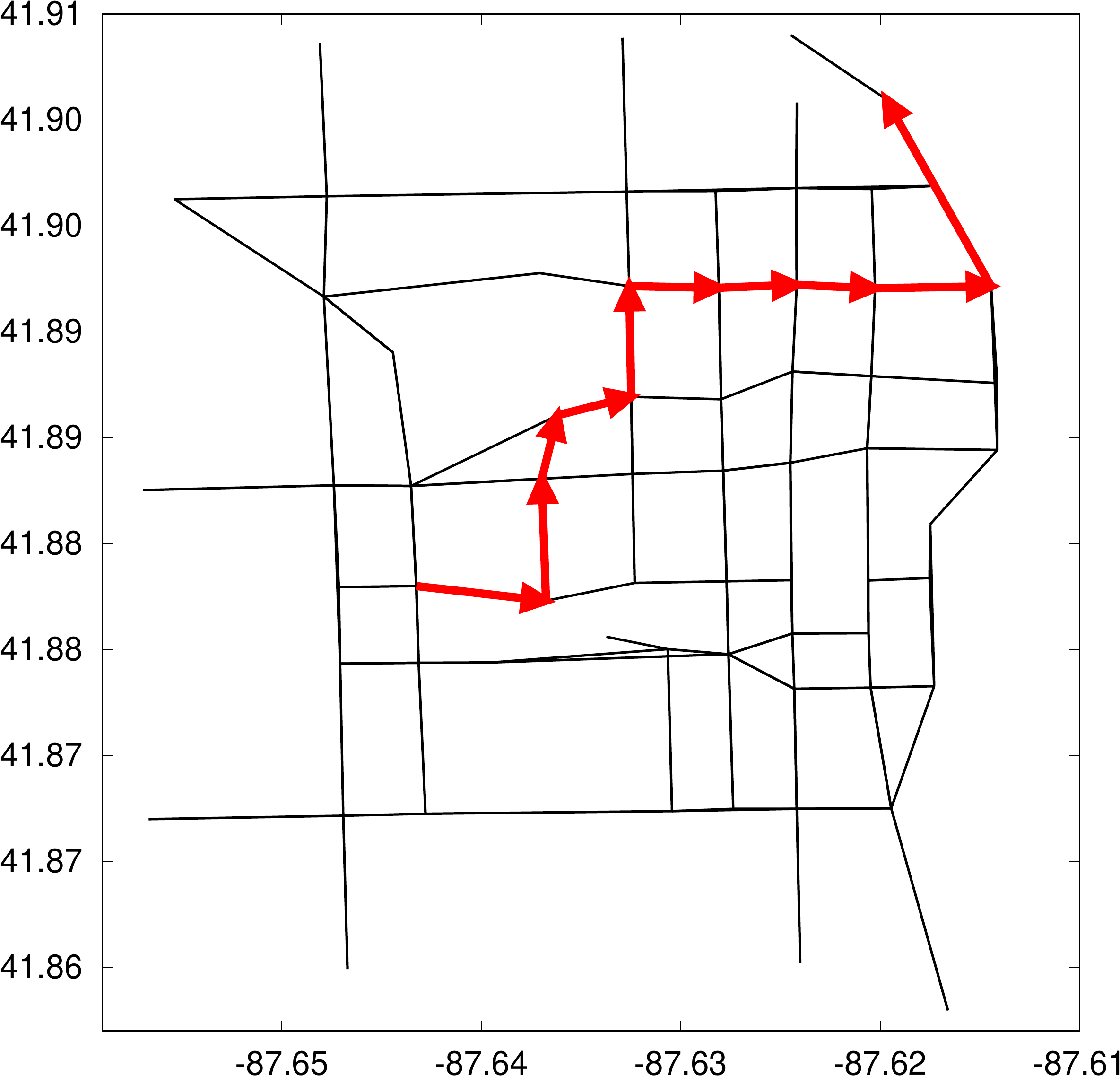}}%
\hspace{0.04\textwidth}%
\subfigure[Out-of-sample path lengths 04.]{\includegraphics[width=0.36\textwidth]{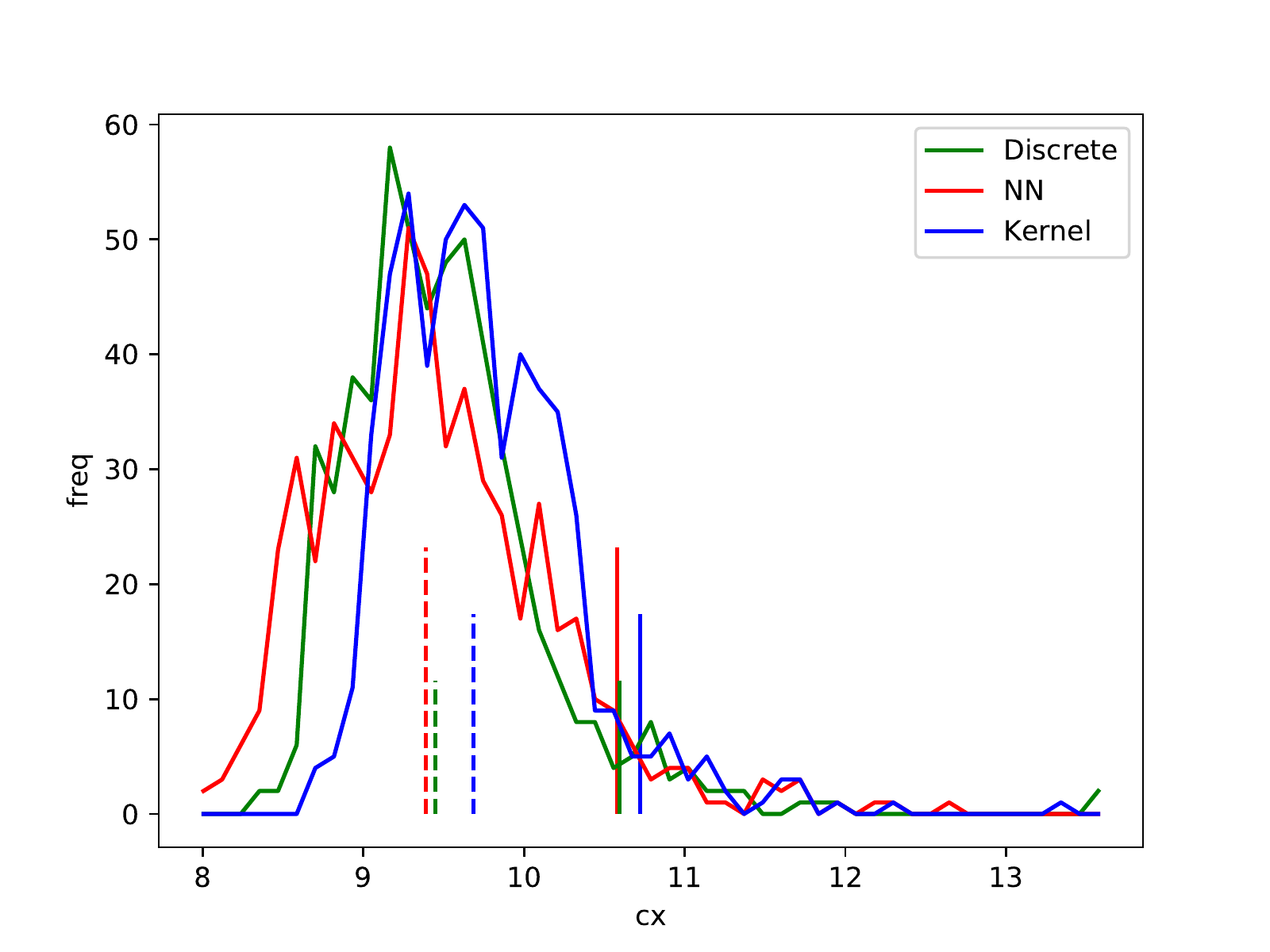}}%
\hspace{0.04\textwidth}%
\subfigure[NN path 04.]{\includegraphics[width=0.27\textwidth]{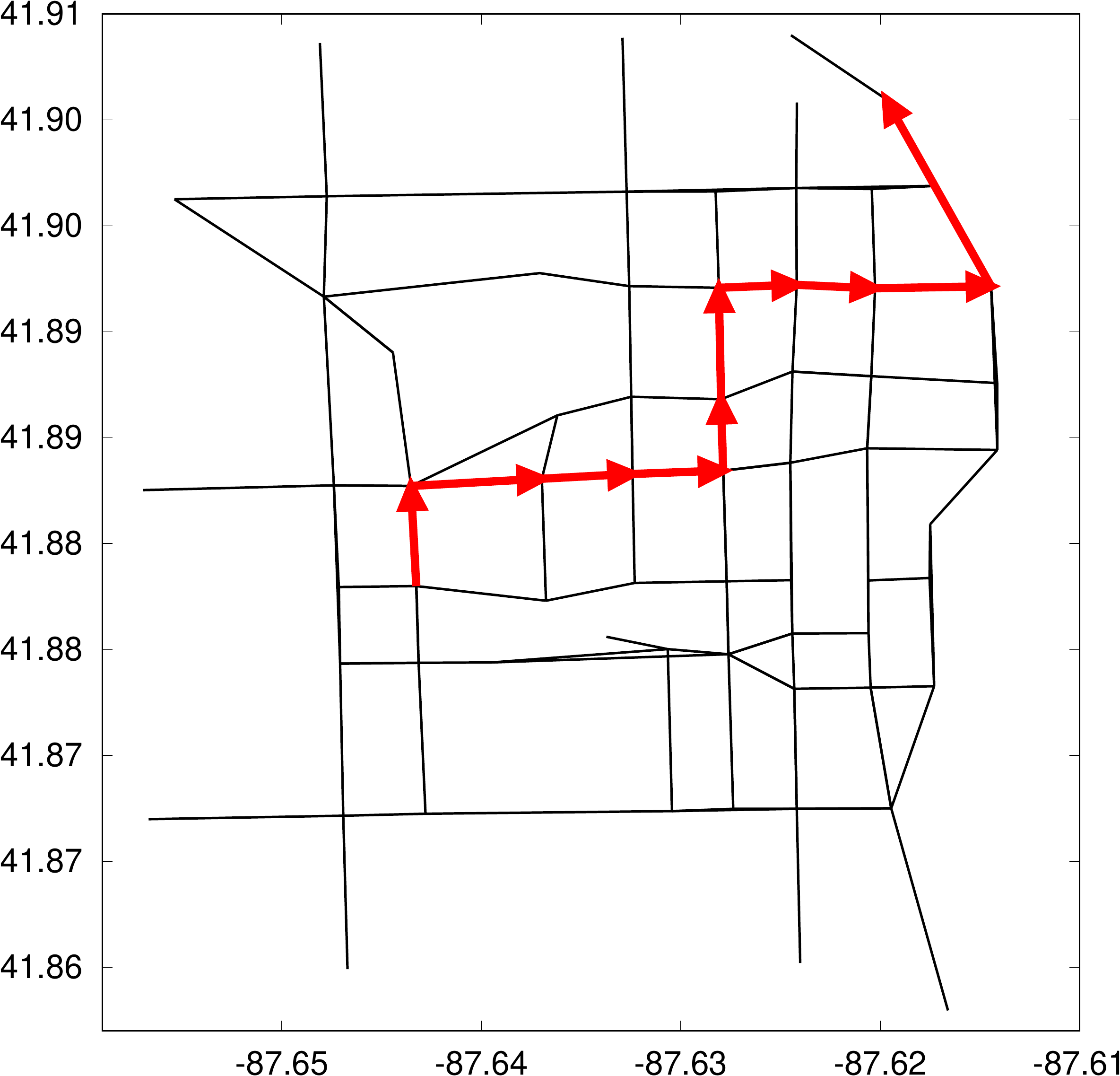}}
\caption{Comparison of paths 1-4 in experiment 3. In Figure~\ref{histo:01}, red line is on top of green line.}\label{exp3:figSP1}
\end{center}
\end{figure}

\begin{figure}[htbp]
\begin{center}
\subfigure[Kernel path 05.]{\includegraphics[width=0.27\textwidth]{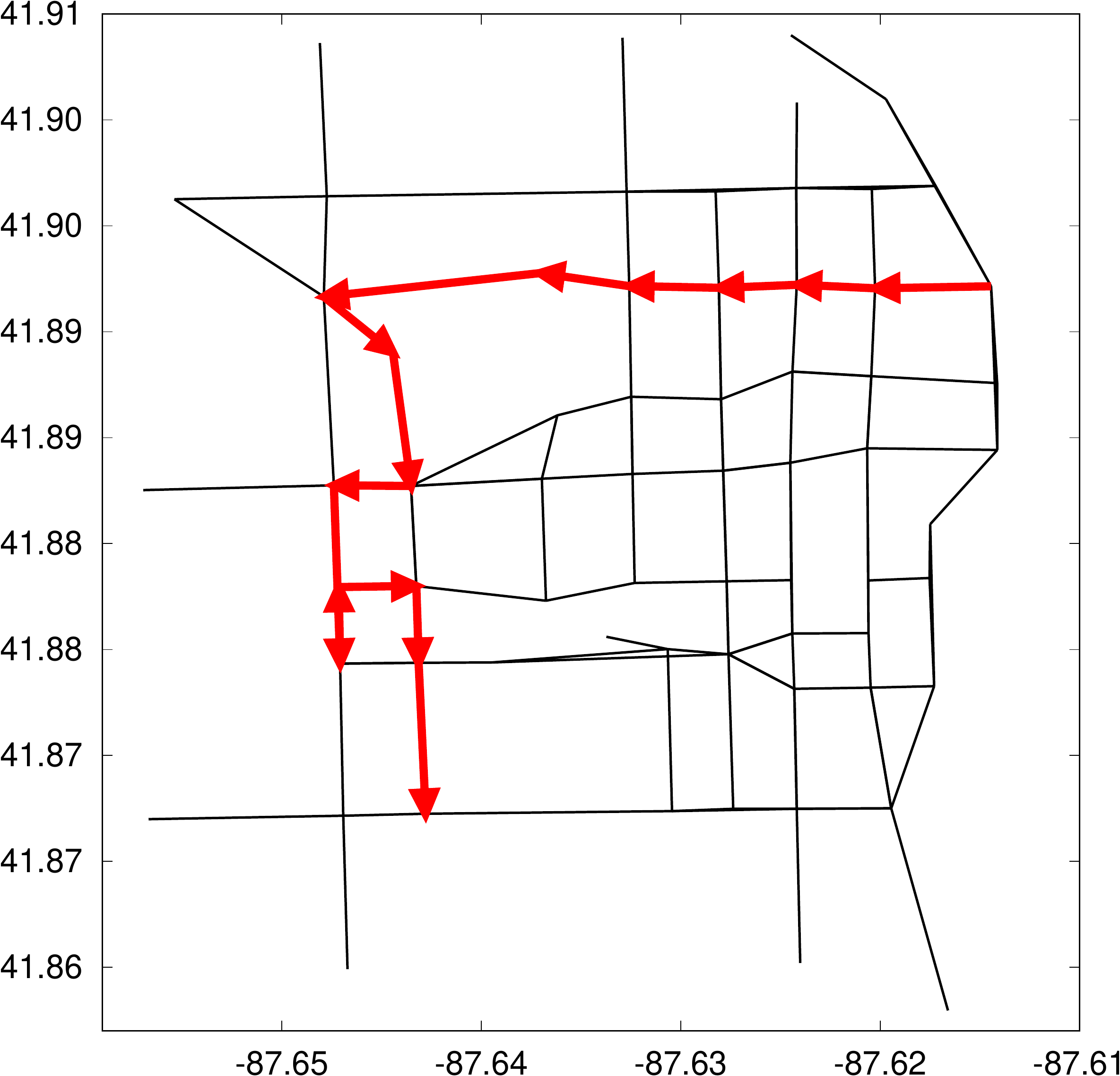}}%
\hspace{0.04\textwidth}%
\subfigure[Out-of-sample path lengths 05.]{\includegraphics[width=0.36\textwidth]{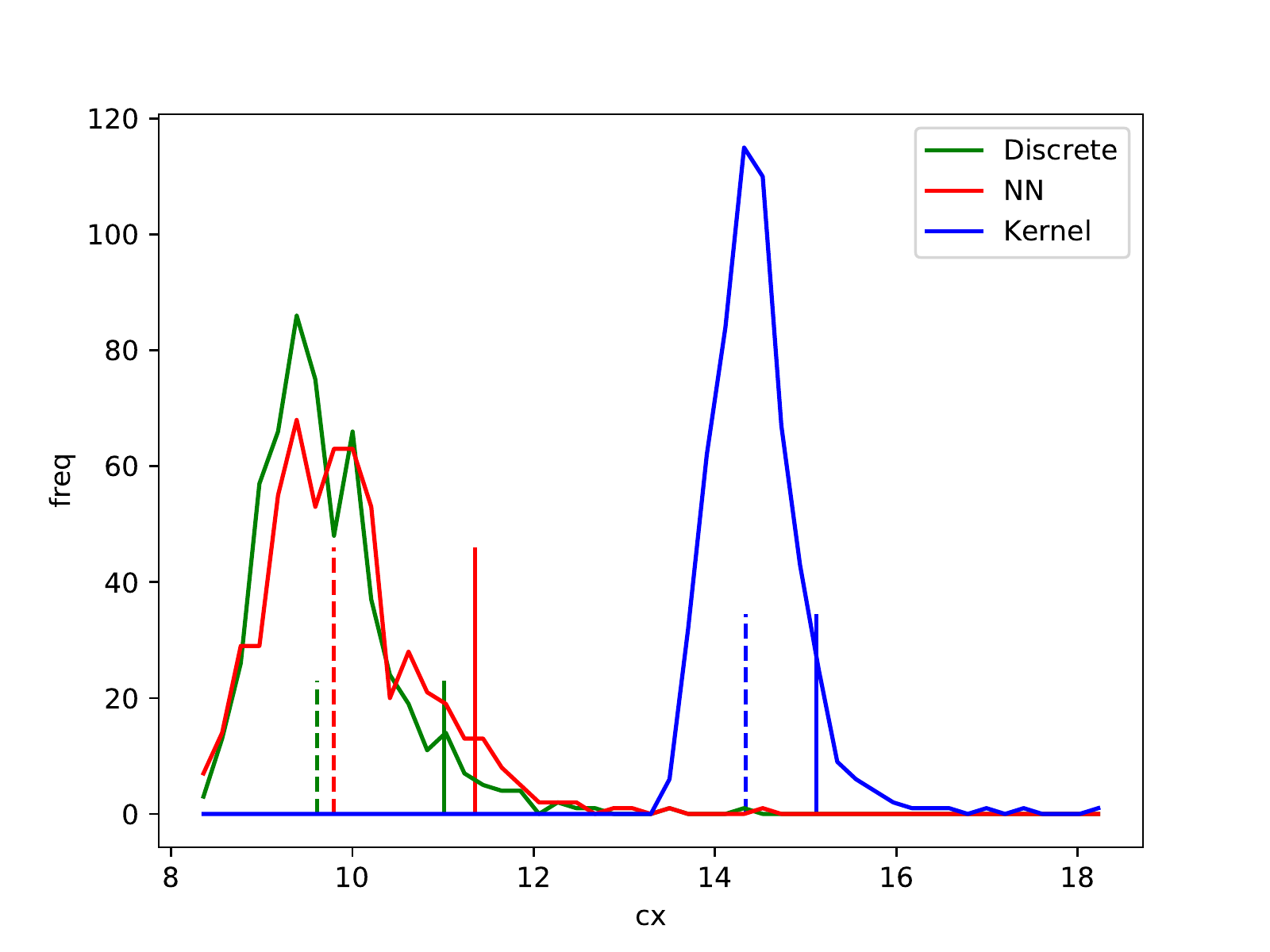}}%
\hspace{0.04\textwidth}%
\subfigure[NN path 05.]{\includegraphics[width=0.27\textwidth]{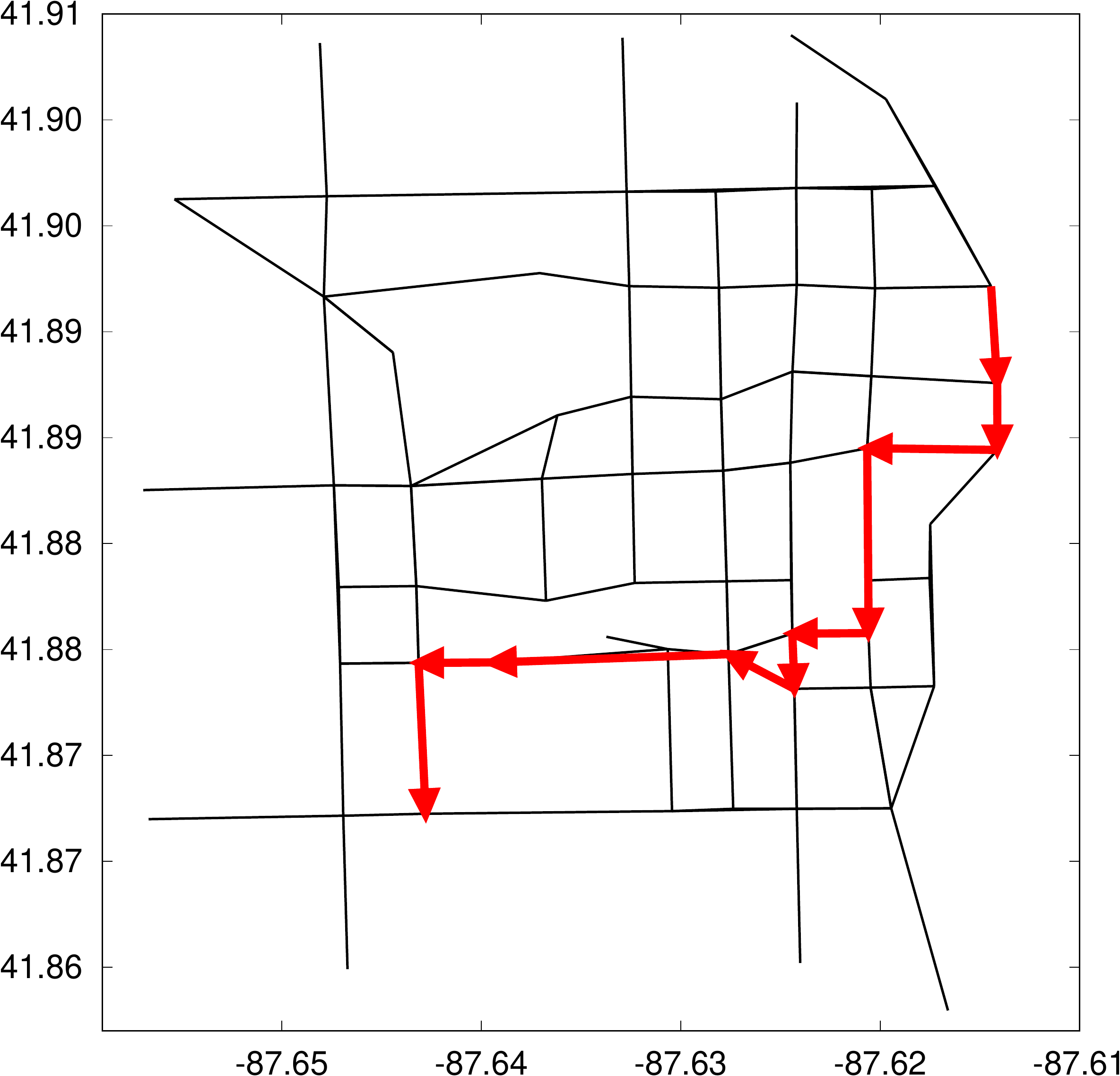}}
\subfigure[Kernel path 06.]{\includegraphics[width=0.27\textwidth]{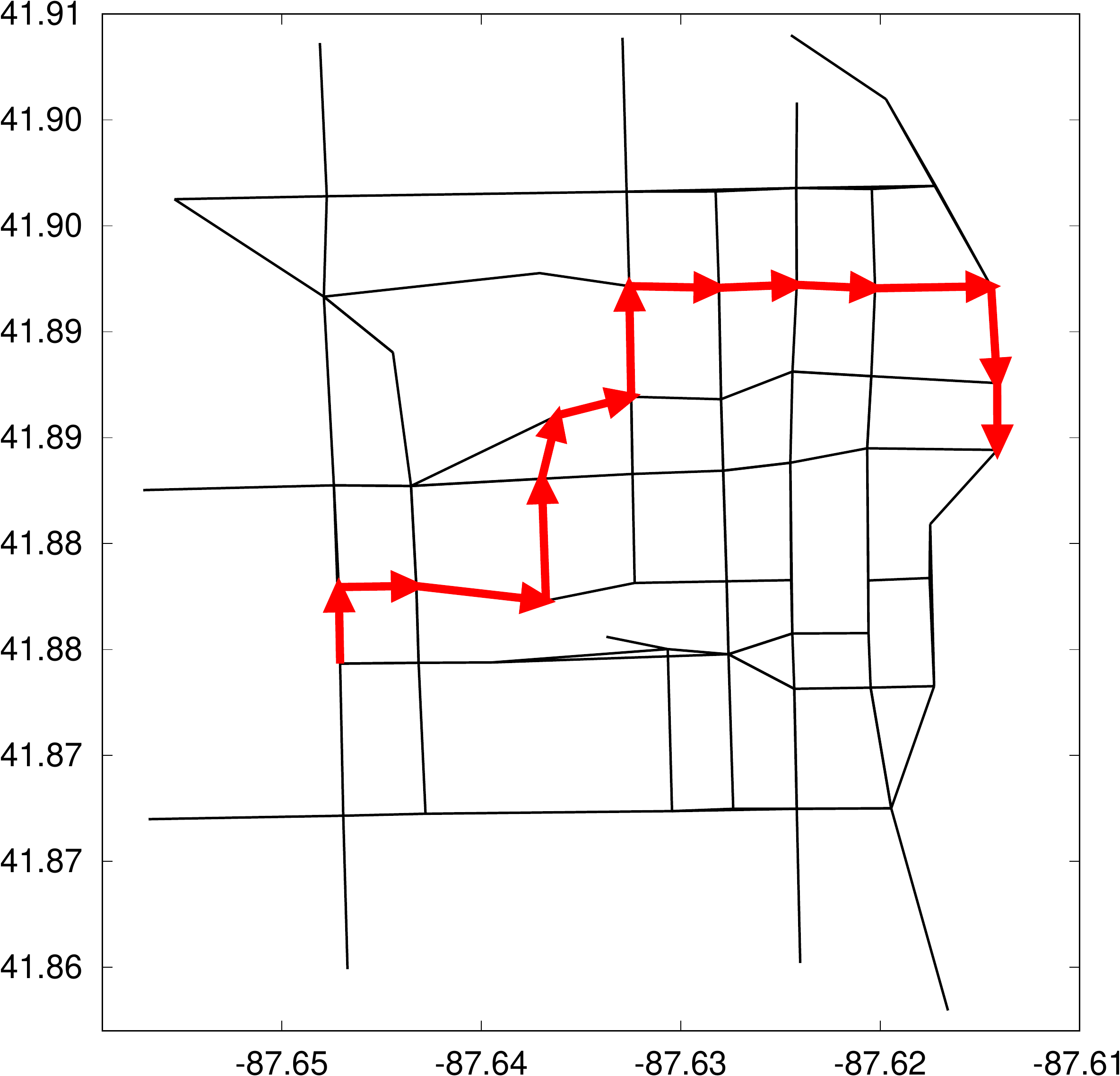}}%
\hspace{0.04\textwidth}
\subfigure[Out-of-sample path lengths 06.]{\includegraphics[width=0.36\textwidth]{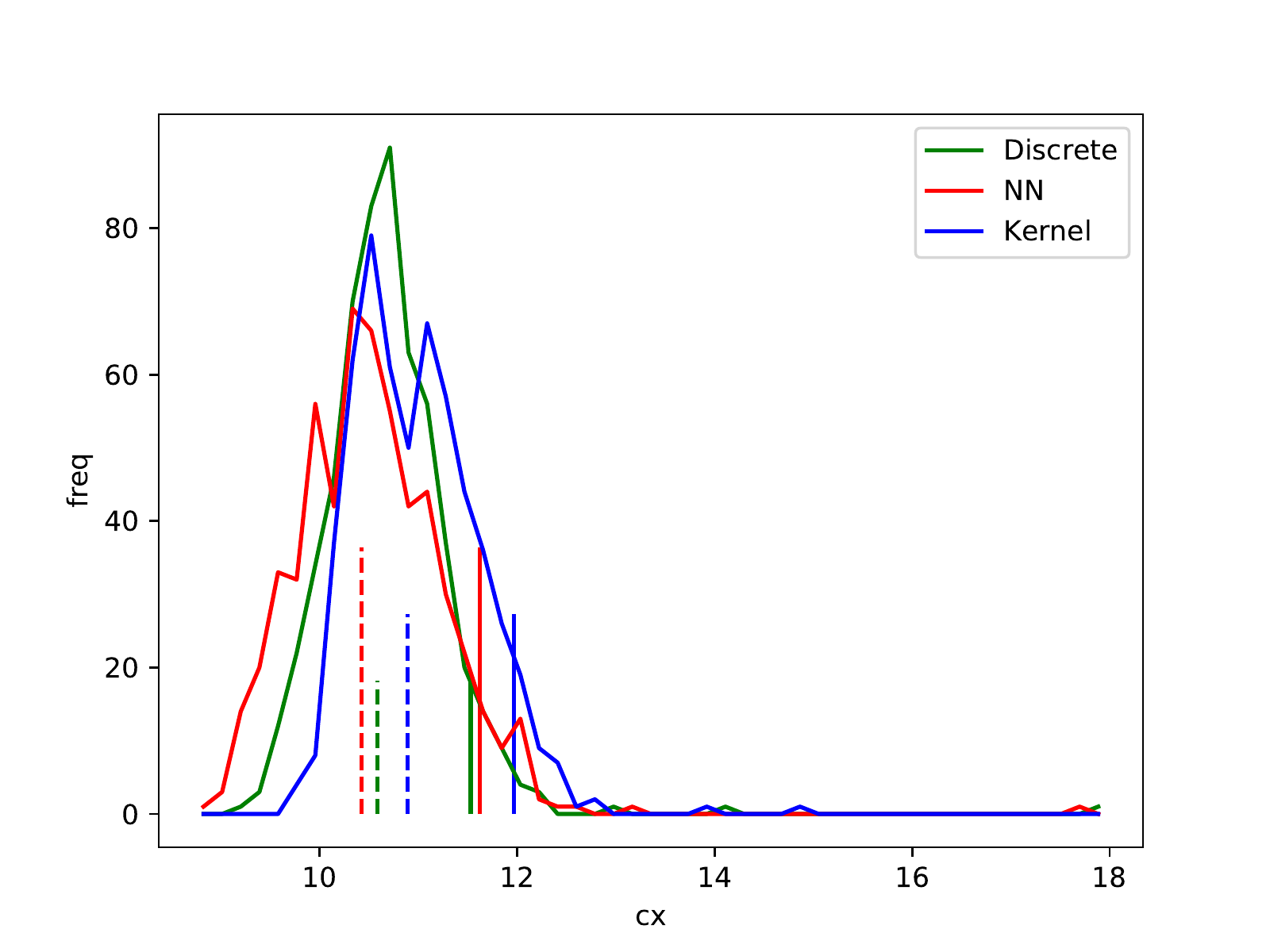}}%
\hspace{0.04\textwidth}%
\subfigure[NN path 06.]{\includegraphics[width=0.27\textwidth]{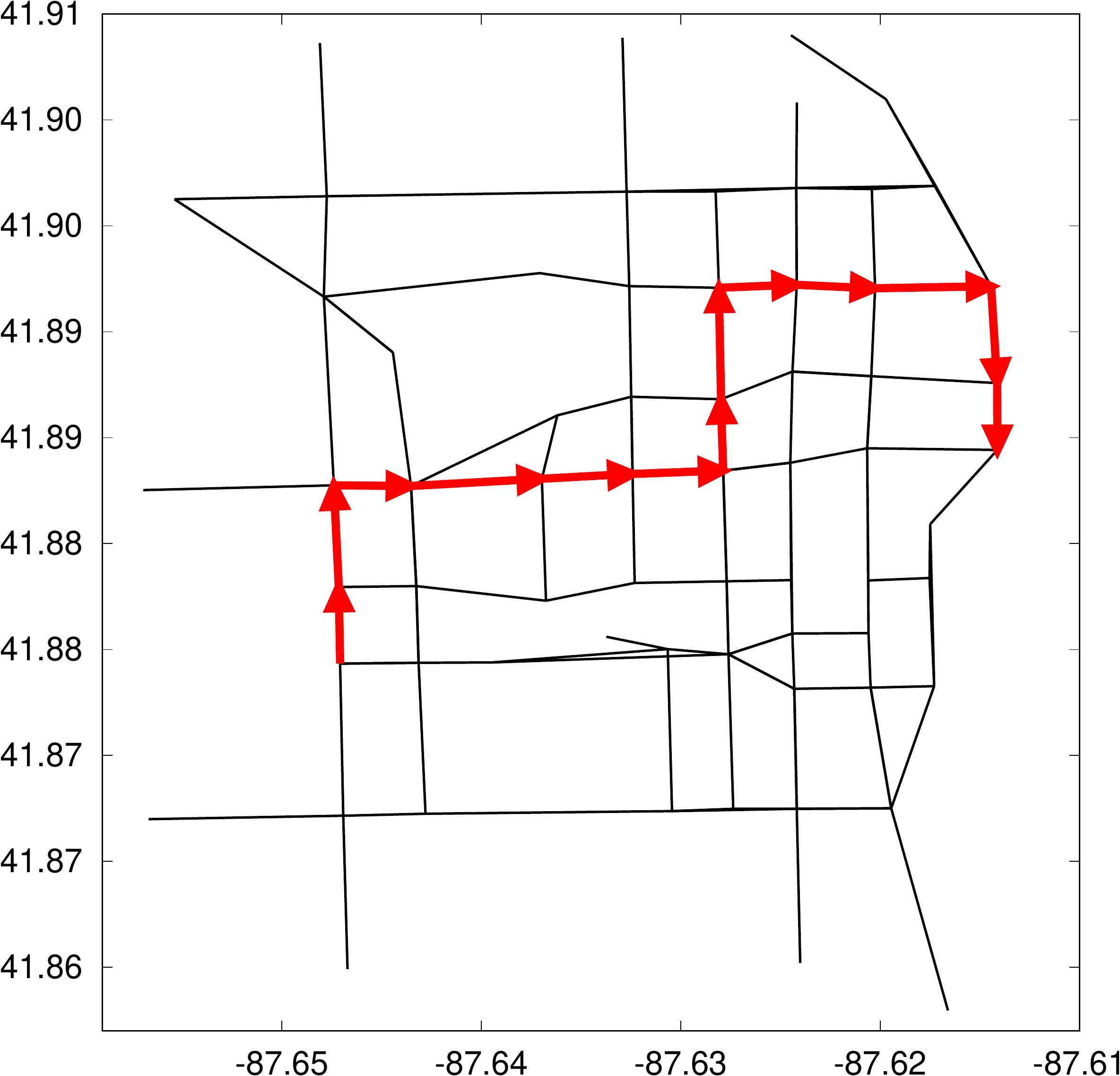}}
\subfigure[Kernel path 07.]{\includegraphics[width=0.27\textwidth]{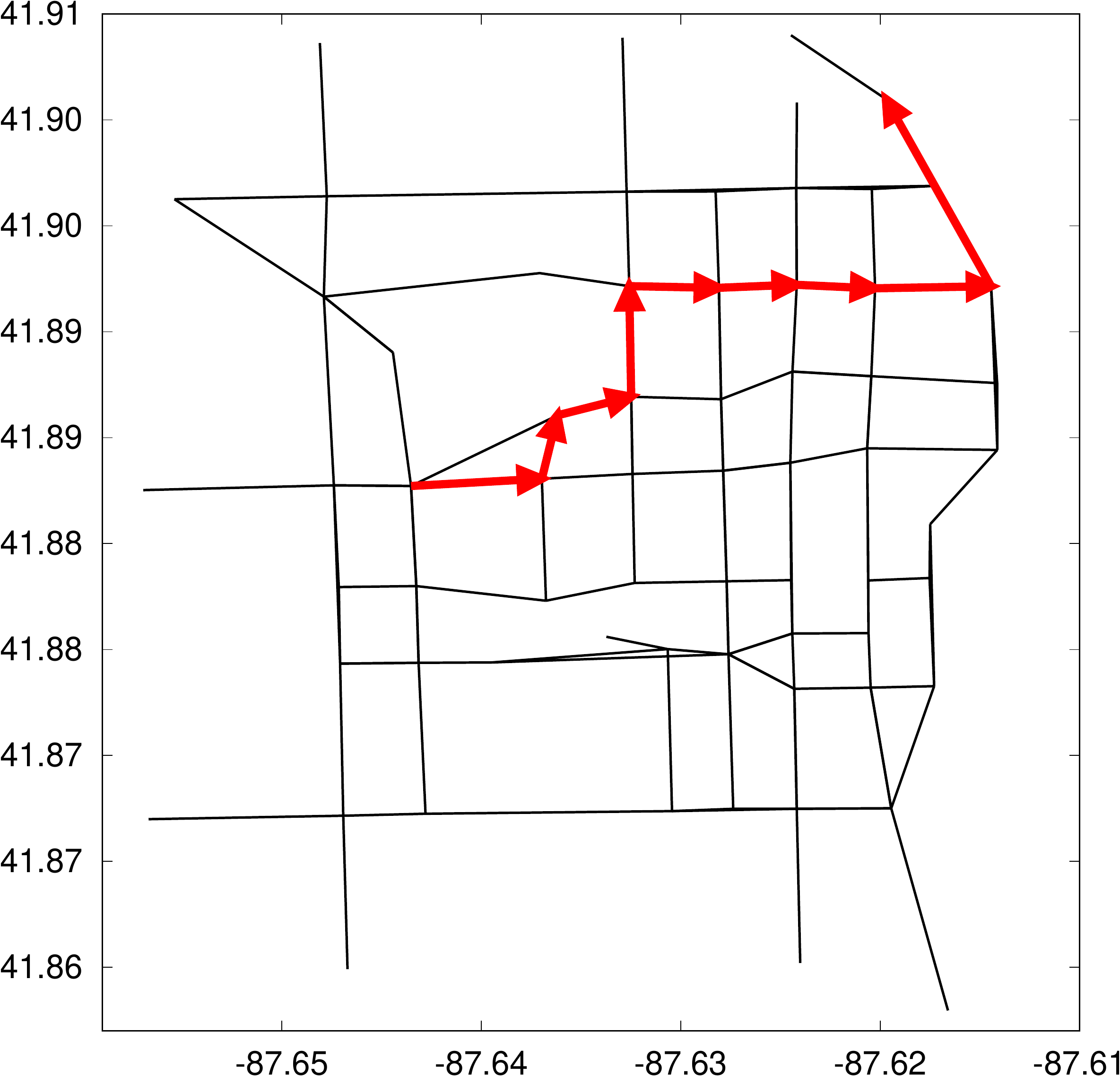}}%
\hspace{0.04\textwidth}%
\subfigure[Out-of-sample path lengths 07.]{\includegraphics[width=0.36\textwidth]{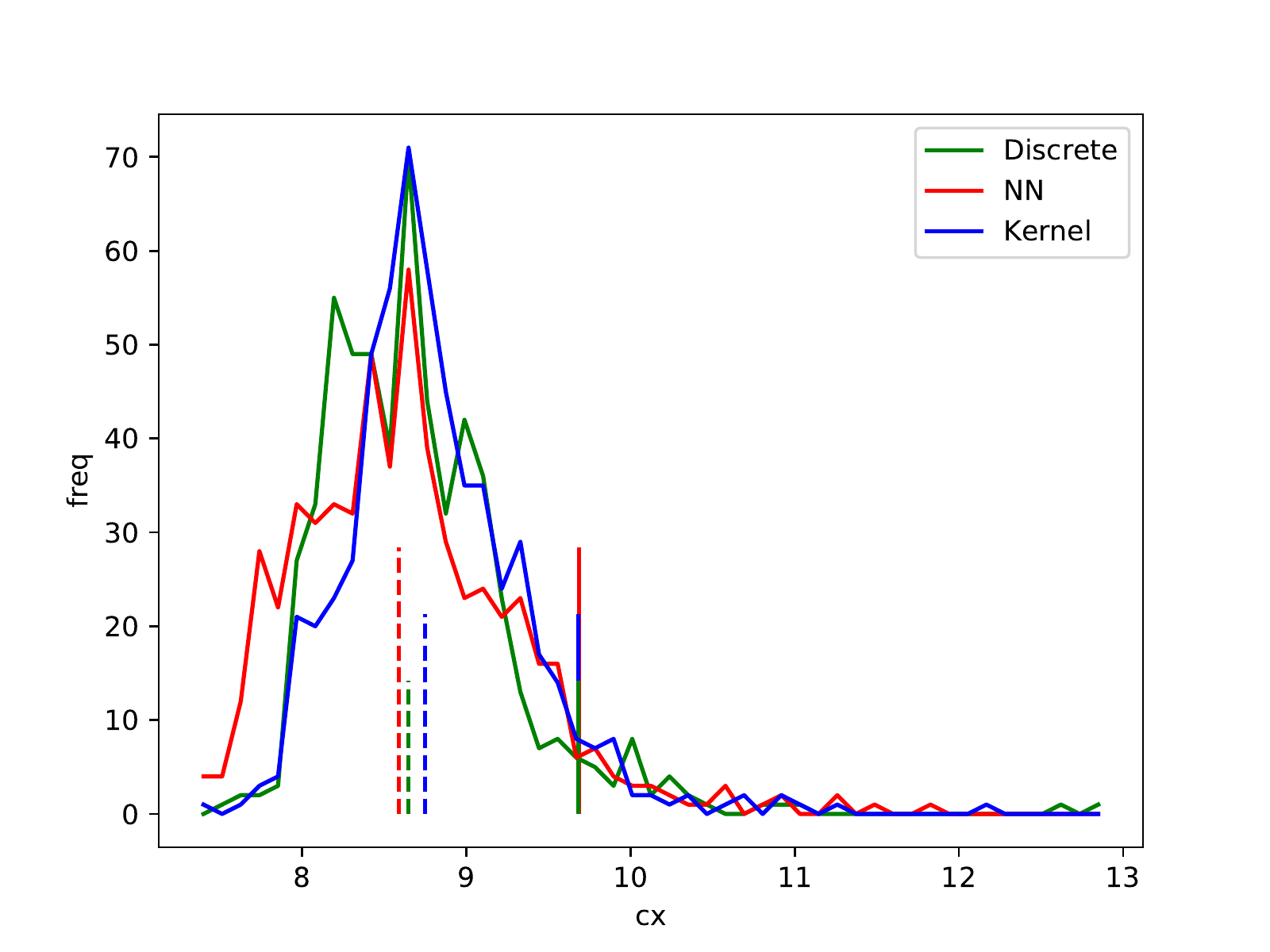}}%
\hspace{0.04\textwidth}%
\subfigure[NN path 07.]{\includegraphics[width=0.27\textwidth]{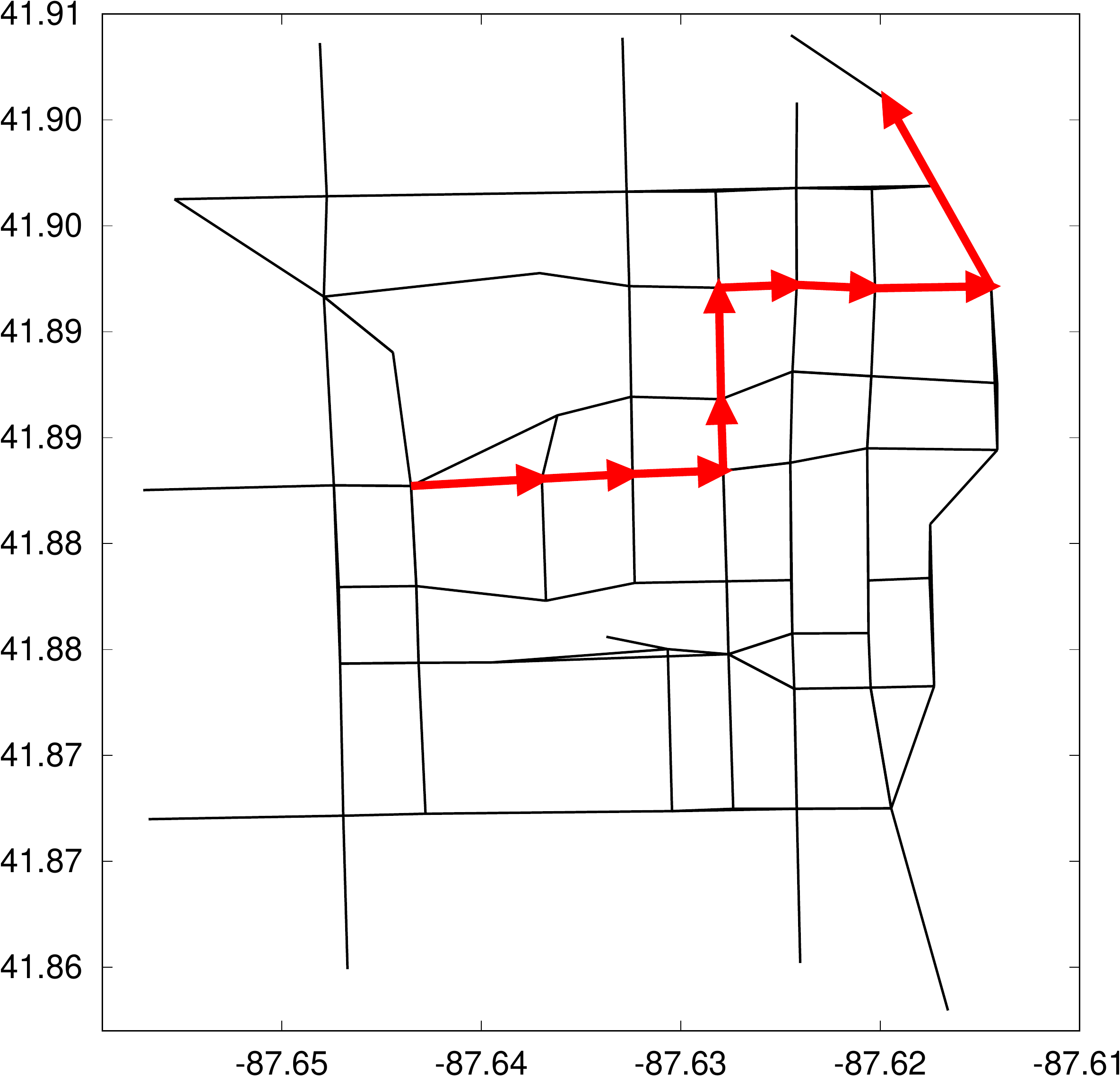}}
\caption{Comparison of paths 5-7 in experiment 3.}\label{exp3:figSP2}
\end{center}
\end{figure}

\begin{figure}[htbp]
\begin{center}
\subfigure[Kernel path 08.]{\includegraphics[width=0.27\textwidth]{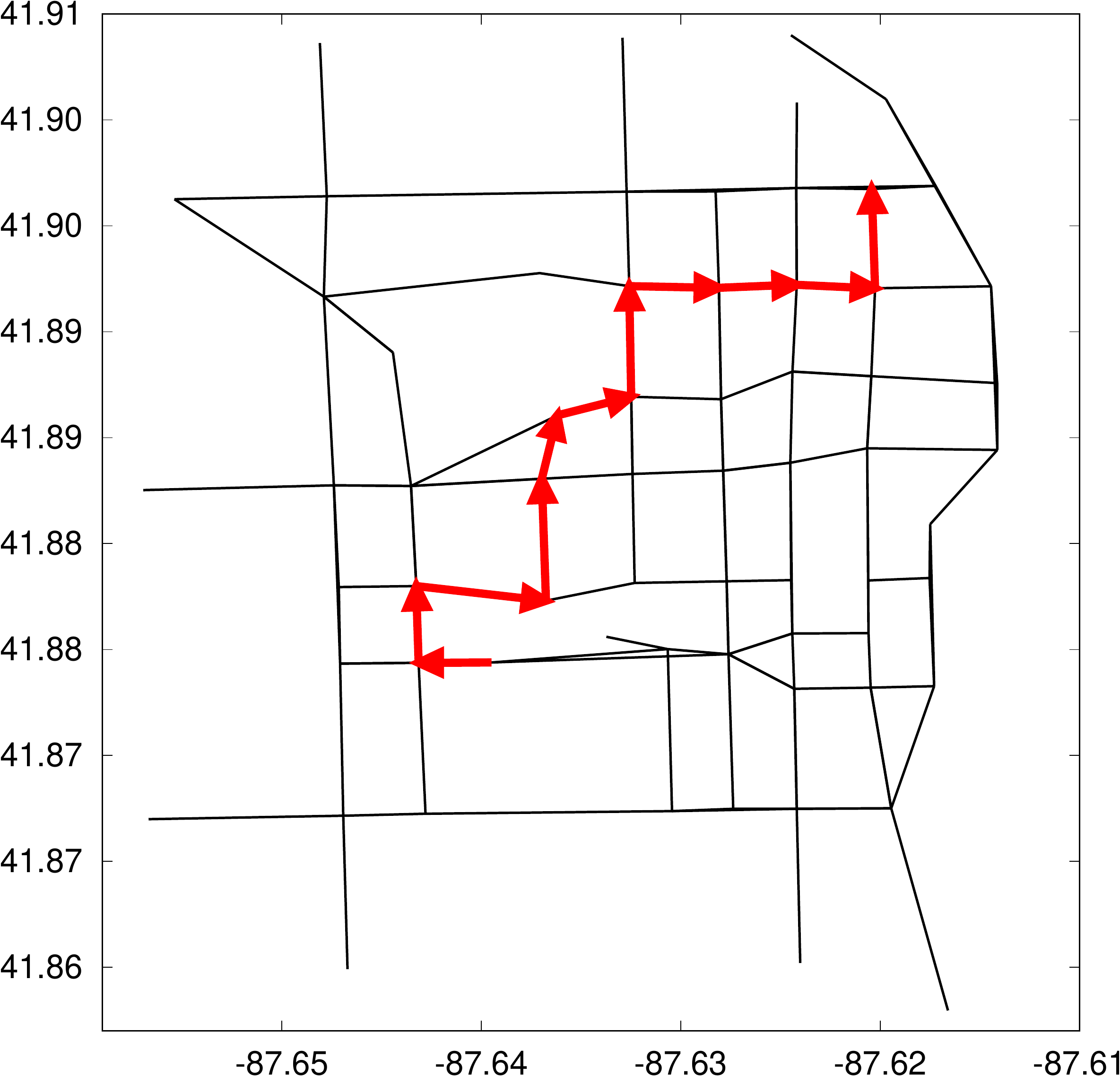}}%
\hspace{0.04\textwidth}%
\subfigure[Path lengths 08.]{\includegraphics[width=0.36\textwidth]{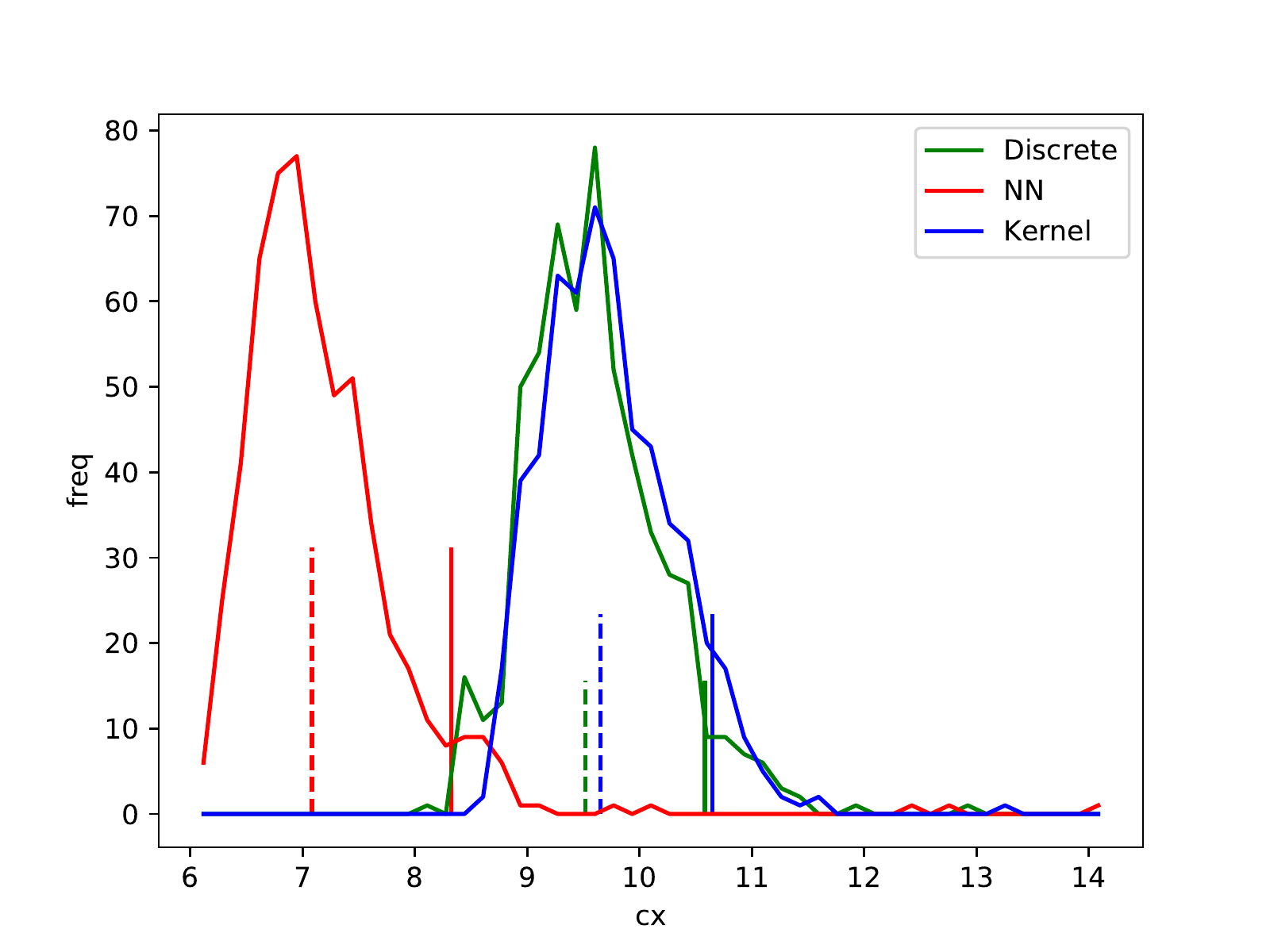}}%
\hspace{0.04\textwidth}%
\subfigure[NN path 08.]{\includegraphics[width=0.27\textwidth]{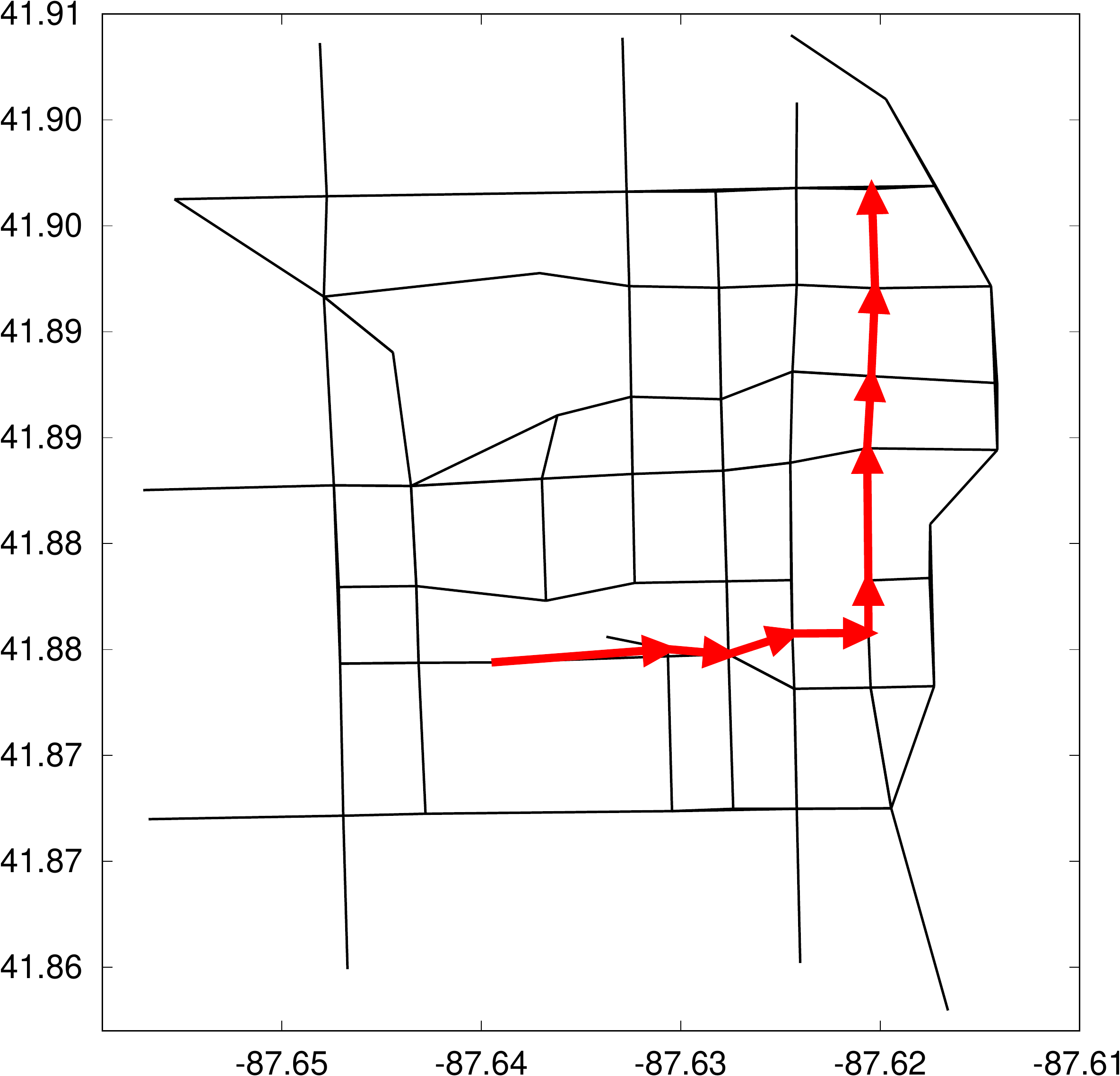}}
\subfigure[Kernel path 09.]{\includegraphics[width=0.27\textwidth]{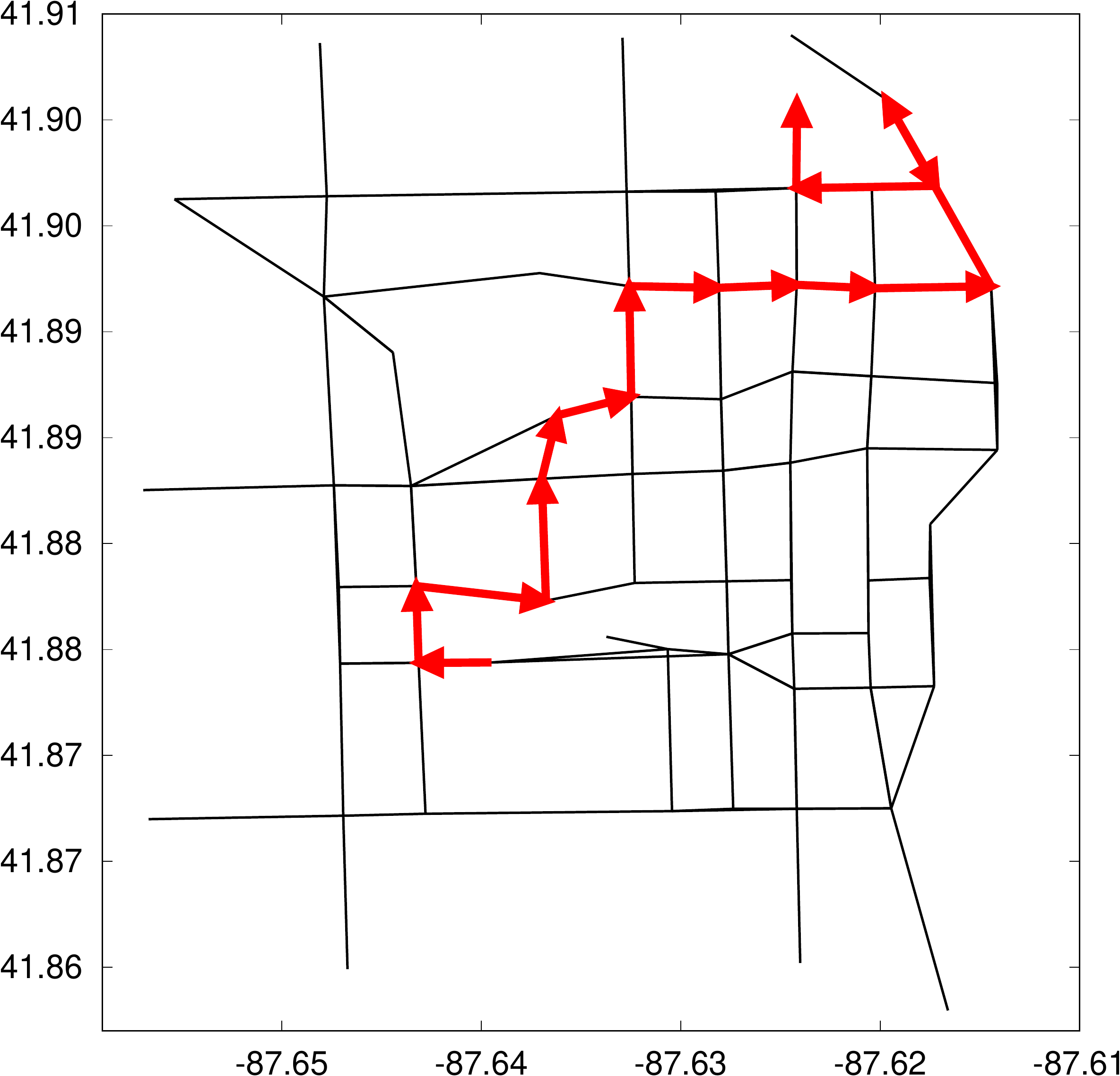}}%
\hspace{0.04\textwidth}
\subfigure[Path lengths 09.]{\includegraphics[width=0.36\textwidth]{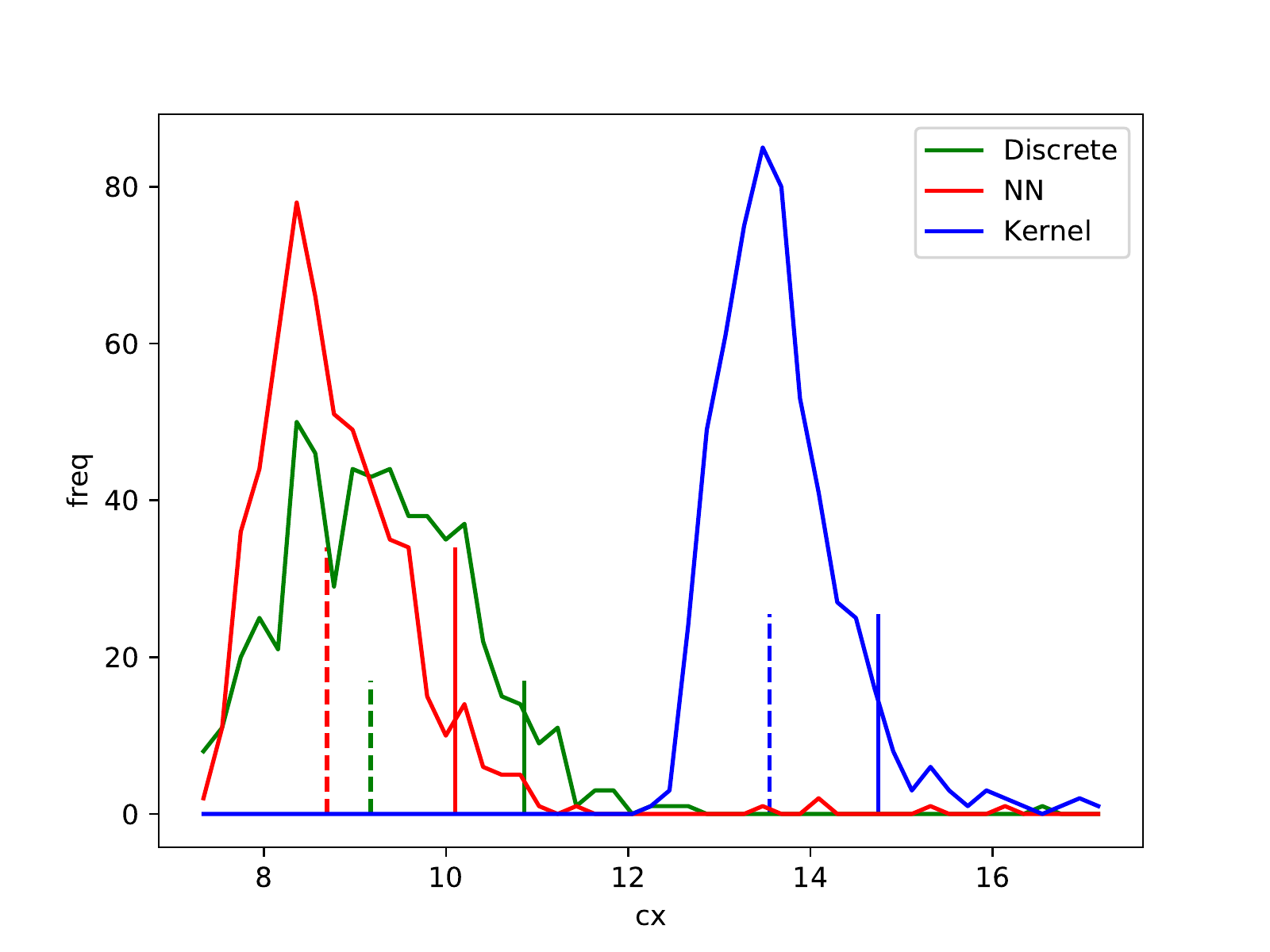}}%
\hspace{0.04\textwidth}%
\subfigure[NN path 09.]{\includegraphics[width=0.27\textwidth]{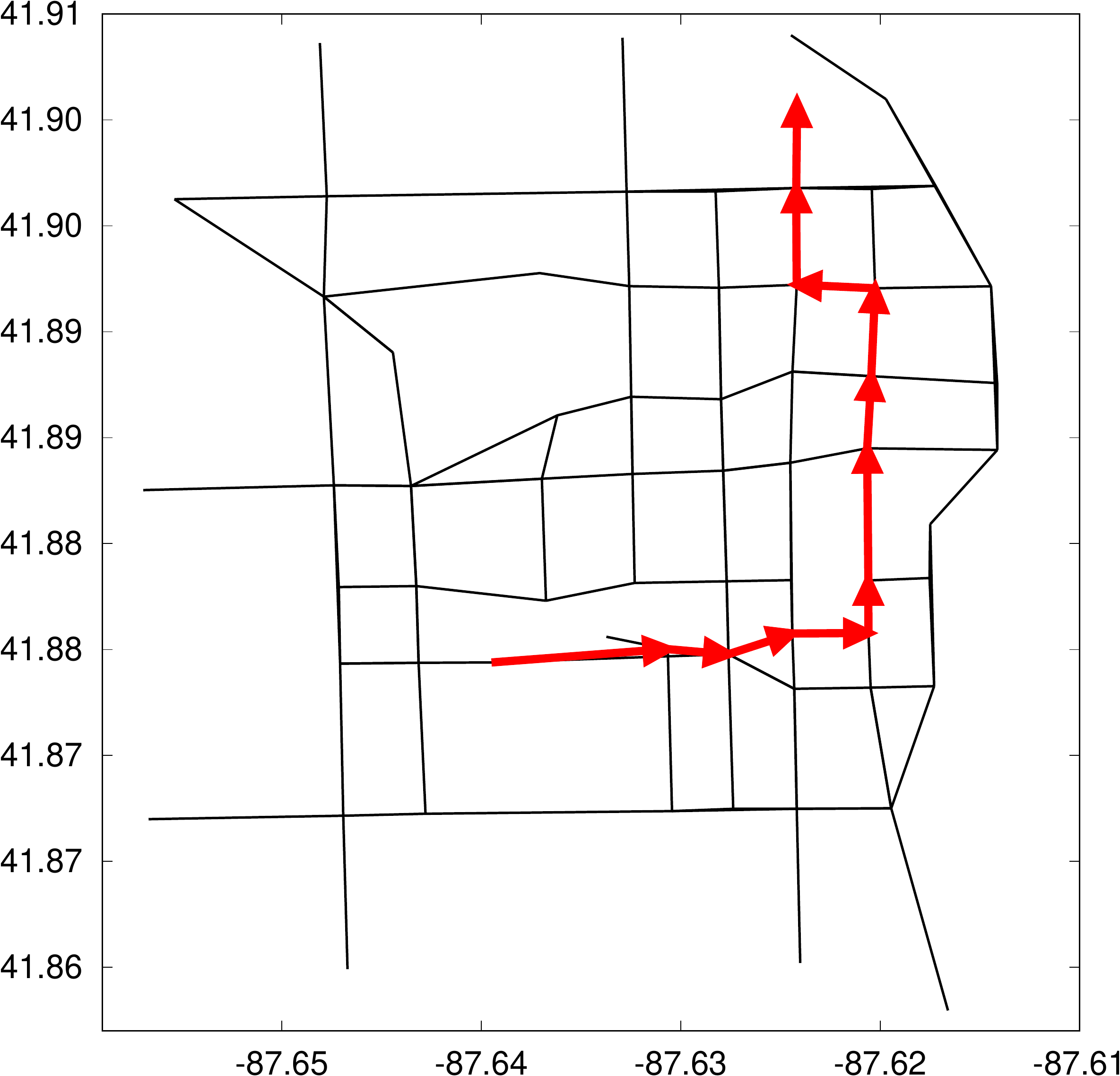}}
\subfigure[Kernel path 10.]{\includegraphics[width=0.27\textwidth]{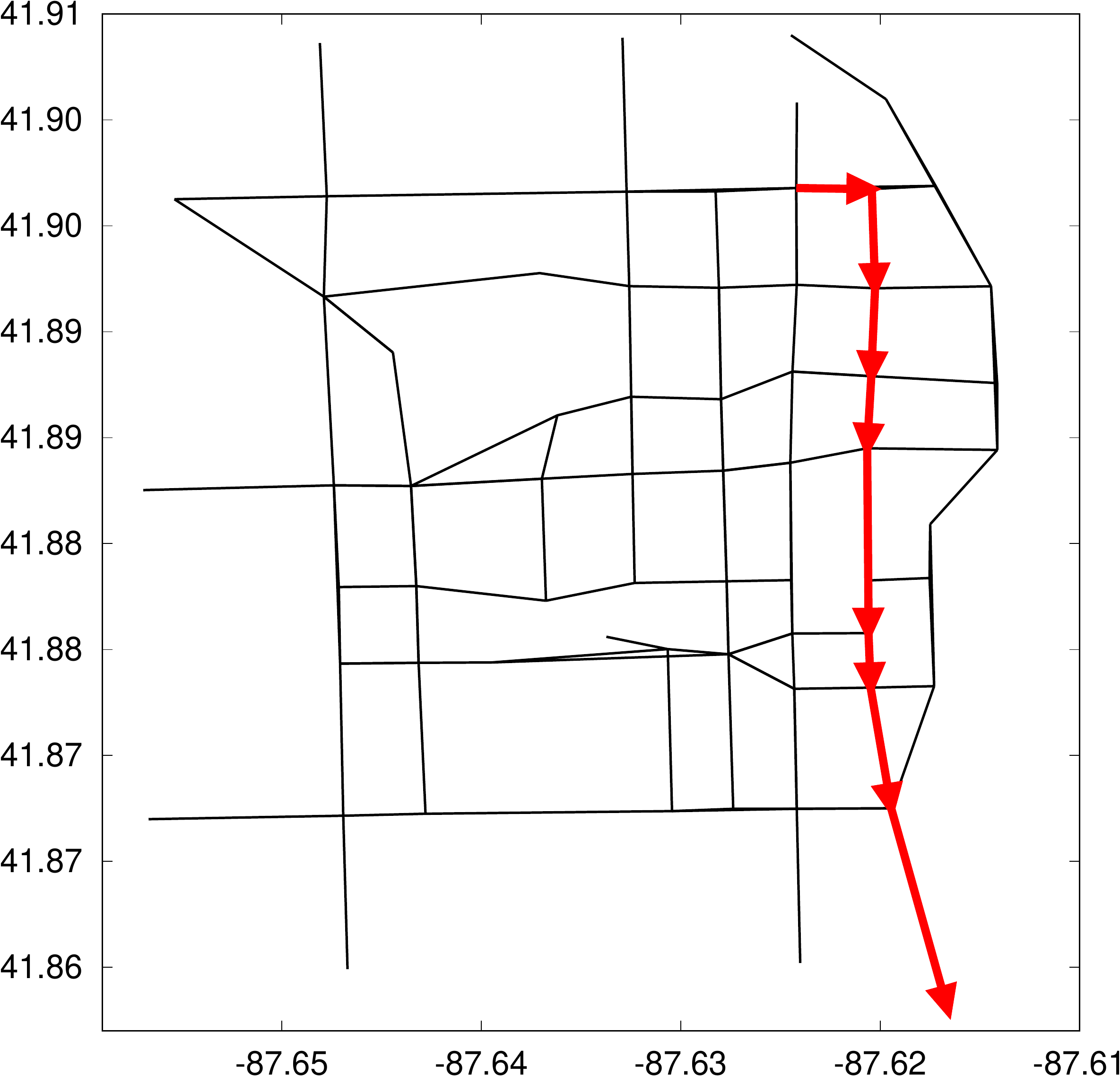}}%
\hspace{0.04\textwidth}%
\subfigure[Path lengths 10.\label{histo:10}]{\includegraphics[width=0.36\textwidth]{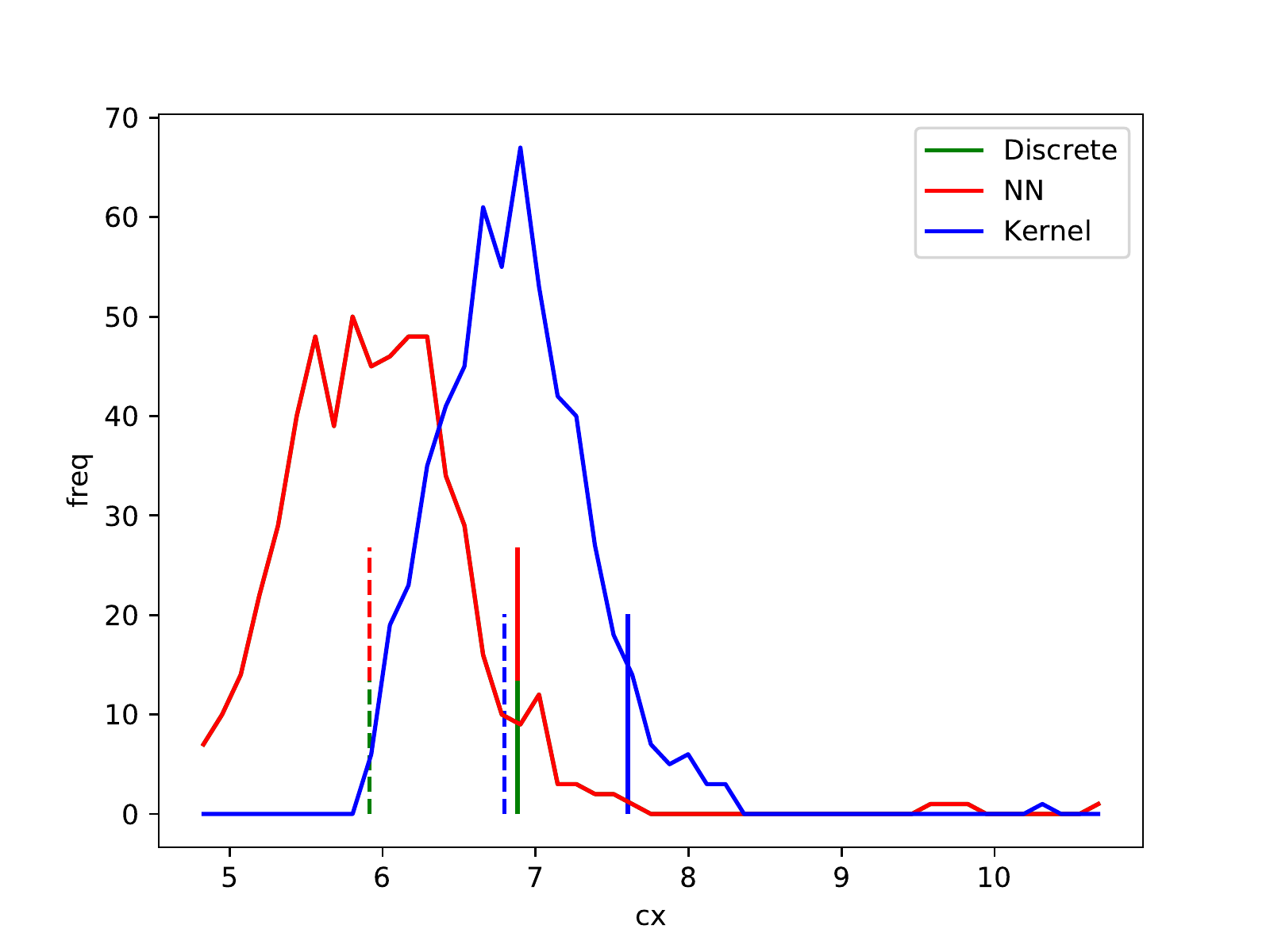}}%
\hspace{0.04\textwidth}%
\subfigure[NN path 10.]{\includegraphics[width=0.27\textwidth]{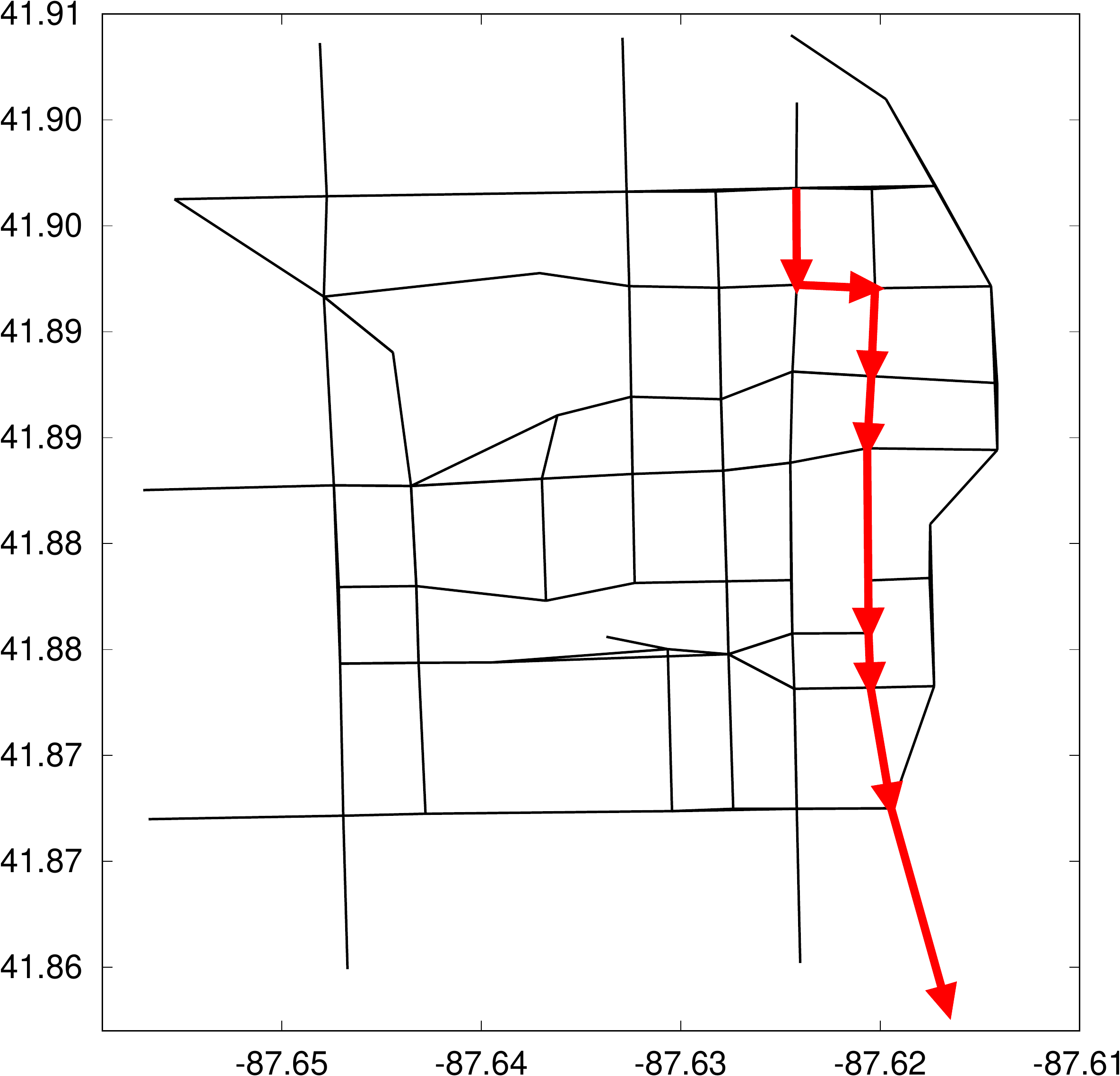}}
\caption{Comparison of paths 8-10 in experiment 3. In Figure~\ref{histo:10}, red line is on top of green line.}\label{exp3:figSP3}
\end{center}
\end{figure}

We note that classical flow constraints (inflow equals outflow for each node except source and sink) correctly model the path problem only if all costs are non-negative. Using the Kernel approach, we observed that solutions frequently were not connected and included cycles, which results from uncertainty sets that include negative costs. We modified the Kernel approach such that only non-negative arc lengths are contained in the uncertainty set. In two instances (paths 05 and 09) it can be observed that there still exists a single edge that is traversed twice in an unnecessary loop. This can only happen if the Kernel approach assumes that such costs are equal to zero. Apart from these two instances, all paths (both Kernel and NN) seem reasonable options in the street network. Note that in no case we observe the same path from Kernel and NN.

Comparing the out-of-sample performance of Kernel and NN, there are five instances with mixed results (paths 01, 04, 06, 07, and 10). For these solutions the histograms largely overlap and paths contain similar edges. In case of paths 01 and 10, Kernel shows better average and better $95\%$ quantile performance. For paths 04, 06, and 07, this comparison gives NN the advantage. For the other five paths (02, 03, 05, 08, and 09), NN has a distinct advantage, clearly outperforming Kernel.

There is one case (path 03) where Discrete clearly outperforms both Kernel and NN. Apart from this case, Discrete shows a good overall performance, but can sometimes be worse than NN, e.g., on paths 02 and 08, which highlights the benefit of generalization through the proposed machine learning methods when the data is less affected by noise.

To summarize the findings of the two experiments, we note that (i) when visualized on low-dimensional data, NN uncertainty sets reasonably capture the shape and size of uncertainty; (ii) NN uncertainty sets on random, 20-dimensional optimization problems result in significantly better solutions than what Kernel achieves, irrespective whether we consider objective or constraint uncertainty, which comes at the cost of higher computational effort; and (iii) using real-world data, NN becomes more computationally efficient, scaling better in the problem size than Kernel, but still produces solutions that clearly outperform those produced by Kernel. Finally, (iv) for relatively noise-free data, the discrete scenario approach can produce solutions of high quality that are fast to compute. 

\section{Conclusions}

To derive useful robust optimization models, it is central to have a suitable description of the uncertainty set available. The task of identifying whether a given scenario is similar to observed data or should be considered as an outlier is related to the task of describing the uncertainty set, and is a typical problem for which machine learning techniques have been proven to be highly efficient.

In this paper we combined one-class deep neural networks, to describe the uncertainty set, with robust optimization models. It turns out that the uncertainty sets created by the one-class deep neural networks are a finite union of convex sets, each set being a polyhedron intersected with a convex norm-constraint. Therefore, our constructed sets have a more complex structure than other data-driven sets. Indeed we can show that most of the classical uncertainty classes are special cases of these sets and hence the robust problems is at least as hard as the one for classical uncertainty sets. Furthermore we could show that optimizing over our sets is strongly NP-hard. Nevertheless, to solve the robust optimization problem, it is necessary that we can optimize over the set of scenarios that are classified as being representative for the historic data by the neural network. We show that this is possible by formulating the adversarial problem as a convex quadratic mixed-integer program. By further decomposing this problem using the historic data, it is even possible to remove the binary variables from the program and solve a sequence of continuous problems instead.

We tested our method in two experiments, where we compare against a kernel-based support vector clustering method that is most similar to our approach. 
Throughout our experiments, we found encouraging results, observing that the method proposed in this paper often finds better robust solutions (with respect to both objective value and feasibility) than when applying the previous method or even discrete uncertainty sets containing the historic data. At the same time, our method is easy to apply with suitable training and optimization code being readily available. A drawback is that solution times on randomly generated data are higher than for the comparison method; here, the development of heuristic solution methods may be beneficial.

Since in the classical linear robust optimization regime the uncertainty set can always be replaced by its convex hull, the non-convexity (and even non-connectedness) of our uncertainty set is implicitly reverted and does not have an impact on the performance. Considering two-stage robust optimization problems, replacing the uncertainty set by its convex hull is not possible anymore, and therefore extending our method to two-stage robust optimization problems in the future can lead to larger improvements compared to other uncertainty sets.

\newcommand{\etalchar}[1]{$^{#1}$}

\end{document}